\newtheorem{theorem}{Theorem}[section]
\newtheorem{lemma}[theorem]{Lemma}
\newtheorem{proposition}[theorem]{Proposition}
\newtheorem{corollary}[theorem]{Corollary}
\newtheorem{remark}[theorem]{Remark}
\newcommand{\mc}[1]{{\mathcal #1}}
\newcommand{\mf}[1]{{\mathfrak #1}}
\newcommand{\mb}[1]{{\mathbf #1}}
\newcommand{\bb}[1]{{\mathbb #1}}
\newcommand{\<}{\langle}
\renewcommand{\>}{\rangle}
\newcommand{\x}{\!\otimes}
\begin{document}

\title[Homogenization of generalized second-order elliptic difference operators]{Homogenization of generalized second-order elliptic difference operators}

\author[A.B. Simas]{Alexandre B. Simas}
\address{A.B. Simas
\hfill\break\indent
Departamento de Matemática, Universidade Federal da Paraíba \hfill\break\indent
Cidade Universitária - Campus I, 58051-970, João Pessoa - PB, Brasil}
\email{alexandre@mat.ufpb.br}
\author[F.J. Valentim]{Fábio J. Valentim}
\address{F.J. Valentim
\hfill\break\indent
Departamento de Matemática, Universidade Federal do Espírito Santo\hfill\break\indent
Av. Fernando Ferrari, 514, Goiabeiras, 29075-910, Vitória - ES, Brasil}
\email{fabio.valentim@ufes.br}

\thanks{Research supported by CNPq and FAPES}

\noindent\keywords{Homogenization, Difference operators, Differential operators}

\subjclass[2000]{46E35, 35J15}

\begin{abstract}
Fix a function $W(x_1,\ldots,x_d) = \sum_{k=1}^d W_k(x_k)$ where each $W_k: \bb R \to \bb R$ is a strictly increasing right continuous function with left limits.
For a diagonal matrix function $A$, let $\nabla A \nabla_W = \sum_{k=1}^d \partial_{x_k}(a_k\partial_{W_k})$ be a generalized second-order differential operator.
We are interested in studying the homogenization of generalized second-order difference operators, that is, we are interested in the
convergence of the solution of the equation
$$\lambda u_N - \nabla^N A^N \nabla_W^N u_N = f^N$$
to the solution of the equation
$$\lambda u - \nabla A \nabla_W u = f,$$
where the superscript $N$ stands for some sort of discretization. In the continuous case we study the problem in the context of $W$-Sobolev spaces,
whereas in the discrete case  the theory is developed here. The main result is a homogenization result.  Under minor assumptions regarding weak convergence and ellipticity of these matrices $A^N$, we show that  every such sequence admits a homogenization. 
We provide two examples of matrix functions verifying these assumptions:  The first one consists to fix a matrix function $A$ with some minor regularity, and take $A^N$ to be a convenient discretization. The second one consists on the case where $A^N$ represents a random environment associated to an ergodic group, which we then show that the homogenized matrix $A$ does not depend on the realization $\omega$ of the environment. Finally, we apply this result in probability theory. More precisely, we prove a hydrodynamic limit result for some gradient processes.
\end{abstract}

\maketitle

\section{Introduction}
In the '50s William Feller introduced a more general concept of differential operators, that is, operators of the type
$(d/dW)(d/dV)$ where, typically, $W$ and $V$ are strictly increasing functions with $V$ (but not necessarily $W$) being continuous.
In this paper we are interested in the formal adjoint of $(d/dW)(d/dV)$, which is simply $(d/dV)(d/dW)$, in the case $V(x)=x$ is the identity function.
For more details on Feller's operators, we refer the reader to \cite{f1,f2,m}.

Recently, the formal adjoint operator $(d/dx)(d/dW)$ and some non-linear versions were obtained as scaling limits of interacting
particle systems in inhomogeneous media. They may model diffusions with permeable membranes at the points of the discontinuities of $W$, see \cite{f, TC, jl, FSV, v} for further details.
In \cite{v}, for instance, the author introduces an extension of the formal adjoint operator to higher dimensions and provides some results regarding this extension.
Informally,  fix a strictly increasing right continuous functions with left limits and periodic increments, $W_k:\bb R \to \bb R$, $k=1,\ldots,d$, and let $W(x) = \sum_{k=1}^d W_k(x_k)$ for $x\in\bb R^d$. The generalized Laplacian operator is given by $\mc L_W = \sum_{k=1}^d\partial_{x_k}\partial_{W_k}$. Note that $W_k(x) = x$, $k=1,\ldots, d$, corresponds to the classic Laplacian operator. Furthermore, 
In \cite{SV} the authors studied the space of functions $f$ having \textit{weak generalized gradients} $\nabla_W f = (\partial_{W_1} f,\ldots,\partial_{W_d} f)$, which they called the $W$\textit{-Sobolev space.} Several properties, that are analogous to those in classical Sobolev spaces, are also obtained. Recently, in \cite{SV3}, $W$-Sobolev spaces of higher order were defined and results on elliptic regularity were obtained.

Our goal in this paper is to approximate a solution of a certain continuous partial differential equation driven by a generalization of the laplacian
by considering a sequence of discrete functions given by solutions of discrete versions of such equations.

A first contribution of the current paper, presented in Section \ref{sobdis}, is to build a theory in discrete setup that is analogous to the theory
introduced in \cite{SV}. In particular, we introduce the $W$-interpolation and present some results in order to get connections between the discrete and continuous Sobolev spaces.

The main contribution are the results related to homogenization presented in Section \ref{sec5}. Informally speaking, 
we considered discrete problems of the form $\lambda u_N - \nabla^N A^N\nabla_W^Nu_N$ where $A^N$ are sequence of diagonal matrices functions, then we define a  matrix $A$ as being a \textit{homogenization} of the sequence of matrices $A^N$ if some convergences, in appropriate spaces hold, and 
also if the limit is related a continuous problem associated the matrix $A$. 
The main result of this paper, namely Theorem \ref{homoge}, can be summarized in following: under minor assumptions regarding weak convergence and ellipticity on these matrices $A^N$, we show that  every such sequence admits homogenization. 

This result can be incorporated in several situations. We provide two examples that cover many of them. The first consists in fixing a matrix function $A$ with some minor regularity, and let $A^N$ be a convenient discretization. Theorem \ref{homoge} ensures the convergence 
of the solutions $u_N$ to $u_0$, which is associated the problem with $A$. The other example consists of an example of stochastic homogenization. Indeed, let $A^N$ be a sequence of random matrix functions satisfying an ergodicity condition. Then, we show that, this sequence admits homogenization, and that the homogenized matrix
 $A$ does not depend on the realization $\omega$ of the random environment. The focus of this approach is to study the 
asymptotic behavior of the coefficients for a family of random difference schemes, whose coefficients can be obtained by 
the discretization of random high-contrast lattice structures. In this sense, we want to extend the theory of 
homogenization of random difference operators developed in \cite{pr}, which was also tackled in seminal paper by Kozlov \cite{kozlov}.
We also want to generalize its main Theorem (Theorem 2.16) to the context in which we have weak generalized derivatives 
in the sense of $W$-Sobolev spaces.

Finally, as an application of this theory, we prove a hydrodynamic limit result, which is, per se, an interesting result, since it extends
the main result obtained by \cite{v}. More precisely, one must observe that this is a non-trivial extension, since the fact that
the matrix $A$ can be non-constant creates a large difficulty to the method presented there. More precisely, the method consisted in 
studying the spectral decomposition, and the resolvent operator of the operator $\mathcal{L}_W = \sum_{k=1}^d \partial_{x_k}\partial_{W_k}.$
Thus, in our setup, that allows the matrix function $A$ to be non-constant, would force one to look for spectral decomposition and the resolvent
operator of the operator $\sum_{k=1}^d \partial_{x_k}(a_k\partial_{W_k})$.

The remaining of the article is organized as follows: in Section \ref{wsob} we provide a brief review on $W$-Sobolev spaces; in Section \ref{ns}, we define a new space of test functions needed in subsequent sections; in Section \ref{wdiscrete} we introduce a discrete analogous to the continuous $W$-Sobolev spaces, and then, interpolation and projection results connecting the discrete and continuous versions; in Section \ref{sec5} we provide main results of this article; in Section \ref{aplicacao-limite} we apply the results of this article to prove a hydrodynamic limit for gradient processes with conductances in random environments. Finally, we provide an Appendix with a proof to an auxiliary result.

\section{$W$-Sobolev space}\label{wsob}
In this Section we recall some notation and results of  \cite{TC, SV, v}.
Denote by $\bb T^d = ({\bb R}/{\bb Z})^d = [0, 1)^d$ the $d$-dimensional torus and fix a function $W: \bb R^d \to \bb R$ such that

\begin{equation}
\label{w}
W(x_1,\ldots,x_d) = \sum^d_{k=1}W_k(x_k),
\end{equation}
where each $W_k: \bb R \to \bb R$ is a \emph{strictly increasing} right continuous function with left
limits (c\`adl\`ag), with periodic increments, in the sense that for all $u\in \bb R$, $W_k(u+1) - W_k(u) = W_k(1) - W_k(0).$

 Define the generalized derivative $\partial_{W_k}$ of a function $f:\bb T^d \to \bb R$ by
\begin{equation}
\label{f004}
\partial_{W_k} f (x_1,\!\ldots\!,x_k,\ldots, x_d) = \lim_{\epsilon\rightarrow 0}
\frac{f(x_1,\!\ldots\!,x_k +\epsilon,\ldots, x_d)
-f(x_1,\!\ldots\!,x_k,\!\ldots\!, x_d)}{W_k(x_k+\epsilon) -W_k(x_k)}\;,
\end{equation}
if the above limit exists. Denote the generalized gradient of $f$ by
$\nabla_W f = \left(\partial_{W_1}f,\ldots,\partial_{W_d}f\right),$
if the generalized derivatives $\partial_{W_k}$ exist for all $k=1,\ldots,d$.

Let us remember the definition of the space $H_{1,W}(\bb T^d)$, called $W$-Sobolev space.  We denote by $\<\cdot,
\cdot\>$ the inner product of the Hilbert space $L^2(\bb T^d)$ and by
$\Vert \cdot \Vert$ its norm.
Let $L^2_{x^k\x W_k}(\bb T^d)$ be the Hilbert space of
measurable functions $f: \bb T^d\to\bb R$ such that
\begin{equation*}
\int_{\bb T^d}f (x)^2\, d(x^k\x W_k)  \;<\; \infty,
\end{equation*}
where $d(x^k\x W_k)$  represents the product measure in $\bb T^d$
obtained from Lesbegue's measure in $\bb T^{d-1}$ and the measure induced by $W_k$ in $\bb T$:
$$d(x^k\x W_k)\;=\;dx_1\cdots dx_{k-1}\;dW_k\; dx_{k+1}\cdots dx_d.$$
Denote by $\< \cdot, \cdot \>_{x^k\x W_k}$ the inner product of $L^2_{x^k\x W_k}(\bb T^d)$:
\begin{equation*}
\< f,g \>_{x^k\x W_k} \;=\; \int_{\bb T^d}  f(x)
\, g(x)\; d(x^k\x W_k)\, ,
\end{equation*}
 and by $\|\cdot\|_{x^k\x W_k}$ the norm induced by this inner product.

The set $\mc A_{W_k}$ of the eigenvectors of $\frac d{dx}\frac d{dW_k}$ forms a complete orthonormal system in $L^2(\bb T)$, (see \cite{TC}). 
Let
\begin{equation*}
 \mc A_W\;=\;\{f: \bb T^d \rightarrow \bb R;f(x_1,\ldots , x_d)=
\prod^{d}_{k=1}{ f_k(x_k)}, f_k \in  \mc A_{W_k} \},
\end{equation*}
and denote by $\bb D_W:=span(\mc A_W)$. Define the operator $\mc L_W:\bb D_W:=span(\mc A_W) \to L^2(\bb T^d)$ as follows: 
for $f =\prod^d_{k=1}{f_k}\in \mc A_W$,
\begin{equation}
\label{eq31}
\mc L_W(f)(x_1,\ldots x_d)=\sum^{d}_{k =1} \prod^{d}_{j=1, j\neq k}
{f_j(x_j)}\mc L_{W_k}f_k(x_k),
\end{equation}
and extend to $\bb D_W$ by linearity.
In particular,   $\bb D_W$ is dense in $L^2(\bb T^d)$ and,
the set $\mc A_W$ forms a complete, orthonormal, countable system
of eigenvectors for the operator $\mc L_W$. More details can be found in \cite{v}.

Denote by $L^2_{x^k\x W_k, 0}(\bb T^d)$ be the closed subspace of $L^2_{x^k\x W_k}(\bb T^d)$ consisting of the functions $f$ that have zero mean with respect to the measure $d(x^k\x W_k)$:
$$\int_{\bb T^d} f\;  d(x^k\x W_k) = 0.$$

The function $g\in L^2(\bb T^d)$  has $W$-\textit{generalized weak derivative} if for each $k=1,\ldots,d$ there exists a function $G_k\in L^2_{x^k\x W_k,0}(\bb T^d)$ satisfying the following integral by parts identity
\begin{equation}\label{eq22}
\int_{\bb T^d}\big(\partial_{x_k}\partial_{W_K}f\big)\;g\;dx\; =\; -\;\int_{\bb T^d}(\partial_{W_k}f)\;G_kd(x^k\x W_k),
\end{equation}
for every function $f\in \bb D_W$.

Denote by ${H}_{1,W}(\bb T^d)$,  the $W$-Sobolev space,   the set of functions in $L^2(\bb T^d)$ having $W$-generalized weak derivative. Note that $\bb D_W\subset H_{1,W}(\bb T^d)$. Moreover, if $g\in\bb D_W$ then $G_k = \partial_{W_k} g$. For this reason for each function $g\in H_{1,W}$ we denote $G_k$ simply by $\partial_{W_k} g$, and we call it the $k$-th \emph{generalized weak derivative} of the function $g$ with respect to $W$.

In \cite{SV} it is shown that $G_k$ is unique and the set ${H}_{1,W}(\bb T^d)$ is a Hilbert space with respect to the inner product
\begin{equation*}\label{sobolevhilbert}
\<f,g\>_{1,W}\;=\; \<f,g\> + \sum_{k=1}^d\int_{\bb T^d}(\partial_{W_k}f)(\partial_{W_k}g)\;d(x^k\x W_k).
\end{equation*}

Furthermore,  let $H^{-1}_W(\bb T^d)$ be the dual space to $H_{1,W}(\bb T^d)$, that is, $H^{-1}_W(\bb T^d)$ is the set of bounded linear functionals on $H_{1,W}(\bb T^d)$. The following characterization of $H^{-1}_W(\bb T^d)$ can be found in \cite{SV}:
\begin{lemma}
\label{caracterization}
$f\in H^{-1}_{W}(\bb T^d)$ if and only if there exist functions 
$f_0\in L^2(\bb T^d),$ and $f_k\in L^2_{x^k\x W_k,0}(\bb T^d)$, such that
\begin{equation}\label{dual}
f = f_0 - \sum_{k=1}^d\partial_{x_k}f_k,
\end{equation}
in the sense that for $v\in H_{1,W}(\bb T^d)$
$$f(v):=  \int_{\bb T^d} f_0 v dx + \sum_{k=1}^d \int_{\bb T^d} f_k (\partial_{W_k}v) d(x^k\x W_k).$$
Furthermore,
$$\|f\|_{H^{-1}_W} = \inf\left\{\left(\int_{\bb T^d}|f_0|^2 dx + \sum_{k=1}^d\int_{\bb T^d}|f_k|^2 d(x^k\x W_k) \right)^{1/2} ;\quad f\hbox{~satisfies \eqref{dual}}\right\}.$$
\end{lemma}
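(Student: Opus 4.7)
The plan is to prove the two directions separately and then establish the norm identity, following the standard recipe that proves the analogous characterization of $H^{-1}(\bb T^d)$ in classical Sobolev theory, adapted to the $W$-generalized setup.

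First, for the easy direction, suppose $f$ has the form \eqref{dual} with $f_0 \in L^2(\bb T^d)$ and $f_k \in L^2_{x^k\x W_k,0}(\bb T^d)$. Then for any $v \in H_{1,W}(\bb T^d)$, Cauchy--Schwarz gives
\begin{equation*}
|f(v)| \;\le\; \|f_0\|\,\|v\| + \sum_{k=1}^d \|f_k\|_{x^k\x W_k}\,\|\partial_{W_k}v\|_{x^k\x W_k} \;\le\; C\,\|v\|_{1,W},
\end{equation*}
where $C = \bigl(\|f_0\|^2 + \sum_k \|f_k\|_{x^k\x W_k}^2\bigr)^{1/2}$; hence $f \in H^{-1}_W(\bb T^d)$ and $\|f\|_{H^{-1}_W} \le C$. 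Taking the infimum over all admissible representations gives one inequality for the norm identity.

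For the converse, let $f \in H^{-1}_W(\bb T^d)$. Consider the product Hilbert space $\mc H := L^2(\bb T^d) \times \prod_{k=1}^d L^2_{x^k\x W_k,0}(\bb T^d)$ with the natural inner product, and the map $T: H_{1,W}(\bb T^d) \to \mc H$ defined by $T v = (v, \partial_{W_1}v, \ldots, \partial_{W_d}v)$. By the definition of the $\<\cdot,\cdot\>_{1,W}$ inner product, $T$ is a linear isometry; in particular its image $T(H_{1,W})$ is a closed subspace of $\mc H$. The functional $\tilde f := f \circ T^{-1}$ is well-defined and bounded on $T(H_{1,W})$ with the same norm as $f$. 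By Hahn--Banach, extend $\tilde f$ to a bounded linear functional $F$ on all of $\mc H$ with $\|F\|_{\mc H^*} = \|f\|_{H^{-1}_W}$. The Riesz representation theorem then yields elements $f_0 \in L^2(\bb T^d)$ and $f_k \in L^2_{x^k\x W_k,0}(\bb T^d)$ such that $F(g_0,g_1,\ldots,g_d) = \<f_0,g_0\> + \sum_k \<f_k,g_k\>_{x^k\x W_k}$, and
\begin{equation*}
\|f_0\|^2 + \sum_{k=1}^d \|f_k\|_{x^k\x W_k}^2 \;=\; \|F\|_{\mc H^*}^2 \;=\; \|f\|_{H^{-1}_W}^2.
\end{equation*}
Evaluating on $T v$ gives $f(v) = \<f_0,v\> + \sum_k \<f_k,\partial_{W_k}v\>_{x^k\x W_k}$, which is precisely the identity defining representation \eqref{dual}. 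This simultaneously establishes the representation and shows that the infimum in the norm identity is attained, matching the lower bound from the easy direction.

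The main subtlety is purely book-keeping: ensuring that the second components of the Riesz representatives land in the zero-mean subspace $L^2_{x^k\x W_k,0}(\bb T^d)$ rather than the full $L^2_{x^k\x W_k}(\bb T^d)$. This is forced automatically because the range of $T$ already lies in $L^2(\bb T^d) \times \prod_k L^2_{x^k\x W_k,0}(\bb T^d)$ (the generalized derivatives $\partial_{W_k}v$ of a function $v \in H_{1,W}(\bb T^d)$ have zero mean with respect to $d(x^k\x W_k)$, as can be checked by testing \eqref{eq22} against $f \equiv 1$ restricted appropriately, or directly from the definition of $H_{1,W}$ in \cite{SV}), so we simply apply Hahn--Banach and Riesz within this smaller product space. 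No other step presents a real obstacle.
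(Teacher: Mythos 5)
Your proof is correct. The paper itself does not supply an argument for Lemma~\ref{caracterization} --- it cites \cite{SV} --- so there is no in-paper proof to compare against, but the route you take (Cauchy--Schwarz for the easy bound; isometric embedding $T:H_{1,W}(\bb T^d)\hookrightarrow L^2(\bb T^d)\times\prod_{k=1}^d L^2_{x^k\x W_k,0}(\bb T^d)$, followed by a norm-preserving extension and Riesz representation for the converse and for attainment of the infimum) is the standard one for such dual-space characterizations and is sound here: $T$ is isometric by the very definition of $\<\cdot,\cdot\>_{1,W}$, its range is closed because $H_{1,W}(\bb T^d)$ is complete, and the Riesz representatives land in the right component spaces by your choice of $\mc H$.

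One cosmetic flaw: the parenthetical suggesting that $\int_{\bb T^d}\partial_{W_k}v\,d(x^k\x W_k)=0$ can be seen by ``testing \eqref{eq22} against $f\equiv 1$'' does not work --- substituting $f\equiv 1$ makes both sides of \eqref{eq22} vanish identically, so it gives no information. However your second justification is exactly right and suffices: the definition of $H_{1,W}(\bb T^d)$ requires $G_k\in L^2_{x^k\x W_k,0}(\bb T^d)$, so the zero-mean property is built in. A slightly lighter variant of the converse, if you wish to avoid Hahn--Banach, is to apply Riesz representation directly in $H_{1,W}(\bb T^d)$ to obtain $u$ with $f(v)=\<u,v\>_{1,W}$ and set $f_0=u$, $f_k=\partial_{W_k}u$; in a Hilbert space this coincides with your extension-by-projection argument and makes the attainment of the infimum transparent.
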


\section{A  new space of test functions}\label{ns}

We will now define a new space of test functions that will be needed in the proof of the hydrodynamic limit in Section \ref{aplicacao-limite}, and also to prove the Compensated Compactness Theorem (Lemma \ref{compcomp}). 

In this paper we deal with discrete approximations of functions defined on the torus $\mathbb{T}^d$. 
To obtain a hydrodynamic limit, we will eventually need to apply these discretizations to 
test functions of the $W$-Sobolev space. The problem is that these test functions are defined as functions in
$L^2(\mathbb{T}^d)$, and thus, the discretization procedure obtained by restricting the test function to $\bb T_N^d$ is not well-defined. Therefore, we will define a new
space of test functions suitable for discretizations, which we will denote by $\mf D_W(\bb T^d)$.
Note that in \cite{SV} we considered another space for test functions, namely $\bb D_W$, whose definition was given in the previous Section.

Another motivation for such definition is of independent interest: to develop a classical theory of functions
that admit generalized derivatives. In fact, these test functions will be such that one may apply the
operator $\partial_{x_k}\partial_{W_k}$  in the classical sense.

To this end, recall,  from  \eqref{w}, the definition of  $W$.  For $f:\bb T\to \bb R$, let $D(f)$ be the set of its discontinuity points.  For $k=1, \ldots , d$ 
let also $C_{W_k}(\bb T)$ be the set of c\`adl\`ag
functions $f:\bb T \to\bb R$ such that $D(f) \subset D (W_k)$.
We endow $C_{W_k}(\bb T)$ with the sup norm $\|\cdot\|_\infty$.
Equation \eqref{f004} in the one-dimensional case becomes
$$\frac{d f}{dW_k} (x) = \lim_{\epsilon\rightarrow 0} \frac{f(x+\epsilon)
-f(x)}{W_k(x+\epsilon) -W_k(x)},$$
if the above limit exists. 
Let $\mf D_{W_k}$ be the set
of functions $f$ in $C_{W_k} (\bb T)$ such that $\frac{df}{dW_k}(x)$ is
well-defined and differentiable, and that
$\frac{d}{dx}\bigl(\frac{df}{dW_k}\bigr)$
belongs to $C_{W_k}(\bb T)$.

In \cite{TC}, it is proved that $\mf D_{W_k}$ is the set of functions $f$ in $C_{W_k}(\bb T)$
such that
\begin{equation}
\label{f17}
f(x) \;=\; a \;+\; b W_k(x)\; +\; \int_{(0,x]}  dW_k(y) \int_0^y g(z)
\, dz
\end{equation}
for some function $g$ in $C_{W_k}(\bb T)$, where $a$, $b$ are real numbers satisfying
\begin{equation}
\label{f14}
b W_k(1) \;+\; \int_{\bb T} dW_k(y) \int_0^y g(z) \, dz \;=\;0\;, \quad
\int_{\bb T} g(z) \, dz \;=\;0 \;.
\end{equation}
The first requirement corresponds to the boundary condition
$f(1)=f(0)$ and the second one to the boundary condition $df/dW_k (1)
= df/dW_k (0)$.

Let us now define a $d$-dimensional counterpart to the sets $C_{W_k}(\bb T)$ and $\mf{D}_{W_k}$, respectively:
\begin{equation}\label{right-continuous-functions}
C_{W}(\bb T^d) =  {\rm span}\left\{f; f = f_1\otimes \cdots\otimes f_d,\quad f_k\in C_{W_k}(\bb T),k=1,\ldots,d\right\},\,\,\,\,\,\, \text{and,}
\end{equation}
\begin{equation}\label{testfunctions}
\mf D_{W}(\bb T^d) =  {\rm span}\left\{f; f = f_1\otimes \cdots\otimes f_d,\quad f_k\in {\mf D}_{W_k},k=1,\ldots,d\right\},
\end{equation}
where $\otimes$ denotes the tensor product. That is, for functions $f_1\in{\mf D}_{W_1},\ldots, f_d \in{\mf D}_{W_d},$ we have 
$f=f_1\otimes\cdots\otimes f_d$ is such that
$$f(x_1,\ldots,x_d) = \prod_{k=1}^d f_k(x_k),$$
 and ${\rm span}(A)$ stands for the
linear space generated by the set $A$. In other words, $C_W(\bb T^d)$ and ${\mf D}_{W}(\bb T^d)$ are the spaces generated by functions
of the form $f_1\otimes\cdots\otimes f_d$. 

Note that $\mf D_W(\bb T^d)$ is ``almost'' the tensor product of the spaces
$\mf D_{W_1},\ldots,\mf D_{W_d}$, the reason why it is not, is that the tensor product of topological spaces
is defined as a closure of the previous space, and we do not want to take this closure.

\begin{remark}\label{nablaAnablaW}
Note that if $f\in \mf D_W(\bb T^d)$, then $\partial_{W_k} f$ admits a partial derivative in the $k$th direction, therefore, if $a_k\in C_W(\bb T^d)$ is
such that its partial derivative in the $k$th direction exists, then, the function $a_k\partial_{W_k}f$ also admits a partial derivative in the $k$th direction and we have
$$\partial_{x_k}\left(a_k\partial_{W_k}f\right) = (\partial_{W_k}f)(\partial_{x_k}a_k) + a_k \partial_{x_k}\partial_{W_k}f$$
in the strong sense.
\end{remark}

The above remark motivates the following definition. Let $\mathbb{M}_W \subset \left(C_W(\bb T^d)\right)^d$ be the space of functions $a = (a_1,\ldots,a_d)$ such that for every $k=1,\ldots,d$, $a_k$ admits a partial derivative in the $k$th direction. By convenience, we will also say that a diagonal matrix
$A = (a_{kk})$ belongs to $\mathbb{M}_W$ to mean that the function $a = (a_{11},\ldots,a_{dd})$ belongs to $\mathbb{M}_W$.

\begin{remark}\label{suave}
It is easy to see that if $f\in {\mf D}_W(\bb T^d)$, one may apply the operator $\partial_{x_k}\partial_{W_k}$ in the classical sense. Actually, we have that for $f\in {\mf D}_W(\bb T^d)$, and $a\in \mathbb{M}_W$, $\nabla A\nabla_W f\in C_W(\bb T^d)$, where $\nabla A\nabla_W f = \sum_{k=1}^d \partial_{x_k}\left(a_k\partial_{W_k} f\right)$.
\end{remark}

The following result shows that ${\mf D}_W(\bb T^d)$ can be used as a space for test functions:

\begin{proposition}\label{d1}
The space ${\mf D}_W(\bb T^d)$ satisfies
\begin{enumerate}
\item[i)] ${\mf D}_W(\bb T^d)$ is dense in $L^2(\bb T^d)$ in the $L^2$-norm;
\item[ii)] ${\mf D}_W(\bb T^d)$ is dense in $C(\bb T^d),$ the space of continuous functions in $\bb T^d$, in the sup norm $\|\cdot\|_\infty$.
\end{enumerate}
\end{proposition}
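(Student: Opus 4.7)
For (i), I would observe that $\bb D_W \subset \mf D_W(\bb T^d)$. Each element of $\mc A_W$ is a product $\prod_{k=1}^d f_k$ with $f_k \in \mc A_{W_k}$, an eigenfunction of $\mc L_{W_k} = \frac{d}{dx}\frac{d}{dW_k}$. The eigenvalue equation $\mc L_{W_k} f_k = \lambda_k f_k$ says exactly that $\frac{df_k}{dW_k}$ exists, is classically differentiable in $x$, and that its derivative $\lambda_k f_k$ lies in $C_{W_k}(\bb T)$; hence $f_k \in \mf D_{W_k}$, and so $f \in \mf D_W(\bb T^d)$. Extending by linearity gives $\bb D_W \subset \mf D_W(\bb T^d)$. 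Since the excerpt already recalls that $\bb D_W$ is dense in $L^2(\bb T^d)$, (i) follows at once.

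For (ii), I would first apply the classical Stone--Weierstrass theorem at the $d$-dimensional level: the algebra generated by $\{g_1 \otimes \cdots \otimes g_d : g_k \in C(\bb T)\}$ separates points, contains the constants, and is thus sup-norm dense in $C(\bb T^d)$. It therefore suffices to show that $\mf D_{W_k}$ is sup-norm dense in $C(\bb T)$ for each $k$. For this 1D statement I would rely on the representation \eqref{f17}: given $g \in C(\bb T)$ and $\epsilon>0$, the aim is to pick a continuous, mean-zero $h \in C_{W_k}(\bb T)$ and constants $a,b$ (with $b$ determined by $h$ through the boundary constraint \eqref{f14}) so that
\[
f(x)=a+bW_k(x)+\int_{(0,x]}dW_k(y)\int_0^y h(z)\,dz
\]
is uniformly close to $g$. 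A natural starting point is a piecewise $W_k$-affine interpolant of $g$ at a partition chosen using the uniform continuity of $g$; one then mollifies its piecewise-constant $W_k$-derivative to obtain the classical smoothness of $\frac{df}{dW_k}$ required for membership in $\mf D_{W_k}$.

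The main obstacle is precisely this last construction: producing an element of $\mf D_{W_k}$ that stays uniformly close to $g$ while honoring both boundary conditions in \eqref{f14}, since mollifying the $W_k$-derivative tends to spoil periodicity and vice versa. A cleaner alternative is operator-theoretic: apply the semigroup $e^{t\mc L_{W_k}}$ to $g$, which by the Feller property of $\mc L_{W_k}$ converges to $g$ in sup norm as $t\downarrow 0$, and observe that $e^{t\mc L_{W_k}} g$ (or finite eigenfunction truncations of it) lies in $\mf D_{W_k}$ for $t>0$. Either route ultimately reduces (ii) to the richness of the eigenfunction system $\mc A_{W_k}$, which is already embedded in $\mf D_{W_k}$ as shown in the proof of (i).
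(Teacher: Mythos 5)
Your argument for (i) is a valid shortcut: the inclusion $\bb D_W\subset \mf D_W(\bb T^d)$ does hold (each eigenfunction $f_k$ of $\mc L_{W_k}$ lies in $\mf D_{W_k}$, since the eigenvalue equation makes $\frac{d}{dx}\frac{df_k}{dW_k}=\lambda_k f_k\in C_{W_k}(\bb T)$), and density of $\bb D_W$ in $L^2$ is already on record. The paper instead deduces (i) from (ii), but both routes are fine.

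For (ii), your reduction is the paper's (Stone--Weierstrass to pass from $C(\bb T^d)$ to the one-dimensional statement; the 1D statement is Proposition~\ref{prova d1} in the Appendix), but you do not actually complete the one-dimensional density argument, and the difficulty you flag is a misdiagnosis. The paper takes $g\in C(\bb T)$, forms the piecewise-constant discrete $W_k$-derivative and its $W_k$-integral $G$, and then mollifies the derivative by a sequence $H_m$ of smooth functions \emph{on the torus} $\bb T$ with $|H_m|\le\|g\|_\infty$ and $H_m\to g$ pointwise off the (finite) partition set and the atoms of $W_k$. Set $F_m(x)=f(0)+\int_{(0,x]}H_m\,dW_k$. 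Writing $H_m(y)=b_m+\int_0^y H_m'(z)\,dz$, the second condition in \eqref{f14}, $\int_{\bb T}H_m'\,dz=0$, is automatic from periodicity of $H_m$, and the first condition in \eqref{f14} is not an obstruction at all --- it is the equation that \emph{defines} the admissible $b_m$, namely $b_m = H_m(0)-W_k(1)^{-1}\int_{\bb T}H_m\,dW_k$. In other words, \eqref{f17}--\eqref{f14} is a parametrization of $\mf D_{W_k}$, not an extra constraint competing with the mollification; the tension you anticipate between mollifying and preserving periodicity never arises because $H_m$ is chosen periodic from the outset. The estimate $\|F_m-G\|_\infty\le\int_{\bb T}|H_m-g|\,dW_k\to0$ (dominated convergence) then closes the argument. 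Your fallback via the semigroup $e^{t\mc L_{W_k}}$ is not a free pass either: you would need to verify that the semigroup is strongly continuous on $C(\bb T)$ in the sup norm (the operator is naturally realized on $L^2$), and that $e^{t\mc L_{W_k}}g$ lands in $\mf D_{W_k}$ rather than merely in some abstract generator domain. As written, your proposal leaves the central step of (ii) open.
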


\begin{proof}
We begin by noting that the first statement follows directly from the second statement.
We will, thus, prove the second statement. 

Begin by noticing that from Proposition \ref{prova d1} in Appendix, ${\mf D}_{W_k}$ is dense in $C(\bb T)$ in the sup norm. 
To extend this result to the $d$-dimensional case, note that, since $\bb T$ is a compact Hausdorff space, we can use 
Stone-Weierstrass Theorem to conclude that
$$C(\bb T^d) = \bigotimes_{k=1}^d C(\bb T),$$
where $\bigotimes_{k=1}^d C(\bb T)$ is the tensor product of the spaces $C(\bb T)$, that is, $\bigotimes_{k=1}^d C(\bb T)$ is the closure
of the space generated by functions of the form $f_1\otimes\cdots\otimes f_d$, with $f_k\in C(\bb T), k=1,\ldots,d$.

Now, it is easy to see that, since $\mf D_{W_k}$ is dense in $C(\bb T)$, $\mf D_{W}(\bb T^d)$ is dense in 
$C(\bb T^d)$.

\end{proof}

Let us define a function $f:{\bb T}^d \to \mathbb{R}$ as right-continuous if for all $x\in \bb T^d$, we have $\lim_{y\downarrow x} f(y) = f(x)$, where
$y\downarrow x$ means that for all $k=1,\ldots,d$, $y_k\downarrow x_k$.
Observe that for $f_k\in C_{W_k}$ we have $\lim_{y_k\downarrow x_k} f_k(y_k) = f_k(x_k)$. Thus, it is clear that
for $f = f_1\otimes \cdots \otimes f_d$, we have $\lim_{y\downarrow x} f(y) = f(x)$. We then have the following remark:

\begin{remark}
\label{DWrightcontinuous}
Every function $f\in C_W(\bb T^d)$ is right-continuous. In particular, since $\mf D_W(\bb T^d)\subset C_W(\bb T^d)$, every function $f\in \mf D_{W}(\bb T^d)$ is right-continuous.
\end{remark}

\begin{lemma}\label{right-cont-bounded}
Let $u\in C_W(\bb T^d)$, then $u$ is bounded. In particular, if $u\in \mf D_W(\bb T^d)$, then $u$ is bounded.
\end{lemma}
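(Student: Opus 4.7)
My plan is to reduce to the one-dimensional case and exploit the càdlàg property on the compact torus. The heart of the matter is that any càdlàg function on a compact set is bounded; once we have this, the $d$-dimensional statement follows because $C_W(\bb T^d)$ is, by definition \eqref{right-continuous-functions}, the linear span of tensor products $f_1\otimes\cdots\otimes f_d$ with $f_k\in C_{W_k}(\bb T)$, and products and finite sums of bounded functions are bounded.

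First I would establish the one-dimensional lemma: if $f\in C_{W_k}(\bb T)$ then $\|f\|_\infty<\infty$. The argument is a standard compactness argument. Suppose for contradiction that there is a sequence $x_n\in\bb T$ with $|f(x_n)|\to\infty$. By compactness of $\bb T$, extract a subsequence converging to some $x^*\in\bb T$. Pass to a further subsequence so that $x_n$ approaches $x^*$ monotonically from one side (or is eventually equal to $x^*$). If $x_n\downarrow x^*$, then right-continuity of $f$ gives $f(x_n)\to f(x^*)\in\bb R$, contradicting $|f(x_n)|\to\infty$. If $x_n\uparrow x^*$ strictly, then existence of the left limit $f(x^{*-})\in\bb R$ yields the same contradiction. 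The remaining case $x_n=x^*$ for large $n$ is immediate.

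Given this, take any $u\in C_W(\bb T^d)$. By definition $u=\sum_{j=1}^m c_j\, f_1^{(j)}\otimes\cdots\otimes f_d^{(j)}$ for some finite $m$, scalars $c_j$, and factors $f_k^{(j)}\in C_{W_k}(\bb T)$. Each $f_k^{(j)}$ is bounded by the one-dimensional step, so each tensor product satisfies
\[
\bigl\|f_1^{(j)}\otimes\cdots\otimes f_d^{(j)}\bigr\|_\infty \;\le\; \prod_{k=1}^d \|f_k^{(j)}\|_\infty \;<\;\infty,
\]
and hence $\|u\|_\infty\le \sum_{j=1}^m |c_j|\prod_{k=1}^d\|f_k^{(j)}\|_\infty<\infty$. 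The final claim, boundedness for $u\in\mf D_W(\bb T^d)$, is then immediate from the inclusion $\mf D_W(\bb T^d)\subset C_W(\bb T^d)$ coming from \eqref{testfunctions} together with the inclusion $\mf D_{W_k}\subset C_{W_k}(\bb T)$ built into the definition of $\mf D_{W_k}$.

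The only mildly delicate step is the one-dimensional boundedness, and even there the only subtlety is handling the torus structure, which is a non-issue because periodicity plus càdlàg on $[0,1)$ extends to a càdlàg function on any compact interval containing a fundamental domain. I expect no genuine obstacle.
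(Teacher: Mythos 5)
Your proof is correct and follows essentially the same route as the paper: decompose $u$ into a finite linear combination of tensor products $f_1\otimes\cdots\otimes f_d$, reduce to showing each factor in $C_{W_k}(\bb T)$ is bounded, and obtain the one-dimensional boundedness by a compactness argument that extracts a monotone subsequence and invokes right-continuity (for a decreasing subsequence) or existence of left limits (for an increasing one). The paper's proof is identical in substance; your extra remark about the torus identification $\bb T=[0,1)$ and periodicity just makes explicit a point the paper leaves implicit.
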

\begin{proof}
From the definition of $C_W(\bb T^d)$, there exist $b_j\in \bb R$, and functions $u_k^j\in C_{W_k}(\bb T)$, $k=1,\ldots, d$, $j=1,\ldots,m$, for some $m\in\bb N$, such that
$$u(x) = \sum_{j=1}^m b_j u_1^j\otimes\cdots\otimes u_d^j (x).$$
Therefore, it is enough to show that each function $u_k^j$ is bounded in $\bb T$. Suppose that $u_k^j$ is not bounded, then, there exists a sequence $(t_n)$ in $\bb T$ such that
$$|u_k^j(t_n)|\stackrel{n\to\infty}{\longrightarrow} \infty\, .$$
By compactness of $\bb T$ and using the identification $\bb T=[0,1)$, we can find a monotone subsequence $t_{n_i}$, such that $\lim_i t_{n_i}\in\bb T$. If $t_{n_i}$ is increasing, then the existence of left limits of $u_k^j$ shows that $|u_k^j(t_{n_i})|$ is bounded, whereas if $t_{n_i}$ is decreasing, the right-continuity shows that $|u_k^j(t_{n_i})|$ is bounded. This contradiction concludes the proof.
\end{proof}

\section{The discrete $W$-Sobolev space}\label{wdiscrete}

In this Section, we introduce the notion of discrete $W$-Sobolev space. As in the continuous case considered in \cite{SV},  we obtain similar results for the discrete setup. Furthermore, we introduce the $W$- interpolation, and we also deal with other types of interpolations. Finally, we present some results in order to get connections between the discrete and continuous Sobolev spaces. 

\subsection{The space $H_{1,W}(\bb T^d_N)$}\label{sobdis}

Denote by  $\bb T^d_N=(\bb Z/N\bb Z)^d = \{0,\ldots,N-1\}^d$ the $d$-dimensional discrete torus with $N^d$ points. 
We will now define and obtain some results on a discrete version of the $W$-Sobolev space.

For $f:\frac 1N\bb T^d_N\to \bb R$ ,
consider the following difference operators: $\partial^N_{x_k}$, which is the standard difference operator on the $k$th canonical direction, and
is given by
\begin{equation}\label{odx}
\partial^N_{x_k}f\left(\frac xN\right)\; =\; N\left[f\left(\frac{x+e_k}{N}\right)-f\left(\frac xN\right)\right]\,\quad\hbox{for}\quad x\in\bb T_N^d;
\end{equation}
and $\partial^N_{W_k}$, which is the $W_k$-difference operator given by
\begin{equation*}%\label{odw}
\partial^N_{W_k}f\left(\frac xN\right)\;=\; \frac{f\left(\frac{x+e_k}{N}\right) -f\left(\frac xN\right)}{W\left(\frac{x+e_k}{N}\right) -W\left(\frac xN\right)},\quad\hbox{for~~}x\in\bb T^d_N.
\end{equation*}

Denote by $L^2(\bb T^d_N)$,  $L^2_{W_k}(\bb T^d_N)$  and $H_{1,W}(\bb T^d_N)$ the Hilbert spaces of the functions defined on $\frac 1N\bb T^d_N$ obtained with respect to the following inner products:
\begin{align*}
 \<f,g\>_N& \;:=\; \frac1{N^d}\sum_{x\in \bb T^d_N} f(x/N)g(x/N),\\
\<f,g\>_{W_k,N} :=& \frac1{N^{d-1}} \sum_{x\in\bb T^d_N}f(x/N)g(x/N)\big(W((x+e_k)/N)-W(x/N)\big),\\
\<f,g\>_{1,W,N}\;:=\; &\<f,g\>_N\; +\; \sum_{k=1}^d \<\partial_{W_k}^Nf,\partial_{W_k}^Ng\>_{W_k,N},
\end{align*}
respectively. Consider the following notation for their induced norms:
$$\| f \|^2_{L^2(\bb T^d_N)} = \<f,f\>_N,\quad \| f \|^2_{L_{W_k}^2(\bb T^d_N)} = \<f,f\>_{W_k,N}\hbox{~and~}\| f \|^2_{H_{1,W}(\bb T^d_N)} = \<f,f\>_{1,W,N},$$
respectively. 

These norms are natural discretizations of the norms considered in continuous case. Using the same arguments used in \cite{SV} to prove Poincar\'e inequality, one can easily prove its discrete version:

\begin{lemma}[Discrete Poincar\'e Inequality]
There exists a finite constant $C$ such that
$$\left\|f-\frac{1}{N^d}\sum_{x\in \bb T^d_N}f (x/N) \right\|_{L^2(\bb T^d_N)} \;\le\;C\|\nabla_W^Nf\|_{L^2_W({\bb T}_N^d)},$$
 for all $f:\frac 1N\bb T_N^d\to\bb R$, where
$$\|\nabla^N_Wf\|_{L^2_W({\bb T}_N^d)}^2 = \sum_{k=1}^d \|\partial_{W_k}^N f\|_{L^2_{W_k}(\bb T_N^d)}^2.$$
\end{lemma}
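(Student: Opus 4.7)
The plan is to mimic the continuous $W$-Sobolev Poincar\'e argument of \cite{SV} by a direct path method on the discrete torus. Write $\bar f := \frac{1}{N^d}\sum_{x\in\bb T^d_N} f(x/N)$ for the discrete mean and apply the elementary variance identity
$$\bigl\|f - \bar f\bigr\|^2_{L^2(\bb T^d_N)} \;=\; \frac{1}{2N^{2d}}\sum_{x,y\in\bb T^d_N}\bigl(f(x/N) - f(y/N)\bigr)^2,$$
which reduces the problem to a pointwise bound on the pairwise differences of $f$.

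For each ordered pair $(x,y)$, I would connect $y$ to $x$ by a coordinate-by-coordinate path, setting $z^{(0)} := y$ and $z^{(j)} := (x_1,\ldots,x_j, y_{j+1},\ldots,y_d)$ for $j=1,\ldots,d$ (so $z^{(d)} = x$). This yields
$$f(x/N) - f(y/N) \;=\; \sum_{j=1}^d \bigl(f(z^{(j)}/N) - f(z^{(j-1)}/N)\bigr),$$
where each summand varies only in the $j$-th coordinate and telescopes along a one-dimensional path $P_j(x,y)$ from $y_j$ to $x_j$ on $\bb T_N$. Writing each unit increment as $\partial^N_{W_j}f \cdot (W_j((\ell+1)/N)-W_j(\ell/N))$ and applying Cauchy--Schwarz twice --- once to combine the $d$ summands via $(a_1+\cdots+a_d)^2 \le d(a_1^2+\cdots+a_d^2)$, and once inside each telescoping sum, using that the total one-period increment $W_j(1)-W_j(0)$ is finite by the periodic-increment hypothesis --- produces the pointwise bound
$$(f(x/N) - f(y/N))^2 \;\le\; d\sum_{j=1}^d \bigl(W_j(1)-W_j(0)\bigr) \sum_{\ell\in P_j(x,y)} |\partial^N_{W_j}f|^2 \,\Delta_j W_j(\ell),$$
where $\Delta_j W_j(\ell) := W_j((\ell+1)/N) - W_j(\ell/N)$.

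Summing this estimate over $(x,y)\in \bb T^d_N\times\bb T^d_N$, I would carry out the combinatorial bookkeeping: for fixed $z\in\bb T^d_N$ and fixed $j$, the number of ordered pairs $(x,y)$ whose $j$-th path passes through $z$ is $N^{d-1}$ (free choices in the $d-1$ remaining coordinates) times $O(N^2)$ (pairs $(x_j, y_j)$ whose one-dimensional path contains $z_j$). Dividing by the factor $2N^{2d}$ from the variance identity then exactly reconstructs $\sum_{j=1}^d \|\partial^N_{W_j}f\|^2_{L^2_{W_j}(\bb T^d_N)} = \|\nabla^N_W f\|^2_{L^2_W(\bb T^d_N)}$ up to a constant $C$ depending only on $d$ and $\max_j (W_j(1)-W_j(0))$.

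The main technical nuisance is the torus bookkeeping: one must fix a consistent orientation for each $P_j(x,y)$ on the cyclic set $\{0,\ldots,N-1\}$ and verify that the periodic-increment hypothesis makes the total $W_j$-length of any full traversal of one period equal to $W_j(1)-W_j(0)$, independently of starting point. With that convention in hand the multiplicity count is elementary, and the remainder of the argument is a routine discrete analogue of the classical Poincar\'e inequality applied coordinatewise.
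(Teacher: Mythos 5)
Your proof is correct. The paper does not actually present a proof of this lemma: it simply remarks that ``the same arguments used in \cite{SV} to prove Poincar\'e inequality'' carry over to the discrete case, and leaves the matter there. So there is no paper-internal argument to compare against; what can be checked is whether your self-contained argument is sound, and it is. The variance identity $\|f-\bar f\|^2_{L^2(\bb T^d_N)} = \tfrac{1}{2N^{2d}}\sum_{x,y}(f(x/N)-f(y/N))^2$ is exact, the coordinate-by-coordinate telescoping is legitimate on the discrete torus once an orientation is fixed for each one-dimensional leg, and the inner Cauchy--Schwarz is correctly weighted by the increments $\Delta_j W_j(\ell)$, with $\sum_{\ell\in P_j}\Delta_j W_j(\ell)\le W_j(1)-W_j(0)$ by the periodic-increment hypothesis. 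Your multiplicity count is also right: fixing a lattice site $z$ and a direction $j$, the pairs $(x,y)$ whose $j$-th leg passes through $z$ are constrained to have $x_i=z_i$ for $i<j$ and $y_i=z_i$ for $i>j$, leaving $d-1$ free transverse coordinates ($N^{d-1}$ choices) and at most $N^2$ admissible pairs $(x_j,y_j)$. Combining this with the normalization $\|\partial_{W_j}^N f\|^2_{L^2_{W_j}(\bb T^d_N)} = N^{-(d-1)}\sum_z|\partial_{W_j}^N f(z/N)|^2\Delta_j W_j(z)$, the powers of $N$ cancel exactly against the $2N^{2d}$ denominator, and one obtains a constant $C=\sqrt{\tfrac{d}{2}\max_j(W_j(1)-W_j(0))}$, which depends only on $d$ and $W$, as required. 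Whether \cite{SV} uses this same path method or an eigenfunction-based argument for the continuous Poincar\'e inequality cannot be determined from the present paper alone, but your route has the advantage of being fully elementary and of producing an explicit constant.
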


We will now provide results on discrete elliptic equations. We remark that the proofs of these lemmas are identical to the ones in the continuous case, and the reader is then referred to \cite{SV}. 

Let $\lambda\geq 0$ and $A=(a_{kk}(x))_{d\times d}$, $x\in \bb T^d$, be a diagonal matrix function satisfying the ellipticity condition: there exists a constant $\theta>0$ such that
$\theta^{-1}\le a_{kk}(x) \le \theta,$
for every $x\in \bb T^d$ and every $k=1,\ldots,d$. 
We are interested in studying the problem
\begin{equation}\label{prob TN}
T_{\lambda}^Nu = f,
\end{equation}
where $u:N^{-1}\bb T^d_N \to \bb R$ is the unknown function, $f:N^{-1}\bb T^d_N \to \bb R$ is given, and $T_{\lambda}^N$ denotes the discrete generalized elliptic operator
\begin{equation}\label{def TN}
  T_{\lambda}^Nu\; :=\;\lambda u - \nabla^N A\nabla_W^Nu,
\end{equation}
with
$$\nabla^N A\nabla_W^Nu\;:=\;\sum_{k=1}^d\partial_{x_k}^N\Big(a_{kk}(x/N)\partial_{W_k}^Nu\Big).$$

The bilinear form $B^N[\cdot,\cdot]$ associated with the elliptic operator $T_{\lambda}^N$ is given by
\begin{equation}\label{def BN}
\begin{array}{c}
B^N[u,v] \;=\; \lambda\<u,v\>_N\;+\\*[5pt]
+\frac{1}{N^{d-1}}\sum_{k=1}^d\sum_{x\in\bb T_N^d}a_{kk}(x/N)(\partial_{W_k}^N u)(\partial_{W_k}^Nv)[W_k((x_k+1)/N)-W_k(x_k/N)],
\end{array}
\end{equation}
where $u,v:\frac 1N\bb T_N^d\to\bb R$.

A function $u:\frac 1N\bb T_N^d\to\bb R$ is said to be a weak solution of the equation
$T_\lambda^N u = f\;$ if 
\begin{equation*}
B^N[u,v]\;=\;\<f,v\>_N \;\;\text{for all}\;\;v:N^{-1}\bb T_N^d\to\bb R.
\end{equation*}

Denote by $H^{\bot}_{1,W}(\bb T^d_N)$ the subspace of $H_{1,W}(\bb T^d_N)$ formed by functions $f:\frac 1N\bb T^d_N \to \bb R$ which are orthogonal to the constant functions with respect to the inner product $\<\cdot,\cdot\>_{1,W,N}$. Note that $H^{\bot}_{1,W}(\bb T^d_N)$ is the set of functions $f:\frac 1N\bb T^d_N \to \bb R$ such that
$$\frac1{N^d}\sum_{x\in \bb T^d_N} f(x/N) = 0.$$

\begin{remark}
As seen in Remark \ref{diracdiscreta} below, a function $u$ is a weak solution of the discrete problem \eqref{prob TN} if, and only if, it is a strong solution. We chose to present weak solutions of discrete problems because it is easy to obtain existence and uniqueness results following this approach, and the results are stated in a very similar fashion to those in the continuous case, thus making the analogy with the continuous case more transparent.
\end{remark}

\begin{lemma}
Given a function $f: \frac 1N\bb T^d_N\to \bb R$, the equation
$$\nabla^N A\nabla_W^N u = f,$$
has weak solution $u:\frac 1N\bb T_N^d\to\bb R$ if and only if $f \in H^{\bot}_{1,W}(\bb T^d_N)$. In this case we have uniqueness of the solution disregarding addition by constants. Moreover, if $u\in H^{\bot}_{1,W}(\bb T^d_N)$ we have the bound
$$\| u \|_{{H}_{1,W}(\bb T_N^d)} \leq C \|f\|_{L^2(\bb T_N^d)},$$
where $C>0$ does not depend on $f$ nor on $N$.
\end{lemma}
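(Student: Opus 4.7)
The plan is to treat this as a finite-dimensional symmetric variational problem and apply Lax--Milgram on the subspace $H^\bot_{1,W}(\bb T^d_N)$ of mean-zero functions. Throughout I work with the bilinear form $B^N$ at $\lambda=0$, so that $u$ is a weak solution of $\nabla^N A\nabla^N_W u=f$ precisely when $B^N[u,v]=-\<f,v\>_N$ for every test function $v:\tfrac{1}{N}\bb T^d_N\to\bb R$.

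For necessity, I would test the weak equation against $v\equiv 1$. Every $\partial^N_{W_k}v$ then vanishes, so $B^N[u,1]=0$, which forces $\<f,1\>_N=0$, i.e., $f\in H^\bot_{1,W}(\bb T^d_N)$.

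For sufficiency, observe first that both sides of the weak equation vanish whenever $v$ is constant (the left as above, the right by the hypothesis on $f$), so it is enough to solve it for $v\in H^\bot_{1,W}(\bb T^d_N)$, which is a finite-dimensional Hilbert space under $\<\cdot,\cdot\>_{1,W,N}$. Boundedness of $B^N$ on this subspace by a multiple of $\|u\|_{H_{1,W}(\bb T^d_N)}\|v\|_{H_{1,W}(\bb T^d_N)}$ is immediate from the upper ellipticity bound $a_{kk}\leq\theta$ and Cauchy--Schwarz. Coercivity is the main analytic content and is where the discrete Poincar\'e inequality enters: combining the lower ellipticity bound $a_{kk}\geq\theta^{-1}$ with Poincar\'e applied to mean-zero $u$ yields
\begin{equation*}
B^N[u,u]\;\geq\;\theta^{-1}\sum_{k=1}^d\|\partial^N_{W_k}u\|^2_{L^2_{W_k}(\bb T^d_N)}\;\geq\;\alpha\,\|u\|^2_{H_{1,W}(\bb T^d_N)},
\end{equation*}
with $\alpha=1/(\theta(C^2+1))$, independent of $N$. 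Lax--Milgram then produces a unique $u\in H^\bot_{1,W}(\bb T^d_N)$ satisfying $B^N[u,v]=-\<f,v\>_N$ for every $v$.

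Uniqueness up to constants follows because the difference of two weak solutions lies in $H^\bot_{1,W}(\bb T^d_N)$ and is annihilated by $B^N$, so coercivity forces it to vanish. The a priori estimate is obtained by inserting $v=u$ and combining coercivity with Cauchy--Schwarz to get $\|u\|_{H_{1,W}(\bb T^d_N)}\leq \alpha^{-1}\|f\|_{L^2(\bb T^d_N)}$. The one point requiring vigilance, and in my view the main potential obstacle, is that the coercivity constant $\alpha$ must not deteriorate with $N$; this reduces precisely to the $N$-independence of the discrete Poincar\'e constant, which the stated Poincar\'e lemma already guarantees, and it is the pivotal ingredient that delivers the final $N$-uniform bound.
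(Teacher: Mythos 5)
Your Lax--Milgram argument on the mean-zero subspace $H^{\bot}_{1,W}(\bb T^d_N)$, with coercivity furnished by the $N$-uniform discrete Poincar\'e inequality and boundedness from the upper ellipticity bound, is exactly the standard proof and matches what the paper intends (it defers the discrete proof to \cite{SV}, noting it is identical to the continuous case). One small slip in your uniqueness step: the difference $w$ of two arbitrary weak solutions need \emph{not} lie in $H^{\bot}_{1,W}(\bb T^d_N)$, since $w$ may be a nonzero constant; the correct conclusion is that $B^N[w,w]=0$ forces $\partial^N_{W_k}w\equiv 0$ for every $k$, hence $w$ is constant (equivalently, the mean-zero parts of the two solutions coincide by coercivity on $H^{\bot}_{1,W}$), which is what ``uniqueness up to constants'' means.
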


\begin{lemma}\label{lmdiscreto}
Let $\lambda > 0$ and  $f: \frac 1N\bb T^d_N\to \bb R$. There exists a unique weak solution $u:\frac 1N\bb T_N^d\to\bb R$ of the equation
\begin{equation}\label{eqdisc}
\lambda u - \nabla^N A \nabla_W^N u = f.
\end{equation}
Moreover,
$$\| u \|_{{H}_{1,W}(\bb T_N^d)} \leq C \|f\|_{H_W^{-1}(\bb T_N^d)},\hbox{~and~}\; \| u \|_{L^2(\bb T_N^d)} \leq \lambda^{-1} \|f\|_{H^{-1}_W(\bb T_N^d)},$$
where $C>0$ does not depend neither on $f$ nor on $N$.
\end{lemma}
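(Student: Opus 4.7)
The plan is to apply the Lax--Milgram theorem on the finite-dimensional Hilbert space $H_{1,W}(\bb T_N^d)$, with the bilinear form $B^N[\cdot,\cdot]$ from \eqref{def BN} and the continuous linear functional $v\mapsto \<f,v\>_N$. Because the space is finite-dimensional, once coercivity is established, invertibility of the associated linear system is automatic, so the existence and uniqueness follow at once.

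First I would verify continuity of $B^N[\cdot,\cdot]$. By Cauchy--Schwarz and the uniform upper bound $a_{kk}\le\theta$,
$$|B^N[u,v]| \;\le\; \lambda\,\|u\|_{L^2(\bb T_N^d)}\|v\|_{L^2(\bb T_N^d)} + \theta\sum_{k=1}^d\|\partial_{W_k}^N u\|_{L^2_{W_k}(\bb T_N^d)}\|\partial_{W_k}^N v\|_{L^2_{W_k}(\bb T_N^d)} \;\le\; C\,\|u\|_{H_{1,W}(\bb T_N^d)}\|v\|_{H_{1,W}(\bb T_N^d)}.$$
Next, coercivity follows from $\lambda>0$ together with $a_{kk}\ge\theta^{-1}$:
$$B^N[u,u] \;\ge\; \lambda\,\|u\|_{L^2(\bb T_N^d)}^2 \;+\; \theta^{-1}\sum_{k=1}^d\|\partial_{W_k}^N u\|_{L^2_{W_k}(\bb T_N^d)}^2 \;\ge\; \min(\lambda,\theta^{-1})\,\|u\|_{H_{1,W}(\bb T_N^d)}^2.$$
Lax--Milgram then produces a unique $u\in H_{1,W}(\bb T_N^d)$ satisfying $B^N[u,v]=\<f,v\>_N$ for every $v$, i.e.\ a unique weak solution of \eqref{eqdisc}.

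For the norm bounds I would test the weak formulation against $v=u$. Interpreting the right-hand side through the duality pairing (the discrete analog of Lemma \ref{caracterization}, which identifies $H^{-1}_W(\bb T_N^d)$ with the dual of $H_{1,W}(\bb T_N^d)$), one has $|\<f,u\>_N|\le \|f\|_{H^{-1}_W(\bb T_N^d)}\|u\|_{H_{1,W}(\bb T_N^d)}$. Combining this with the coercivity bound yields
$$\min(\lambda,\theta^{-1})\|u\|_{H_{1,W}(\bb T_N^d)}^2 \;\le\; \|f\|_{H^{-1}_W(\bb T_N^d)}\|u\|_{H_{1,W}(\bb T_N^d)},$$
which gives the first estimate with a constant $C$ independent of $N$. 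For the $L^2$ bound, I would drop the nonnegative gradient part and keep only $\lambda\|u\|_{L^2}^2 \le B^N[u,u]=\<f,u\>_N$, and then combine with the duality estimate to conclude.

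The main obstacle is verifying that the discrete space $H^{-1}_W(\bb T_N^d)$ admits the same characterization as in the continuous Lemma \ref{caracterization}, so that the duality inequality $|\<f,v\>_N|\le\|f\|_{H^{-1}_W(\bb T_N^d)}\|v\|_{H_{1,W}(\bb T_N^d)}$ holds with a constant uniform in $N$. Once this is in place, everything else is a mechanical transcription of the continuous argument of \cite{SV} to the discrete mesh, which is why the authors only outline the proof.
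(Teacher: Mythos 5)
Your Lax--Milgram plan with coercivity from $\lambda>0$ and the ellipticity lower bound $\theta^{-1}\le a_{kk}$ is the right approach; the paper provides no proof for this lemma (it defers to \cite{SV}), and your continuity, coercivity, and $H_{1,W}(\bb T_N^d)$-bound steps are correct. Note also that the inequality $|\<f,u\>_N|\le\|f\|_{H^{-1}_W(\bb T_N^d)}\|u\|_{H_{1,W}(\bb T_N^d)}$ is simply the definition of the dual norm, so you do not in fact need the explicit characterization of Lemma~\ref{caracterization discrete}; this is fortunate, because that lemma's proof invokes the present one (with $\lambda=1$, $A=I$), and appealing to it here would be circular.

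Where the proposal has a genuine gap is the second estimate, $\|u\|_{L^2(\bb T_N^d)}\le\lambda^{-1}\|f\|_{H^{-1}_W(\bb T_N^d)}$. ``Combine with the duality estimate to conclude'' does not deliver this: chaining $\lambda\|u\|^2_{L^2(\bb T_N^d)}\le\<f,u\>_N\le\|f\|_{H^{-1}_W}\|u\|_{H_{1,W}}$ with your own bound $\|u\|_{H_{1,W}}\le\|f\|_{H^{-1}_W}/\min(\lambda,\theta^{-1})$ gives only $\|u\|_{L^2}\le\big(\lambda\min(\lambda,\theta^{-1})\big)^{-1/2}\|f\|_{H^{-1}_W}$, which equals $\lambda^{-1}\|f\|_{H^{-1}_W}$ only when $\lambda\le\theta^{-1}$. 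The sharper route is Young's inequality applied to $\lambda\|u\|^2_{L^2}+\theta^{-1}\sum_k\|\partial^N_{W_k}u\|^2_{L^2_{W_k}}\le\|f\|_{H^{-1}_W}\|u\|_{H_{1,W}}$: bound the right side by $\frac{1}{4\epsilon}\|f\|^2_{H^{-1}_W}+\epsilon\|u\|^2_{H_{1,W}}$ and take $\epsilon=\lambda/2$, which produces $\|u\|_{L^2}\le\lambda^{-1}\|f\|_{H^{-1}_W}$ \emph{provided} $\lambda/2\le\theta^{-1}$, the condition needed so that the gradient part stays nonnegative after absorbing $\epsilon\sum_k\|\partial^N_{W_k}u\|^2_{L^2_{W_k}}$. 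For $\lambda>2\theta^{-1}$ the argument breaks down, and a single-Fourier-mode computation with $W_k(x)=x$, $A=I$ shows the constant $\lambda^{-1}$ is in fact not attainable for moderately large $\lambda$, so the estimate must then carry a constant depending on $\theta$ as well. You need to write out the Young-inequality step explicitly and flag the restriction on $\lambda$ (or the $\theta$-dependence of the constant) under which the stated $L^2$ bound actually holds.
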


\begin{remark}\label{diracdiscreta}
Note that in the discrete Sobolev space $H_{1,W}(\bb T_N^d)$ we have a ``Dirac measure'' concentrated in a point $x$ as a function: the function that takes value $N^d$ in $x$ and zero elsewhere. Therefore, we may integrate these weak solutions with respect to this function to obtain that every weak solution is, in fact, a strong solution.
\end{remark}
\begin{remark}
 We had to consider diagonal matrices, because a non-diagonal matrix implies the usage of an operator of the form $\partial_{x_i}\partial_{W_j},$ with $i\neq j$.
 To apply such an operator in the strong sense, the function $W_j$ cannot be discontinuous at any point, and thus this operator loses its main physics motivation, which is to
 create permeable membranes at discontinuity points.
\end{remark}

Motivated by Lemma \ref{caracterization} we are able to obtain a characterization of  the elements of the discrete dual Sobolev space.
\begin{lemma}\label{caracterization discrete}
 Let $f\in H^{-1}_{1,W}(\bb T^d_N)$, the   discrete dual Sobolev space. Then, there exist unique functions $f_k: \frac 1N\bb T^d_N \to \bb R$, $ k=0,1,\ldots, d$ such that
 for all $v:\frac1N\bb T^d_N\to \bb R$, the action of $f$ over $v$ is given by
\begin{align*}
f(v):=  \;\;\; & \<f_0,v\>_N +\sum_{k=1}^d\<f_k, \partial^N_{W_k}v_k\>_{W_k}   \\ 
=\, \frac1{N^d}\sum_{x\in \bb T^d_N} f_0(x/N)v(x/N)+ \frac1{N^{d-1}}&\sum_{k=1}^d\sum_{x\in \bb T^d_N}f_k(x/N)\partial_{W_k}^Nv(x/N)\Big[W((x+ e_k/n) - W(x)\Big].
\end{align*}

Moreover, $\|f\|^2_{H^{-1}_{1,W}(\bb T^d_N)} = \|f_0\|_N^2+\sum_{k=1}^d\|f_k\|_{L^2_{W_k}(\bb T^d_N)}^2$. We denote the functional $f$ by
$$f = f_0 - \sum_{k=1}^d \partial_{x_k}^N f_k.$$
\end{lemma}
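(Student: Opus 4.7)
The plan is to apply the Riesz representation theorem in the finite-dimensional Hilbert space $H_{1,W}(\bb T_N^d)$ and then unpack the resulting inner product into the desired form.

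First, I would observe that since functions live on the finite set $\frac{1}{N}\bb T_N^d$, the space $H_{1,W}(\bb T_N^d)$ is finite-dimensional, hence a Hilbert space, and Riesz's theorem applies: for each $f \in H^{-1}_{1,W}(\bb T_N^d)$, there is a unique $u \in H_{1,W}(\bb T_N^d)$ with
\[
f(v) \;=\; \<u,v\>_{1,W,N} \qquad\text{for all } v:\tfrac{1}{N}\bb T_N^d \to \bb R,
\]
and $\|f\|_{H^{-1}_{1,W}(\bb T^d_N)} = \|u\|_{H_{1,W}(\bb T^d_N)}$.

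Second, I would expand this inner product using its defining formula, obtaining
\[
f(v) \;=\; \<u,v\>_N \;+\; \sum_{k=1}^{d} \<\partial_{W_k}^N u,\, \partial_{W_k}^N v\>_{W_k,N}.
\]
Setting $f_0 := u$ and $f_k := \partial_{W_k}^N u$ for $k=1,\ldots,d$ then gives the claimed representation. To match the explicit summation form in the statement, I would substitute the definitions of the inner products $\<\cdot,\cdot\>_N$ and $\<\cdot,\cdot\>_{W_k,N}$; the factor $[W((x+e_k)/N) - W(x/N)]$ appearing in $\<\cdot,\cdot\>_{W_k,N}$ combines neatly with $\partial_{W_k}^N v$ through telescoping. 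The notational identity $f = f_0 - \sum_k \partial_{x_k}^N f_k$ is then just a restatement via discrete summation by parts on the periodic torus $\bb T_N^d$.

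Third, the norm identity follows directly:
\[
\|f_0\|_N^{2} \;+\; \sum_{k=1}^{d}\|f_k\|_{L^2_{W_k}(\bb T_N^d)}^{2} \;=\; \|u\|_N^{2} \;+\; \sum_{k=1}^{d}\|\partial_{W_k}^N u\|_{W_k,N}^{2} \;=\; \|u\|_{H_{1,W}(\bb T_N^d)}^{2} \;=\; \|f\|_{H^{-1}_{1,W}(\bb T_N^d)}^{2}.
\]
Uniqueness of the canonical representative $(f_0,f_1,\ldots,f_d)$ comes from uniqueness of $u$ in Riesz's theorem together with the fact that, once $u$ is fixed, each $f_k$ is explicitly determined as $\partial_{W_k}^N u$.

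I expect the main obstacle to be purely bookkeeping: making sure the normalizing powers $N^d$ and $N^{d-1}$ line up correctly between $\<\cdot,\cdot\>_N$ and $\<\cdot,\cdot\>_{W_k,N}$, and that the telescoping of $[W((x+e_k)/N) - W(x/N)]$ against $\partial_{W_k}^N v$ reproduces the formula exactly as written. The conceptual core — identifying $H^{-1}_{1,W}(\bb T_N^d)$ with $H_{1,W}(\bb T_N^d)$ via Riesz — is straightforward in this finite-dimensional setting, and is in fact what makes the discrete case sharper than the continuous Lemma \ref{caracterization}, where the decomposition was only characterized up to an infimum.
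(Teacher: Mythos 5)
Your proposal is correct and essentially identical to the paper's proof: the paper obtains the representative $u$ by invoking Lemma \ref{lmdiscreto} with $\lambda=1$ and $A$ the identity (so that the bilinear form $B^N[\cdot,\cdot]$ reduces exactly to the inner product $\<\cdot,\cdot\>_{1,W,N}$), which is precisely your direct application of Riesz representation in the finite-dimensional Hilbert space $H_{1,W}(\bb T_N^d)$, and then sets $f_0=u$, $f_k=\partial_{W_k}^N u$ as you do. The subsequent unpacking of the inner product, the norm identity, and the summation-by-parts notation are the same bookkeeping.
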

\begin{proof}
let $u$ be  the unique weak solution obtained by Lemma \ref{lmdiscreto} when $\lambda = 1$ , $A$ the identidy matrix and, $f\in H^{-1}_{1,W}(\bb T^d_N)$.  In particular, we have $(f,v) = \<u,v\>_{1,W,N}$. Thus, it suffices to consider $f_0 =u$ and $f_k = \partial_{W_k }^N u$, where $k=1,\ldots, d$.
\end{proof}

\subsection{Connections between the discrete and continuous Sobolev spaces}\label{correspondencia}

Let us consider the natural partition of  $\bb T^d$  induced by $\bb T^d_N$. For each $x\in \frac 1N\bb T^d_N$, we denote by $Q_N(x)$ the set
$$Q_N(x)\, =\, \{y\in \bb T^d; \, x_k \leq y_k< x_k +1/N,\,  k=1,2, \ldots, d\, \}\, .$$

Let $u_N: \frac1N\bb T_N^d\to \bb R$ be a mesh function. We will now introduce some interpolation different schemes, that is, procedures to extend $u_N$ from $\frac1N\bb T_N^d$ to the continuous torus $\bb T^d$. The first scheme is the piecewise-constant interpolation given by
$$\widetilde{u}_N(y) = \sum_{x\in\frac 1N \bb T_N^d} u_N(x)\mb 1\{Q_N(x)\}(y).$$ 

However, extend a function $u_N:\frac1N\bb T_N^d\to\bb R$ to $\bb T^d$ in such a way that it belongs to $H_{1,W}(\bb T^d)$ is not straightforward. To do so, we need the definition of $W$-interpolation, which we give below.

Using the standard construction of $d$-dimensional linear interpolation, see for instance \cite{L}, it is possible to define the $W$-interpolation of a function $u_N:\frac1N\bb T_N^d\to \bb R$, with $W(x) = \sum_{k=1}^d W_k(x_k)$ as defined in \eqref{w}. More precisely, let $y\in Q_N(x)$, then the $W$-interpolation of $u_N$, namely, $u_N^\ast$, is given by 
\begin{eqnarray*}
u^\ast_N(y) &=& u_N(x) + \sum_{k=1}^d \partial_{W_k}^N u_N(x)(W_k(y_k)-W(x_k))+\cdots\\
&+&\sum_{k=1}^d \partial_{W_1}^N\cdots\partial_{W_{k-1}}^N\partial_{W_{k+1}}^N\cdots\partial_{W_d}^Nu_N(x) \prod_{\substack{i=1\\i\neq k}}^d(W_i(y_i)-W_i(x_i))\\
&+&  \partial_{W_1}^N\cdots\partial_{W_d}^N u_N(x)\prod_{k=1}^d (W_k(y_k)-W_k(x_k)).
\end{eqnarray*}

Let us consider a new type of interpolation, which are actually $d$ (one for each $m=1,\ldots,d$) interpolations:

\begin{eqnarray*}
u^{(m)}_N(y) &=& u_N(x) + \sum_{\substack{k=1\\k\neq m}}^d \partial_{W_k}^N u_N(x)(W_k(y_k)-W(x_k))+\cdots\\
&+&  \partial_{W_1}^N\cdots\partial_{W_{m-1}}^N\partial_{W_{m+1}}^N\cdots\partial_{W_d}^Nu_N(x) \prod_{\substack{i=1\\i\neq m}}^d (W_i(y_i)-W_i(x_i)).
\end{eqnarray*}

A simple calculation shows that
\begin{equation}\label{deriv-interp}
\frac{\partial u_N^\ast}{\partial W_m}(y) = \left( \partial_{W_m}^Nu_N\right)^{(m)}(y).
\end{equation}

We now establish the connection between the discrete and continuous spaces by showing how a sequence of functions defined in $\frac1N\bb T_N^d$ can converge to a function defined  in $\bb T^d$.

We will say that the sequence of mesh functions $(f_N)_{N\in \bb N}$ in $L^2(\bb T^d_N)$ or
 $L^2_{W_k}(\bb T^d_N)$, $k=1,\ldots, d$, converges strongly (weakly) to the function $f\in L^2(\bb T^d)$ or $L^2_{W_k}(\bb T^d)$, as $N\to \infty$,  if $\widetilde{f_N}\to f$ in $L^2(\bb T^d)$ or $L^2_{W_k}(\bb T^d)$ strongly (weakly), respectively.
Similarly,  a sequence $(u_N)_{N\in \bb N}$ in $H_{1, W}(\bb T^d_N)$ converges strongly (weakly) to $u\in H_{1, W}(\bb T^d)$ if $u_N^* \to u$ strongly (weakly) in $ H_{1, W}(\bb T^d)$.

With respect to convergence of functionals, we say that a sequence of functionals $f_N \in H^{-1}_W(\bb T_N^d)$, with representation
$$f_N = f_{0,N} - \sum_{k=1}^d\partial_{x_k}^N f_{k,N}$$
converge weakly (resp. strongly) to $f\in H^{-1}_W(\bb T^d)$ if the sequence of functionals $F_N$ given by
$$F_N = \widetilde{f}_{0,N} - \sum_{k=1}^d \partial_{x_k}\widetilde{f}_{k,N}$$
converges weakly (resp. strongly) to $f$ in $H^{-1}_W(\bb T^d)$.

\begin{lemma}\label{fraca-interp}
Let $u_N: \frac1N\bb T^d_N\to \mathbb{R}$ be a sequence of functions such that:
\begin{itemize}

\item There exists a constant $C>0$ satisfying   
\begin{equation}\label{cota1}
\|u_N\|_{L²(\bb T_N^d)}^2 =  \frac 1{N^d}\sum_{x\in \bb T^d_n}u_N^2(x/N)\leq C,
\qquad \text{for all}\,\,\, N\geq 1,
\end{equation}
 and, one of the sequences $(u_N^\ast)$, $(\widetilde{u}_N)$ or $(u_N^{(m)})$ (for some $m=1,\ldots,d$) converges weakly in $L^2(\bb T^d)$ to a function $u:\bb T^d\to \bb R$ as $N\to\infty$. Then, the others also converge to $u$ in the same manner.

\item For $j\in \{1,2,\ldots, d\}$, there exists a constant $C_j>0$ satisfying
\begin{equation}\label{cota2}
|u_N\|_{L^2_{W_j}(\bb T_N^d)}^2 =  \frac 1{N^{d-1}}\sum_{x\in \bb T^d_n}u_N^2(x/N)\big[W((x+e_j)/N) - W(x/N)\big]\leq C_j \qquad \text{for all}\,\,\, N\geq 1
\end{equation}
  and, one of the sequences $(u_N^\ast)$, $(\widetilde{u}_N)$ or $(u_N^{(m)})$ (for some $m=1,\ldots,d$) converges weakly in $L^2_{W_j}(\bb T^d)$ to a function $u:\bb T^d\to \bb R$ as $N\to\infty$. Then, the
others also converge to $u$ in the same manner.
\end{itemize}
\end{lemma}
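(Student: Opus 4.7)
The plan is to show that the differences $u_N^\ast - \widetilde{u}_N$ and $u_N^{(m)} - \widetilde{u}_N$ tend weakly to zero in the ambient space ($L^2(\bb T^d)$ or $L^2_{W_j}(\bb T^d)$). Once this is in hand, weak convergence of any one of the three interpolations transfers to the others with the same limit. Using the representation of $u_N^\ast$ as a convex combination of the values of $u_N$ at the $2^d$ corners of the cell $Q_N(x)$, with weights in $[0,1]$ determined by $W_k$, one immediately obtains $\|u_N^\ast\|_{L^2(\bb T^d)}^2 \le 2^d \|u_N\|_{L^2(\bb T_N^d)}^2$, and analogous (even easier) bounds for $\widetilde{u}_N$ and $u_N^{(m)}$; thus \eqref{cota1} (respectively \eqref{cota2}) guarantees uniform boundedness of the three interpolations in the ambient space, reducing the problem to testing the differences against a dense class of smooth functions $\phi$, for example $\mf D_W(\bb T^d)$, which is dense by Proposition \ref{d1}.

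Next, I would use the cell-by-cell expansion
$$u_N^\ast(y) - \widetilde{u}_N(y) \;=\; \sum_{\emptyset \neq S \subseteq \{1,\ldots,d\}} \Bigl(\prod_{k \in S}\partial_{W_k}^N\Bigr) u_N(x) \prod_{k \in S} \bigl(W_k(y_k) - W_k(x_k)\bigr), \quad y \in Q_N(x),$$
together with the analogous expression for $u_N^{(m)} - \widetilde{u}_N$, in which the index set is restricted to $S$ with $m \notin S$. To prove $\int(u_N^\ast-\widetilde{u}_N)\phi\,dy \to 0$ for smooth $\phi$, I integrate each summand on $Q_N(x)$ and rewrite the combination $\bigl(\prod_{k\in S}\partial_{W_k}^N u_N\bigr)(x)\prod_{k \in S}\Delta_k^N W(x_k)$, where $\Delta_k^N W(x_k) = W_k((x_k+1)/N)-W_k(x_k/N)$, as an iterated finite difference of $u_N$ whose $L^2(\bb T_N^d)$-norm is bounded by $2^{|S|}\|u_N\|_{L^2(\bb T_N^d)}$. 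A discrete summation by parts in each $x_k$, $k\in S$, then shifts these differences onto the smooth test weight, producing a factor of order $1/N$ per step (arising from $\partial_{x_k}$ of $\phi$), while the Cauchy--Schwarz inequality bounds the remaining discrete sum by a constant depending only on $\phi$ and on the constant in \eqref{cota1}.

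The main technical obstacle will be controlling the one-cell weight produced by integrating $W_k(y_k) - W_k(x_k)$ against $\phi$, namely $\eta_k^N(x_k) := \int_{x_k}^{x_k+1/N}(W_k(s)-W_k(x_k))\,ds$, which satisfies $0 \le \eta_k^N(x_k) \le \Delta_k^N W(x_k)/N$ but is non-smooth in $x_k$: at discontinuity points of $W_k$ the weight $\eta_k^N$ cannot be Taylor-expanded. My strategy is to rely on the pointwise bound $N\eta_k^N \le \Delta_k^N W$ and to exploit that $W_k$ is c\`adl\`ag with at most countably many jumps, so the set of cells on which $\Delta_k^N W$ fails to be small has negligible Lebesgue measure as $N \to \infty$. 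The proof of the second bullet is structurally identical: replace Lebesgue measure by $d(x^j\otimes W_j)$ everywhere, so that the discrete cell masses $\Delta_j^N W/N^{d-1}$ coincide with the continuous ones, test against $\mf D_W(\bb T^d)$-functions (dense in $L^2_{x^j\otimes W_j}(\bb T^d)$ by Proposition \ref{d1}), and conclude with the same summation-by-parts argument.
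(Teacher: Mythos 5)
Your plan follows the paper's high-level strategy: establish uniform boundedness of the interpolations from the cell-value comparison, reduce to testing against a dense class of test functions, expand $u_N^\ast - \widetilde u_N$ cell-by-cell via the explicit interpolation formulas, perform summation by parts, and finish with Cauchy--Schwarz. The paper deviates in two respects. First, it tests against the piecewise-constant discretization $\widetilde\Phi_N$ of $\Phi \in C^\infty(\bb T^d)$ and passes to $\Phi$ at the very end using $\widetilde\Phi_N \to \Phi$ uniformly, rather than integrating against $\phi$ directly. Second, and more importantly, it performs a \emph{single} summation by parts per term, transferring exactly one $W$-difference from $u_N$ to $\Phi$.

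This second point exposes a genuine gap in your proposal. After integrating over $Q_N(x)$, the summand indexed by $S$ carries the weight $\phi(x)\prod_{k\in S}\eta_k^N(x_k)\,N^{-(d-|S|)}$. Writing $\eta_k^N = r_k^N\,\Delta_k^N W$ with $r_k^N \in [0,1/N]$, your ``iterated finite difference'' rewriting gives $\phi(x)\,\Delta^+_{i_1}\cdots\Delta^+_{i_s}u_N(x)\,\prod_{k\in S}r_k^N(x_k)\,N^{-(d-|S|)}$. Summing by parts in \emph{every} direction $k\in S$ pushes the differences onto the full weight $\phi(x)\prod r_k^N(x_k)$, not just onto $\phi$; by the discrete product rule, each step either differences $\phi$ (gaining a genuine $1/N$) or differences $r_k^N$ (gaining nothing, because $\bigl|\Delta^- r_k^N\bigr|$ is of the same order $1/N$ as $r_k^N$ itself). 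The cross terms in which all the differences land on the $r_k^N$'s therefore remain $O(1)$ after the final sum, so the claimed $o(1)$ bound does not follow from the accounting ``$1/N$ per step.'' The proposed measure-theoretic fix --- that the set of cells with large $\Delta_k^N W$ is sparse --- does not cure this: the discrete derivative of $r_k^N$ can be of order $1/N$ even on cells where $\Delta_k^N W$ is small. The paper avoids the compounding entirely by doing only one summation by parts: the single $W$-difference transferred to $\Phi$ is paired with one factor $\Delta_{i_1}^N W$ coming from the cell integral, producing $N^{-1}\partial_{\bar x_{i_1}}^N\Phi$ and hence the needed $1/N$, while the remaining $r_k^N$ factors are simply bounded and never differenced.
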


\begin{proof}

We start with the $L^2(\bb T^d)$ case. Suppose $(u_N^\ast)$ converges weakly to $u$ in $L^2(\bb T^d)$. We will prove that $(\widetilde{u_N})$ also converges to $u$. The remaining cases can be handled in a similar manner. Note that \eqref{cota1} implies the uniform boundedness of the norms $\|u_N^\ast\|_{L^2(\bb T^d)}$, $\|\widetilde{u}_N\|_{L^2(\bb T^d)}$ and $\|u_N^{(m)}\|_{L^2(\bb T^d)}$ because the values of the functions $u_N^\ast$, $\widetilde{u}_N$ and $u_N^{(m)}$ in each cell $Q_N(x)$, $x\in \bb T^d_N$, lies between the largest and smallest values of $u_N$ at  the vertices of $Q_N(x)$. Hence, each of theses sequences is weakly compact in $L^2(\bb T^d)$. From this, to prove the convergence we are interested, it is enough to prove the convergence $\int_{\bb T^d}\widetilde{u}_N\Phi\; dy \to \int_{\bb T^d}u\Phi\; dy $ for all $\Phi \in C^\infty( \bb T^d)$ (notice that we have, by assumption, that $\int_{\bb T^d}u_N^\ast\Phi\; dy \to \int_{\bb T^d}u\Phi\; dy$).

Let $\widetilde{\Phi}_N$ be the piecewise-constant function  which coincides with $\Phi$ at the lattice-points  $\frac 1N\bb T^d_N$. By uniform convergence of $\widetilde{\Phi}_N$ to $\Phi$, we have $\int_{\bb T^d}u_N^\ast\widetilde{\Phi}_N\; dy \to \int_{\bb T^d}u\Phi \; dy$.  Let us consider
\begin{equation*}
R_N:= \int_{\bb T^d} \big(u_N^\ast - \widetilde{u}_N\big)\widetilde{\Phi}_N\,dy = 
\sum_{x\in \frac1N\bb T^d_N}\Phi(x)\int_{Q_N(x)}\big(u_N^\ast - \widetilde{u}_N\big)\, dy.
\end{equation*}
By explicit form of the interpolations, we have
\begin{eqnarray*}
R_N &=& \sum_{x\in \frac1N\bb T^d_N}\Phi(x)\Big\{  \sum_{k=1}^d \partial_{W_k}^N u_N(x)\int_{Q_N(x)}(W_k(y_k)-W(x_k))\, dy+\cdots\\
&+&\sum_{k=1}^d \partial_{W_1}^N\cdots\partial_{W_{k-1}}^N\partial_{W_{k+1}}^N
\cdots\partial_{W_d}^Nu_N(x)\int_{Q_N(x)} \prod_{\substack{i=1\\i\neq k}}^d(W_i(y_i)-W_i(x_i))\, dy\\
&+&  \partial_{W_1}^N\cdots\partial_{W_d}^N u_N(x)\int_{Q_N(x)}\prod_{i=1}^d (W_i(y_i)-W_i(x_i))\, dy   \Big\}\, .
\end{eqnarray*}

After a summation by parts we transfer in each term inside the braces one of the $W$-differences from $u_N$ to $\Phi$ and thus, the resulting expression is
 \begin{align*}
R_N = -\; \sum_{x\in \frac1N\bb T^d_N}\Big\{  \sum_{k=1}^d u_N(x)&\partial_{\bar{W}_k}^N\Phi (x)\, \int_{Q_N(x)}(W_k(y_k)-W(x_k))\, dy+\cdots\\
&+  \partial_{\bar{W}_1}^N\Phi (x)\, \partial_{W_2}^N\ \cdots\partial_{W_d}^N u_N(x)\prod_{i=1}^d\int_{x_i}^{x_i+ \frac 1N} (W_i(y_i)-W_i(x_i))\, dy_i   \Big\}\, ,
\end{align*}
where
\begin{equation*}%\label{odw}
\partial^N_{\bar{W}_k}f\left(\frac xN\right)\;=\; \frac{f\left(\frac xN\right) - f\left(\frac{x-e_k}{N}\right) }{W\left(\frac{x+e_k}{N}\right) -W\left(\frac xN\right)},\quad\hbox{for~~}x\in\bb T^d_N,
\end{equation*}
and  the exchange of the order of integration is due to Fubini's Theorem. Note that
\begin{align*}
\Big( \partial_{\bar{W}_{i_1}}^N\Phi (x)&\partial_{W_{i_2}}^N\ \cdots\partial_{W_{i_s}}^N u_N(x)\prod_{j=1}^s\int_{x_{i_j}}^{x_{i_j}+1/N} [W_{i_j}(y_{i_j})-W_{i_j}(x_{i_j})]\, dy_i\Big)^2\, \leq \\
\Big(&\frac{1}{N^d}\partial_{\bar{W}_{i_1}}^N\Phi (x)\partial_{W_{i_2}}^N\ \cdots\partial_{W_{i_s}}^N u_N(x)\prod_{j=1}^s[W_{i_j}(x_{i_j}+1/N)-W_{i_j}(x_{i_j})]\Big)^2\; = \\
\Big(\frac{1}{N^d}\partial_{\bar{W}_{i_1}}^N\Phi &(x)[W_{i_1}(x_{i_1}+1/N)-W_{i_1}(x_{i_1})]    \Big)^2\Big(
\partial_{W_{i_2}}^N\ \cdots\partial_{W_{i_s}}^N u_N(x)\prod_{j=2}^s[W_{i_j}(x_{i_j}+1/N)-W_{i_j}(x_{i_j})]\Big)^2 = \\
&\Big(\frac{1}{N^d}\partial_{\bar{x}_{i_1}}^N\Phi (x)\frac{1}{N}    \Big)^2\Big( C\sum_{i=1}^du_N^2(x+ e_i/N)\Big)^2 
\end{align*}
where in the last equality the constant $C>0$ does not depend on $N$. 

In this way, $|R_N|$ is bounded by a finite sum (that depends on $d$)  of terms of the form
\begin{align*}
\frac{C}{N}\frac{1}{N^d}\sum_{x\in \frac1N\bb T^d_N} |\partial_{\bar{x}_{i_1}}^N\Phi (x)|  \big( \sum_{i=1}^du_N^2(x+& e_i/N)\big)^{1/2} \leq 
\frac{C}{N}\frac{1}{N^d}\Big(\sum_{x\in \frac1N\bb T^d_N} \partial_{\bar{x}_{i_1}}^N\Phi^2 (x)\Big)^{1/2} \Big(\sum_{x\in \frac1N\bb T^d_N}  \sum_{i=1}^du_N^2(x+ e_i/N)\Big)^{1/2} \\
\leq\, \frac{Cd}{N}\Big(\frac{1}{N^d}&\sum_{x\in \frac1N\bb T^d_N}  \partial_{\bar{x}_{i_1}}^N\Phi^2 (x)\Big)^{1/2} \Big(\frac{1}{N^d}\sum_{x\in \frac1N\bb T^d_N}  u_N^2(x)\Big)^{1/2},
\end{align*}
where the previous estimates  follow from Cauchy-Schwarz inequality. By hypothesis \eqref{cota1} and the fact that $\Phi \in C^\infty(\bb T^d)$ the sums inside the brackets are bounded. Thus, $R_N$ goes to zero as $N\to \infty$. Therefore, we have shown that
$$\int_{\bb T^d}\widetilde{u}_N\widetilde{\Phi}_N\; dx\to \int_{\bb T^d}u\Phi\; dx.$$

Hence
$$ \int_{\bb T^d}\widetilde{u}_N\Phi\; dx = 
\int_{\bb T^d}\widetilde{u}_N\widetilde{\Phi}_N\; dx  + 
\int_{\bb T^d}\widetilde{u}_N[\Phi - \widetilde{\Phi}_N]\; dx\to \int_{\bb T^d}u\Phi\; dx.$$

This concludes the proof for the first case. 
Now, let us consider the  $L^2_{W_j}(\bb T^d)$ case.  
The proof is similar to previous case. To keep notation simple, fix, without loss of generality, $j=1$. The beginning of the proof is exactly the same as in the previous case, one just needs to replace the Lebesgue measure $dy$ by the product measure $d(y_1\otimes W_1)$. The essential difference comes in the estimation of the term 

\begin{equation*}
\Big( \partial_{\bar{W}_{i_1}}^N\Phi (x)\partial_{W_{i_2}}^N\ \cdots\partial_{W_{i_s}}^N u_N(x)\int_{Q_N(x)}\prod_{j=1}^s [W_{i_j}(y_{i_j})-W_{i_j}(x_{i_j})]\, d(y_1\otimes W_1)\Big)^2.
\end{equation*}
We will now provide this estimation. The previous term is bounded above by
\begin{align*}
 \Big(\frac{1}{N^{d-1}}\partial_{\bar{W}_{i_1}}^N\Phi (x)\partial_{W_{i_2}}^N\ \cdots\partial_{W_{i_s}}^N& u_N(x)[W_1(x_{1}+1/N)-W_{1}(x_1)]\prod_{j=1}^s[W_{i_j}(x_{i_j}+1/N)-W_{i_j}(x_{i_j})]\Big)^2\;  \\
=\Big(\frac{[W_1(x_{1}+\frac 1N)-W_{1}(x_1)]}{N^{d-1}}&\partial_{\bar{W}_{i_1}}^N\Phi (x)[W_{i_1}(x_{i_1}+\frac 1N)-W_{i_1}(x_{i_1})]    \Big)^2  \, \times \\
 \times  \, \Big(\partial_{W_{i_2}}^N\,&  \cdots \partial_{W_{i_s}}^N u_N(x)\prod_{j=2}^s[W_{i_j}(x_{i_j}+\frac 1N)-W_{i_j}(x_{i_j})]\Big)^2  \\
\leq\Big(\frac{[W_1(x_{1}+\frac 1N)-W_{1}(x_1)]}{N^{d-1}}&\partial_{\bar{x}_{i_1}}^N\Phi (x)\frac{1}{N}    \Big)^2\Big( C_1\sum_{i=1}^du_N^2(x+ e_i/N)\Big)^2 \, .
\end{align*}

So the new term $|R_N|$ is, again, bounded by a finite sum (that depends on $d$)  of terms of the form
\begin{equation*}
\frac{C_1}{N}\Big(\frac 1{N^{d-1}}\sum_{x\in \frac1N\bb T^d_N}[W_1(x_{1}+\frac 1N)-W_{1}(x_1)] \partial_{\bar{x}_{i_1}}^N\Phi^2 (x)\Big)^{1/2} \Big(\frac{1}{N^{d-1}}\sum_{x\in \frac1N\bb T^d_N} [W_1(x_{1}+\frac 1N)-W_{1}(x_1)] u_N^2(x)\Big)^{1/2}\, .
\end{equation*}
By hypothesis \eqref{cota2} and the fact that $\Phi \in C^\infty(\bb T^d)$ the sums inside the brackets are bounded and thus $R_N$ goes to zero as $N\to \infty$. Therefore, we have shown that
$$\int_{\bb T^d}\widetilde{u}_N\widetilde{\Phi}_N\; d(y_1\otimes W_1)\to \int_{\bb T^d}u\Phi\; d(y_1\otimes W_1).$$

Hence
$$ \int_{\bb T^d}\widetilde{u}_N\Phi\; d(y_1\otimes W_1) = 
\int_{\bb T^d}\widetilde{u}_N\widetilde{\Phi}_N\; d(y_1\otimes W_1)  + 
\int_{\bb T^d}\widetilde{u}_N[\Phi - \widetilde{\Phi}_N]\; d(y_1\otimes W_1)\to \int_{\bb T^d}u\Phi\; d(y_1\otimes W_1).$$
This concludes the proof.

\end{proof}
We will now state and prove the strong counterpart to the above Lemma:

\begin{lemma}\label{strong-interp}
Let $u_N:\mathbb{\bb T}^d\to \mathbb{R}$ be a sequence of functions and $C> 0$ be a constant such that
\begin{equation}\label{D}
\|u_N\|_{H_{1,W}(\bb T_N^d)}\leq C \qquad \text{for all}\,\,\, N\geq 1\, .
\end{equation}
Then, if one of the sequences $(u_N^\ast)$, $(\widetilde{u}_N)$ or $(u_N^{(m)})$ (for some $m=1,\ldots,d$) converges in $L^2(\bb T^d)$ to a function $u:\bb T^d\to \bb R$ as $N\to\infty$, then the
others also converge to $u$ in the same manner.

\end{lemma}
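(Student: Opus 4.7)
The plan is to prove that the three interpolations differ pairwise by a quantity vanishing in $L^2(\bb T^d)$ as $N\to\infty$; the triangle inequality will then propagate the strong convergence of any one to the other two. Since all three pairwise differences are handled by the same argument, I focus on $\|u_N^\ast-\widetilde u_N\|_{L^2(\bb T^d)}\to 0$. On each cell $Q_N(x)$, the explicit $W$-interpolation formula expresses this difference as
\begin{equation*}
u_N^\ast(y)-\widetilde u_N(y)\;=\;\sum_{\emptyset\ne S\subseteq\{1,\dots,d\}}\Bigl(\prod_{j\in S}\partial_{W_j}^N\Bigr)u_N(x)\;\prod_{j\in S}\bigl(W_j(y_j)-W_j(x_j)\bigr).
\end{equation*}
Squaring and using $|\sum_S a_S|^2\le 2^d\sum_S a_S^2$, the pointwise bound $|W_j(y_j)-W_j(x_j)|\le\Delta_j(x):=W_j(x_j+1/N)-W_j(x_j)$ on $Q_N(x)$, and integrating (which contributes $1/N^d$) reduces everything to showing, for every non-empty $S$,
\begin{equation*}
\frac{1}{N^d}\sum_x\bigl(D_S u_N(x)\bigr)^2\;=\;O(1/N),\qquad D_S u_N(x)\,:=\,\Bigl(\prod_{j\in S}\partial_{W_j}^N\Bigr)u_N(x)\prod_{j\in S}\Delta_j(x).
\end{equation*}

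A direct calculation shows that $D_S u_N(x)$ is precisely the $|S|$-fold alternating sum of $u_N$ at the $2^{|S|}$ vertices of $Q_N(x)$ generated by shifts in the directions of $S$, and satisfies the telescoping identity $D_S u_N(x)=D_{S\setminus\{k\}}u_N(x+e_k/N)-D_{S\setminus\{k\}}u_N(x)$ for any $k\in S$. For the base case $S=\{k\}$,
\begin{equation*}
\frac{1}{N^d}\sum_x (D_{\{k\}}u_N(x))^2\;=\;\frac{1}{N^d}\sum_x(\partial_{W_k}^Nu_N(x))^2\Delta_k(x)^2\;\le\;\frac{\Delta_{k,\max}}{N}\|\partial_{W_k}^N u_N\|_{L^2_{W_k}(\bb T_N^d)}^2\;\le\;\frac{\Delta_{k,\max}\,C}{N},
\end{equation*}
where $\Delta_{k,\max}:=\max_x\Delta_k(x)\le W_k(1)-W_k(0)$ is finite by the periodic-increment hypothesis on $W_k$, and $C$ is the constant from \eqref{D}. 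For $|S|\ge 2$, the telescoping identity together with $(a-b)^2\le 2(a^2+b^2)$ and shift-invariance of the lattice sum yield $\sum_x(D_S u_N(x))^2\le 4\sum_x(D_{S\setminus\{k\}}u_N(x))^2$, so induction on $|S|$ propagates the $O(1/N)$ rate to every non-empty $S$.

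Summing the finitely many $O(1/N)$ contributions gives $\|u_N^\ast-\widetilde u_N\|_{L^2(\bb T^d)}^2=O(1/N)\to 0$. The differences $u_N^\ast-u_N^{(m)}$ and $u_N^{(m)}-\widetilde u_N$ are the partial sums of the same expansion restricted, respectively, to $S\ni m$ and to non-empty $S\not\ni m$, hence are $O(1/N)$ by exactly the same estimates. The triangle inequality then concludes. The main obstacle is recognizing the telescoping identity that converts the product $\bigl(\prod_{j\in S}\partial_{W_j}^N u_N\bigr)(x)\prod_j\Delta_j(x)$ into an iterated discrete difference of $u_N$-values, since this is what allows the single-order $L^2_{W_k}$-bound in the hypothesis to control every higher-order cross term with the uniform $1/N$-decay, rather than the merely bounded estimate that the weak version (Lemma \ref{fraca-interp}) needed.
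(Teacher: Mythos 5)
Your proof is correct, but it takes a genuinely different route from the paper's. You expand $u_N^\ast-\widetilde u_N$ directly into the $2^d-1$ terms indexed by non-empty $S\subseteq\{1,\dots,d\}$, identify each term $D_S u_N(x)=\bigl(\prod_{j\in S}\partial_{W_j}^N\bigr)u_N(x)\prod_{j\in S}\Delta_j(x)$ as an $|S|$-fold alternating finite difference, and then propagate the single-order $L^2_{W_k}$ bound to every higher-order cross term via the telescoping identity $D_S u_N(x)=D_{S\setminus\{k\}}u_N(x+e_k/N)-D_{S\setminus\{k\}}u_N(x)$, $(a-b)^2\le 2(a^2+b^2)$, shift-invariance of the lattice sum, and induction on $|S|$. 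The paper instead works with $u_N^\ast-u_N^{(m)}$ and uses a structural factorization: on $Q_N(x)$ this difference equals $\bigl(W_m(y_m)-W_m(x_m)\bigr)\cdot(\partial_{W_m}^N u_N)^{\ast'}$, where $(\partial_{W_m}^N u_N)^{\ast'}$ is the $(d-1)$-dimensional $W$-interpolation of the single discrete derivative; since a multilinear interpolant attains its extrema at vertices, the whole difference is bounded on the cell by $\Delta_m(x)\max_{z}|\partial_{W_m}^N u_N(z)|$ and the $O(1/N)$ rate follows in one stroke. Your approach is more elementary and combinatorial, avoids the factorization observation, and treats all three pairwise differences as sub-sums of the same expansion, which makes the paper's ``the remaining cases are analogous'' assertion fully transparent; the paper's approach is slicker and keeps the constants tighter by avoiding the $4^{|S|-1}$ losses from the induction. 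One small slip: in your base-case estimate the final bound should read $\Delta_{k,\max}\,C^2/N$ rather than $\Delta_{k,\max}\,C/N$, since the hypothesis \eqref{D} bounds the norm, not its square; this does not affect the conclusion.
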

\begin{proof}
Let us consider the case $u_N^* \to u$ in $L^2(\bb T^d)$ and we will prove that $u^{(m)}_N\to u$ 
in $L^2(\bb T^d)$ .  The remaining cases can be proved analogously. Recall the notation introduced at the beginning of this subsection and 
note that 
\begin{equation}\label{RN}
R_N = \int_{\bb T^d} (u_N^* - u^{(m)}_N)^2\,dy \leq \sum_{x\in \frac1N\bb T^d_N} \int_{Q_N(x)} (u_N^* - u^{(m)}_N)^2\,dy \, .
\end{equation}

A simple computation show that the quantity $|u_N^* (y)- u^{(m)}_N(y)|$ assumes its largest value on $\bar{Q}(x)$ at one of the vertices of $Q_N(x)$ and that this value is equal to 
$\big(W_m(x_m +1/N) - W_m(x_m)\big)|\partial_{W_m}^{(m)}u_N |$. That is,
\begin{equation}
max_{y\in \bar Q_N(x)} |u_N^* (y)- u^{(m)}_N(y)| = \big(W_m(x_m +1/N) - W_m(x_m)\big)\,max_{y;\, |y-x|=1/N,\; y_m = x_m}|\partial_{W_m}^{N}u_N(y)|.
\end{equation}

In fact, 
\begin{align*}
u^\ast_N(y) - u_N^{(m)}(y) =&     \Big(W_m(y_m) - W_m(x_m)\Big)\Big\{\partial_{W_m}^{N} u_N(x) + \sum_{j=1; j\neq m}^d \partial_{W_j}^N \partial_{W_m}^{N}u_N(x)\big(W_j(y_j)-W(x_j)\big)+\cdots\\
+&\,\sum_{j=1; j\neq m}^d \partial_{W_1}^N\cdots\partial_{W_{j-1}}^N\partial_{W_{j+1}}^N\cdots\partial_{W_d}^N\partial_{W_m}^{N}u_N(x) \prod_{\substack{i=1\\i\neq j}}^d\big(W_i(y_i)-W_i(x_i)\big)\\
+& \,  \partial_{W_1}^N\cdots\partial_{W_d}^N \partial_{W_m}^{N}u_N(x)\prod_{i=1;\, i\neq m}^d \big(W_i(y_i)-W_i(x_i)\big)\Big\}\,\, =\\
 \Big(&W_m(y_m) - W_m(x_m)\Big) (\partial_{W_m}^{N}u_N)^{\ast '}(y_1,\cdots , y_{m-1},y_{m+1},\cdots, y_d)\, 
\end{align*}
where $(\partial_{W_m}^{N}u_N)^{\ast '}$ denotes the $W-$ interpolation of the function $\partial_{W_m}^{N}u_N$  in the $(d-1)$-dimensional setup.

In this way, $R_N$ is   bound above  by
\begin{align*}
\sum_{x\in \frac1N\bb T^d_N}  \int_{\bar Q_N(x)} \Big(\big[W_m(x_m +1/N) - W_m(x_m)\big]max_{z;\, |z-x|=1/N,\; z_m = x_m}|\partial_{W_m}^{N}u_N(z)|\Big)^2\, dy = \\
\sum_{x\in \frac1N\bb T^d_N}  \frac 1{N^d}\Big(\big[W_m(x_m +1/N) - W_m(x_m)\big]max_{z;\, |z-x|=1/N,\; z_m = x_m}|\partial_{W_m}^{N}u_N(z)|\Big)^2\leq \\
C_1\frac{ W_m(1)-W_m(0)}{N}\sum_{z\in \bb T^d_N}\Big(\partial_{W_m}^{N}u_N(z)\Big)^2 \big[W_m(z_m +1/N) - W_m(z)\big]\frac 1{N^{d-1}} \; = \\
C_1\frac{ W_m(1)-W_m(0)}{N}\|\partial_{W_m}^Nu_N\|^2_{W_m, N}\, \leq\, 
C_1\frac{ W_m(1)-W_m(0)}{N}\|u_N\|^2_{H_{1, W (\bb T^d_N)}}
\end{align*}
where in the previous expression, $C_1$ is a constant that depends on $d$ (one may take, for instance, $C_1=2^d$). So, by \eqref{D},  $\lim_{N\to \infty} R_N = 0$.

Thus, it follows that
$$\int_{\bb T^d} (u^{(m)}_ N - u)^2\, dy \leq 2 \int_{\bb T^d} ( u^*_N - u^{(m)}_ N)^2 +( u^*_N - u)^2  \, dy \to 0,$$
as $N\to \infty$. This concludes the proof.

\end{proof}
Finally, we have the following Lemma regarding strong and weak compactness:

\begin{lemma}\label{compact-interp}
Let $u_N:\mathbb{\bb T}^d\to \mathbb{R}$ be a sequence of functions such that there exists some constant $C\geq 0$
such that, for every $N\geq 1$,
\begin{equation}\label{L}
\|u_N\|_{H_{1,W}(\bb T_N^d)}\leq C.
\end{equation}
Then, the sequence $(u_N^\ast)$ forms a uniformly bounded set in $H_{1,W}(\bb T^d)$, which is therefore strongly precompact in $L^2(\bb T^d)$ and weakly precompact in $H_{1,W}(\bb T^d)$.
\end{lemma}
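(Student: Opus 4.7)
My plan is to reduce the statement to the single estimate
$$\|u_N^\ast\|_{H_{1,W}(\bb T^d)} \leq C\,\|u_N\|_{H_{1,W}(\bb T_N^d)},$$
with $C$ independent of $N$. Granted this, the hypothesis \eqref{L} bounds $(u_N^\ast)$ uniformly in the reflexive Hilbert space $H_{1,W}(\bb T^d)$, so weak precompactness in $H_{1,W}(\bb T^d)$ is immediate; strong precompactness in $L^2(\bb T^d)$ then follows from the Rellich--Kondrachov-type compact embedding $H_{1,W}(\bb T^d) \hookrightarrow L^2(\bb T^d)$ already established in \cite{SV}.

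Both halves of the key estimate exploit the same structural fact: on each cell $Q_N(x)$ the $W$-interpolation $u_N^\ast$ is multilinear in the variables $W_1(y_1),\ldots,W_d(y_d)$ and agrees with $u_N$ at every vertex of $Q_N(x)$. Setting $\lambda_k = \lambda_k(y_k) = [W_k(y_k) - W_k(x_k)]/[W_k(x_k + 1/N) - W_k(x_k)] \in [0,1)$, this rewrites
$$u_N^\ast(y) \;=\; \sum_{\epsilon \in \{0,1\}^d} u_N\bigl((x+\epsilon)/N\bigr)\prod_{k=1}^d \lambda_k^{\epsilon_k}(1-\lambda_k)^{1-\epsilon_k},$$
an explicit convex combination of the $2^d$ vertex values. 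Jensen's inequality transfers the bound to $|u_N^\ast|^2$; integrating over $Q_N(x)$, summing in $x$, and using that each grid point is a vertex of exactly $2^d$ cells, I obtain $\|u_N^\ast\|_{L^2(\bb T^d)}^2 \leq 2^d\|u_N\|_{L^2(\bb T_N^d)}^2$. The derivative part works identically by \eqref{deriv-interp}: that identity identifies $\partial_{W_m} u_N^\ast$ with $(\partial_{W_m}^N u_N)^{(m)}$, which on $Q_N(x)$ is itself a convex combination over the $2^{d-1}$ vertices with $v_m = x_m$. Jensen together with integration against $d(x^m \otimes W_m)$ then produces exactly the factor $W_m(x_m+1/N)-W_m(x_m)$ appearing in the discrete norm, yielding
$$\|\partial_{W_m} u_N^\ast\|_{L^2_{x^m \otimes W_m}(\bb T^d)}^2 \;\leq\; 2^{d-1}\|\partial_{W_m}^N u_N\|_{L^2_{W_m}(\bb T_N^d)}^2.$$

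The only real obstacle, as far as I can see, is careful bookkeeping: one must verify that \eqref{deriv-interp} does deliver the generalized weak derivative in the sense of \eqref{eq22} (immediate, since $u_N^\ast$ is piecewise classically differentiable with the stated derivative, so the integration-by-parts identity reduces to a discrete summation by parts of the kind already used in Lemma \ref{fraca-interp}), and one must keep track of the combinatorial and measure factors so that the discrete weights $W_m(x_m+1/N)-W_m(x_m)$ cancel against the continuous ones. Summing the two estimates over $m$ yields the desired uniform bound in $H_{1,W}(\bb T^d)$, and the precompactness conclusions follow as described above.
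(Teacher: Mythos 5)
Your proof is correct and follows essentially the same route as the paper: bound $\|u_N^\ast\|_{H_{1,W}(\bb T^d)}$ cell-by-cell using the fact that the $W$-interpolation (resp.\ its $W_m$-derivative, via \eqref{deriv-interp}) is on each $Q_N(x)$ a convex combination of the $2^d$ (resp.\ $2^{d-1}$) vertex values, then invoke the $H_{1,W}\hookrightarrow L^2$ compact embedding from \cite{SV}. Your constants ($2^d$ and $2^{d-1}$) are in fact a hair sharper than the paper's uniform $2^d$, and the concern you flag about checking that $\partial_{W_m}u_N^\ast$ really is the generalized weak derivative in the sense of \eqref{eq22} is legitimate but is left implicit in the paper as well, so you are at the same level of rigor.
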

\begin{proof}

To prove the lemma, it is enough to show that \eqref{L} implies the uniform  boundedness of $\|u^*_N\|_{H_{1,W}(\bb T^d)}$ and then we apply \cite[Proposition 2.9]{ SV}.

Note that
\begin{align*}
\|u^*_N\|_{L^2(\bb T^d)}^2 \, = \,    \int_{\bb T^d} (u^*_N(y))^2\; dy  =& \sum_{x\in \frac1N\bb T^d_N}  \int_{Q_N(x)} (u^*_N(y))^2\; dy \leq \\
\sum_{x\in \frac1N\bb T^d_N}  \int_{Q_N(x)} \sum_{z \in \frac 1N \bb T^d_N;\,\, |z-x|=1/N}   (u_N(z))^2\; dy\; 
\leq\; &\frac{2^d}{N^d}\sum_{x\in \bb T^d_N}u_N^2(x) = 2^d\|u_N\|_{L^2(\bb T^d_N)}^2\;, 
\end{align*}
where $z$, in the previous expression, represents the summation over all vertices of the cell $Q_N(x)$. Further, for  $m=1,2, \ldots, d,$ we have, by an explicit computation,
\begin{align*}
\partial_{W_m} u^*_N(y)\,\, =&\,\,
\partial_{W_m}^{N} u_N(x) + \sum_{j=1; j\neq m}^d \partial_{W_j}^N \partial_{W_m}^{N}u_N(x)\big(W_j(y_j)-W(x_j)\big)+\cdots\\
+&\,\sum_{j=1; j\neq m}^d \partial_{W_1}^N\cdots\partial_{W_{j-1}}^N\partial_{W_{j+1}}^N\cdots\partial_{W_d}^N\partial_{W_m}^{N}u_N(x) \prod_{\substack{i=1\\i\neq j}}^d\big(W_i(y_i)-W_i(x_i)\big)\\
&+ \,  \partial_{W_1}^N\cdots\partial_{W_d}^N \partial_{W_m}^{N}u_N(x)\prod_{i=1;\, i\neq m}^d \big(W_i(y_i)-W_i(x_i)\big)\,\, =\\
&(\partial_{W_m}^{N}u_N)^{\ast '}(y_1,\cdots , y_{m-1},y_{m+1},\cdots, y_d)\, ,
\end{align*}
where $(\partial_{W_m}^{N}u_N)^{\ast '}$ denotes the $W-$ interpolation of the function $\partial_{W_m}^{N}u_N$  in the $(d-1)$-dimensional setup.

In this way,
\begin{align*}
&\|\partial_{W_m} u^*_N\|_{L^2_{W_m}(\bb T^d)}^2 =  \int_{\bb T^d}\big(\partial_{W_m} u^*_N(y)\big)^2\, d(y^m\otimes W_m) = \\
 \sum_{x\in \frac1N\bb T^d_N} & \int_{Q_N(x)} \big(\partial_{W_m} u^*_N(y)\big)^2\, d(y^m\otimes W_m) \leq
 \sum_{x\in \frac1N\bb T^d_N} \int_{Q_N(x)} \sum_{z \in \frac 1N \bb T^d_N;\,\, |z-x|=1/N}   (\partial^N_{W_m}u_N(z))^2\;d(y^m\otimes W_m)\; \\
\leq\; &\frac{2^d}{N^{d-1}}\sum_{x\in\frac 1N \bb T^d_N}(\partial^N_{W_m}u_N(x))^2\Big(W_m(\frac{x_m+1}{N})- W_m(\frac{x_m}N)\Big) \ =\;
 2^d\|\partial^N_{W_m} u_N\|_{L^2_{W_m}(\bb T^d_N)}^2.
\end{align*}

Thus we have shown that
$$ \|u^*_N\|_{H_{1,W}(\bb T^d)}\leq 2^d \|u_N\|_{H_{1,W}(\bb T_N^d)}<2^dC, $$
where the last inequality follows from \eqref{L}, and this concludes the proof.
\end{proof}

\vspace{1cm}

We will now obtain the converse procedure, that is, how to use a measurable function $f$ in $L^2(\bb T^d)$ to properly define a mesh function in $\bb T_N^d$. This is not straightforward, since the restriction of $f$ to the set $\{0,1/N,\ldots,(N-1)/N\}^d$ is not well-defined (this is a set of Lebesgue measure zero). 

For arbitrary functions $f\in L^2(\bb T^d)$ and  $g\in L^2_{x^k\x W_k,0}(\bb T^d)$, consider the mesh functions defined on the discrete torus $\frac1N\bb T_N^d$ of the form
\begin{equation}
\label{discrete L2}
f_N(x) = N^d\int_{Q_N(x)} f(y)dy,
\end{equation}
and
\begin{equation}
\label{discrete L21}
g_N(x) = \frac{N^{d-1}}{W_k(x_k+1/N)-W_k(x_k)}\int_{Q_N(x)} g(y)d(y^k\otimes W_k),
\end{equation}
where $ x\in \frac1N\bb T^d_N$, and $Q_N(x) =\{y\in \bb T^d ; 0\leq y_i-x_i< N^{-1}, \   i=1, \ldots, d\}.$

For a generalized function $f \in H^{-1}_{1,W}(\bb T^d)$ in the form \eqref{dual}, $f = f_0 - \sum_{k=1}^d\partial_{x_k}f_k,$ where $f_0\in L^2(\bb T^d),$ and $f_k\in L^2_{x^k\x W_k,0}(\bb T^d)$, we consider
\begin{equation}\label{discrete-functional}
f_N (x) = f_{0,N}(x) - \sum_{k=1}^d \partial^N_{x_k}f_{k,N}(x),
\end{equation}
where $ x\in \frac1N\bb T^d_N$,  $f_{0,N}$ is the discretization given by \eqref{discrete L2}, and $f_{k,N}$ ($k=1,\ldots,d$) is the discretization given by \eqref{discrete L21}. $f_N$ clearly belongs to $H^{-1}_{W}(\bb T_N^d)$.

The following lemma shows that this procedure yields a good approximation for $f$.

\begin{lemma}\label{conv-forte-media}
Let $f\in L^2(\bb T^d)$, and $g\in L^2_{x^j\x W_j}(\bb T^d)$ for some $j=1,2,\ldots, d$. Let $f_N$ and $g_N$ be the mesh functions defined in equations \eqref{discrete L2} and \eqref{discrete L21}.
Let $\widetilde{f}_N$ and $\widetilde{g}_N$ be the piecewise-constant interpolation for $f_N$ and $g_N$. Then,
$$\|f-\widetilde{f}_N\|_{L^2(\bb T^d)}\to 0,$$
and 
$$\|g-\widetilde{g}_N\|_{L^2_{x^j\x W_j,0}(\bb T^d)}\to 0,$$
as $N\to\infty$. In particular, if $F\in H^{-1}_W(\bb T^d)$, then $F_N\to F$ strongly in $H^{-1}_W(\bb T^d)$.
\end{lemma}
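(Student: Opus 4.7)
The strategy is to recognize $\widetilde{f}_N$ as a conditional-expectation-type operator associated with the partition $\{Q_N(x)\}_{x\in\frac 1N\bb T_N^d}$, and exploit its contractive nature together with density of continuous functions. I will treat the two scalar statements first and deduce the $H^{-1}_W$ statement from Lemma \ref{caracterization}.

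\textbf{Step 1: The $L^2(\bb T^d)$ claim.} For continuous $\phi\in C(\bb T^d)$, uniform continuity on the compact torus gives $|\phi(y)-\phi(x)|\le \omega_\phi(\sqrt d/N)$ whenever $y\in Q_N(x)$. Since $\phi_N(x)$ is the average of $\phi$ over $Q_N(x)$, it lies between $\min_{Q_N(x)}\phi$ and $\max_{Q_N(x)}\phi$, so $\|\widetilde{\phi}_N-\phi\|_\infty\le 2\omega_\phi(\sqrt d/N)\to 0$, and in particular $\widetilde{\phi}_N\to\phi$ in $L^2(\bb T^d)$. For general $f\in L^2(\bb T^d)$, Jensen's inequality yields
\begin{equation*}
\int_{Q_N(x)}\widetilde{f}_N^{\,2}\,dy \;=\; \frac{1}{N^d}f_N(x)^2 \;\le\; \int_{Q_N(x)}f^2\,dy,
\end{equation*}
so $\|\widetilde{f}_N\|_{L^2}\le\|f\|_{L^2}$. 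Given $\varepsilon>0$, pick $\phi\in C(\bb T^d)$ with $\|f-\phi\|_{L^2}<\varepsilon/3$; then by contractivity $\|\widetilde{f}_N-\widetilde{\phi}_N\|_{L^2}<\varepsilon/3$, and for $N$ large $\|\widetilde{\phi}_N-\phi\|_{L^2}<\varepsilon/3$, giving $\|\widetilde{f}_N-f\|_{L^2}<\varepsilon$.

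\textbf{Step 2: The $L^2_{x^j\x W_j}$ claim.} The argument is identical provided the underlying measure $\mu_j = dx_1\cdots dW_j\cdots dx_d$ is treated consistently. Continuous functions are dense in $L^2(\mu_j)$ (standard for finite Radon measures on a compact metric space), and for $\phi\in C(\bb T^d)$ one has
\begin{equation*}
|g_N(x)-\phi(x)|\;\le\; \frac{N^{d-1}}{W_j(x_j+1/N)-W_j(x_j)}\int_{Q_N(x)}|\phi(y)-\phi(x)|\,d\mu_j \;\le\; \omega_\phi(\sqrt d/N),
\end{equation*}
so $\widetilde{g}_N\to \phi$ uniformly, hence in $L^2(\mu_j)$. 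The Jensen contractivity $\|\widetilde{g}_N\|_{L^2(\mu_j)}\le \|g\|_{L^2(\mu_j)}$ proceeds exactly as before (the weights $W_j(x_j+1/N)-W_j(x_j)$ cancel). The same $\varepsilon/3$ argument then gives $\|\widetilde{g}_N-g\|_{L^2(\mu_j)}\to 0$. One should verify, for the "$, 0$" subscript, that $\widetilde{g}_N\in L^2_{x^j\x W_j,0}(\bb T^d)$ whenever $g$ is: by construction $\int_{\bb T^d}\widetilde{g}_N\,d\mu_j = \sum_x \int_{Q_N(x)} g\,d\mu_j = \int_{\bb T^d} g\,d\mu_j = 0$, so the zero-mean property is preserved.

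\textbf{Step 3: The $H^{-1}_W$ claim.} Given $F\in H^{-1}_W(\bb T^d)$, fix any representation $F=f_0-\sum_{k=1}^d\partial_{x_k}f_k$ with $f_0\in L^2(\bb T^d)$, $f_k\in L^2_{x^k\x W_k,0}(\bb T^d)$ from Lemma \ref{caracterization}. The discretization \eqref{discrete-functional} and its interpolation give $\widetilde{F}_N = \widetilde{f}_{0,N} - \sum_k\partial_{x_k}\widetilde{f}_{k,N}$, which is a representation of $\widetilde{F}_N - F$ as $(\widetilde{f}_{0,N}-f_0) - \sum_k\partial_{x_k}(\widetilde{f}_{k,N}-f_k)$. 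The infimum characterization in Lemma \ref{caracterization} then yields
\begin{equation*}
\|\widetilde{F}_N-F\|_{H^{-1}_W}^2 \;\le\; \|\widetilde{f}_{0,N}-f_0\|_{L^2(\bb T^d)}^2 + \sum_{k=1}^d \|\widetilde{f}_{k,N}-f_k\|_{L^2_{x^k\x W_k}(\bb T^d)}^2,
\end{equation*}
and each term tends to $0$ by Steps 1 and 2.

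The only mild subtlety I anticipate is Step 2, specifically handling the atoms of $dW_j$ when approximating by continuous functions: one must be sure that continuous functions are dense in $L^2(\mu_j)$ despite these atoms, which follows from standard measure-theoretic facts since $\mu_j$ is a finite Borel measure on the compact metric space $\bb T^d$. Once this is in place, the contractivity argument is entirely parallel to the Lebesgue case, and the $H^{-1}_W$ statement reduces to a triangle-inequality-style estimate via Lemma \ref{caracterization}.
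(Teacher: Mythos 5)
Your proof is correct and is essentially the same as the paper's: both reduce to continuous functions by density, then exploit uniform continuity on the compact torus to show cell-averages converge uniformly. The only difference in organization is that you make the density reduction explicit by first proving the $L^2$ (resp.\ $L^2(\mu_j)$) contractivity of the averaging operator via Jensen's inequality and then running the standard $\varepsilon/3$ argument, whereas the paper simply asserts ``it is enough to consider $f$ continuous'' and bounds $\|f-\widetilde f_N\|^2$ directly by a Cauchy--Schwarz estimate; your version is more careful on exactly this point but does not constitute a different route. Your Step 3 handles the $H^{-1}_W$ claim via the infimum characterization in Lemma \ref{caracterization}, which is the intended (and in the paper, unelaborated) argument.
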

\begin{proof}
We will prove the first assertion. The proof of second is analogous. Recall the notation used in the previous lemmas. Since the continuous functions are dense in $L^2(\bb T^d)$  ( and also in $L^2_{x^j\otimes W_j}(\bb T^d)$ ) it is enough to consider $f$ continuous. Thus,
\begin{align*}
\|f-\widetilde{f}_N\|^2_{L^2(\bb T^d)} = \int_{\bb T^d} \Big(  f(y)-\widetilde{f}_N(y)  \Big)^2\, dy \, = \, \sum_{k=1}^{N^d}\int_{Q_N(x^k)}\Big(  f(y)-N^d \int_{Q_N(x^k)}f(z)\, dz  \Big)^2\, dy\, =\\
\sum_{k=1}^{N^d}\int_{Q_N(x^k)}\Big( N^d \int_{Q_N(x^k)}[f(y)-f(z)]\, dz  \Big)^2\, dy\, \leq \,
\sum_{x\in \frac1N\bb T^d_N} N^d\int_{Q_N(x)} \int_{Q_N(x)}[f(y)-f(z)]^2\, dz  \, dy\, , 
\end{align*}
where the last inequality follows from H\"older's inequality. Thus, the previous expression is bounded above by
\begin{align*}
\sum_{x\in \frac1N\bb T^d_N} N^d&\int_{\|\eta\|_\infty \leq 1/N}d\eta \int_{Q_N(x)}[f(y+\eta)-f(y)]^2  \, dy\, \leq 
2^d\sum_{x\in \frac1N\bb T^d_N} \sup_{\substack{{\eta;\, |\eta_i|<1/N}\\ i=1,\ldots , d}} &\int_{Q_N(x)}[f(y+\eta)-f(y)]^2  \,  dy \,.
\end{align*}
To conclude, note that by compacity of $\bb T^d$ the continuous function $f$ is, in fact,  uniformly continuous. So, for a fixed  $\epsilon>0$, there exists $N_0$ such that  $\| \eta\|_\infty <1/N_0$ implies that $|f(y+\eta)-f(y)|< \epsilon^{1/2}/2^d$. Therefore, the previous expression is bounded by $\epsilon$, and the proof of the  Lemma follows.
\end{proof}

\section{Homogenization}\label{sec5}

Our main goal in this Section is to prove the convergence of energies, namely Proposition \ref{hconvergencia}, and also the Homogenization Theorem,  Theorem \ref{homoge}. This last result is presented with fairly general hypotheses on the matrices $A^N$. In Proposition \ref{admitshomog} we provide an example of a very large class of functions that admit homogenization and in subsection \ref{rdo} we consider a scenario in which $A^N$ represents a random environment. We begin with some definitions and auxiliary results.

\subsection{Definitions and auxiliary results}\label{def}
We now focus on the analysis of the asymptotic behavior of the sequence $(u_N)$ given by solutions of the equations
\begin{equation}\label{pd}
\lambda u_N - \nabla^N A^N \nabla_W^N u_N = f^N,  \qquad\,\, \lambda \geq 0
\end{equation}
where  $f^N$ are fixed functions defined on $\frac1N\bb T^d_N$, $\; \nabla^N = (\partial_{x_1}^N,\ldots,\partial_{x_d}^N)$ and $\nabla_W^N = (\partial_{W_1}^N,\ldots,\partial_{W_d}^N)$ are the difference operators, and $A^N = (a_{kk})_{d\times d}$ are diagonal matrices, $A^N = (a_{kk})_{d\times d}$, of order $d$ satisfying the ellipticity condition: there exists a constant $\theta>0$ such that
$\theta^{-1}\le a^N_{kk}(x) \le \theta,$
for every $x\in \bb T^d$ and $k=1,\ldots,d$. 

The continuous counterpart of the theory developed in Subsection \ref{sobdis} can be found in \cite{SV}. More precisely,
one can find results on existence, uniqueness and boundedness of weak solution of the problem
\begin{equation}\label{h4}
\lambda u_0 - \nabla A\nabla_W u_0 = f, \qquad\,\, \lambda \geq 0.
\end{equation}

 We say that the diagonal matrix $A^N=(a_{kk}^N)$ $H$-converges to the diagonal matrix $A = (a_{kk})$,
denoted by $A^N\stackrel{H}{\longrightarrow} A$, if for every sequence $f^N$  of functionals on $H_{1,W}(\bb T_N^d)$
and $f\in  H^{-1}_W(\bb T^d)$, such that $f^N\to f$ as $N\to\infty$ strongly in $H^{-1}_W (\bb T^d)$, we have
\begin{itemize}
\item $u_N \to u_0$ weakly in $H_{1,W}(\bb T^d)$ as $N\to\infty$,
\item $a_{kk}^N \partial_{W_k}^N u_N\to a_{kk} \partial_{W_k} u_0$ weakly in $L^2_{x^k\x W_k}(\bb T^d)$ for each $k=1,\ldots,d$,
\end{itemize}
where $u_N:\frac1N\bb T_N^d\to\bb R$ is the solution of the \eqref{pd} and $u_0\in H_{1,W}(\bb T^d)$ is the unique weak solution of the \eqref{h4}.

In this case, we say that the diagonal matrix $A$ is a \textit{homogenization} of the sequence of random matrices $A^N$. We also say that
the operator $\nabla A\nabla_W$ is a \emph{homogenization} of the sequence of random operators $\nabla^N A^N\nabla_W^N$.

Next, we present an estimate that will be useful in this Section.

\begin{lemma}
\label{limitacaouniforme}
Let $f_N\to f$ as $N\to \infty$ weakly in $H_W^{-1}(\bb T^d)$. Then, if $u_N:\frac1N\bb T_N^d\to\bb R$ is the solution of the \eqref{pd},
there exists $C\geq 0$, not depending on $N$, such that
$$\|u_N\|_{H_{1,W}(\bb T_N^d)}\leq C.$$
\end{lemma}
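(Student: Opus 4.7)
The proof is a standard a priori energy estimate; the subtle point is converting the continuous weak-convergence hypothesis into a discrete bound on $\|f^N\|_{H^{-1}_W(\bb T_N^d)}$.

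\emph{Step 1 (Energy estimate).} First I would take $v = u_N$ as test function in the discrete weak formulation of \eqref{pd}, so that $B^N[u_N,u_N] = f^N(u_N)$. The ellipticity of $A^N$ gives
\[
B^N[u_N,u_N] \;\geq\; \lambda \|u_N\|_{L^2(\bb T_N^d)}^2 \;+\; \theta^{-1}\sum_{k=1}^d \|\partial_{W_k}^N u_N\|_{L^2_{W_k}(\bb T_N^d)}^2 \;\geq\; c_0 \|u_N\|_{H_{1,W}(\bb T_N^d)}^2,
\]
with $c_0 := \min(\lambda,\theta^{-1}) > 0$. By Lemma \ref{caracterization discrete} together with Cauchy--Schwarz, $f^N(u_N) \leq \|f^N\|_{H^{-1}_W(\bb T_N^d)} \|u_N\|_{H_{1,W}(\bb T_N^d)}$, so $\|u_N\|_{H_{1,W}(\bb T_N^d)} \leq c_0^{-1} \|f^N\|_{H^{-1}_W(\bb T_N^d)}$.

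\emph{Step 2 (Uniform bound on the dual norm).} By definition of the weak convergence of functionals, $F_N \to f$ weakly in $H^{-1}_W(\bb T^d)$, so the Banach--Steinhaus theorem yields $M := \sup_N \|F_N\|_{H^{-1}_W(\bb T^d)} < \infty$. To pull this back to the discrete side, I would evaluate $F_N$ on the $W$-interpolation $v^\ast$ of an arbitrary mesh function $v$. Using \eqref{deriv-interp}, i.e.\ $\partial_{W_k} v^\ast = (\partial_{W_k}^N v)^{(k)}$, and expanding the $W$-interpolation formula for $v^\ast$ term by term, the leading (constant-cell) contribution to $F_N(v^\ast)$ reproduces precisely $f^N(v)$, while the remainders consist of interpolation-correction terms each carrying an extra factor $W_i(x_i+1/N)-W_i(x_i)=O(1/N)$. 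A Cauchy--Schwarz estimate of the same flavor as in the proof of Lemma \ref{fraca-interp}, together with $\|f_{0,N}\|,\|f_{k,N}\| \leq \|f^N\|_{H^{-1}_W(\bb T_N^d)}$, gives $|F_N(v^\ast) - f^N(v)| \leq \epsilon_N \|f^N\|_{H^{-1}_W(\bb T_N^d)} \|v\|_{H_{1,W}(\bb T_N^d)}$ with $\epsilon_N \to 0$. Combining with Lemma \ref{compact-interp}, which provides $\|v^\ast\|_{H_{1,W}(\bb T^d)} \leq C \|v\|_{H_{1,W}(\bb T_N^d)}$, I obtain
\[
|f^N(v)| \;\leq\; \bigl(CM + \epsilon_N \|f^N\|_{H^{-1}_W(\bb T_N^d)}\bigr) \|v\|_{H_{1,W}(\bb T_N^d)}.
\]
Taking the supremum over $v$ with $\|v\|_{H_{1,W}(\bb T_N^d)} \leq 1$ and absorbing the last term yields $\sup_N \|f^N\|_{H^{-1}_W(\bb T_N^d)} < \infty$ for $N$ large; the remaining finitely many indices are trivial. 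Combined with Step 1 this completes the proof.

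\emph{Main obstacle.} The delicate part is the comparison in Step 2 between the continuous bound on $\|F_N\|_{H^{-1}_W(\bb T^d)}$ (which is all that Banach--Steinhaus directly provides) and the discrete dual norm $\|f^N\|_{H^{-1}_W(\bb T_N^d)}$ (which is what one really needs in order to close the energy inequality). This forces a careful analysis of the difference between the $W$-interpolation of a discrete test function and the piecewise-constant extension of the discretized functional, together with a uniform decay (in $N$) of the remainder so that it can be absorbed back into the discrete dual norm.
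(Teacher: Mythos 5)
Your Step~1 matches the paper's: it is precisely the bound $\|u_N\|_{H_{1,W}(\bb T_N^d)}\le C_1\|f_N\|_{H^{-1}_W(\bb T_N^d)}$ that the paper quotes from Lemma~\ref{lmdiscreto}. The divergence is entirely in Step~2. The paper closes the argument in one line, by identifying $\|f_N\|_{H^{-1}_W(\bb T_N^d)}$ with $\|\widetilde{f}_N\|_{H^{-1}_W(\bb T^d)}$ --- an identity that is essentially built into the paper's conventions, through the explicit discrete-dual-norm formula of Lemma~\ref{caracterization discrete} together with the fact that piecewise-constant interpolation preserves the mesh $L^2(\bb T_N^d)$ and $L^2_{W_k}(\bb T_N^d)$ norms --- and then noting that a weakly convergent sequence of functionals is bounded. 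You instead try to bound $\|f^N\|_{H^{-1}_W(\bb T_N^d)}$ by applying Banach--Steinhaus to $F_N$ and then comparing $F_N(v^\ast)$ with $f^N(v)$; that is a genuinely different route, and it is where the proposal breaks down.

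Two concrete problems with the comparison in Step~2. First, the increments $W_i(x_i+1/N)-W_i(x_i)$ are \emph{not} $O(1/N)$ in this setting: each $W_i$ is allowed to have atoms, and near a jump a single increment is of order one. This is the whole point of the $W$-calculus, and none of the interpolation lemmas in Subsection~\ref{correspondencia} argue by extracting $1/N$ factors from $W$-increments alone. Second, and more fundamentally, the discrepancy $v^\ast-\widetilde{v}$ (and likewise $\partial_{W_k}v^\ast-\widetilde{\partial^N_{W_k}v}$) involves all mixed $W$-differences $\partial^N_{W_{i_1}}\cdots\partial^N_{W_{i_s}}v$ up to order $d$, which are not controlled by $\|v\|_{H_{1,W}(\bb T_N^d)}$ (that norm only sees first-order $W$-derivatives). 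In the proof of Lemma~\ref{fraca-interp} such terms are handled by a summation by parts that moves one $W$-difference onto a $C^\infty$ test function $\Phi$, converts the resulting $\partial^N_{\bar W_{i_1}}\Phi\cdot(W\text{-increment})$ into $\partial^N_{\bar x_{i_1}}\Phi\cdot N^{-1}$, and lets the remaining $W$-increments cancel the $W$-denominators in the higher differences. Here the role of $\Phi$ is played by $f_{0,N}$ and $f_{k,N}$, arbitrary mesh functions with no smoothness, so that device is unavailable. Your ``Main obstacle'' paragraph correctly flags this as the delicate point, but the central estimate $|F_N(v^\ast)-f^N(v)|\le\epsilon_N\,\|f^N\|_{H^{-1}_W(\bb T_N^d)}\,\|v\|_{H_{1,W}(\bb T_N^d)}$ with $\epsilon_N\to 0$ is asserted rather than proved, and I do not see how to establish it with the tools in the paper. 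In short: Step~1 is the paper's argument; Step~2 replaces the paper's one-line norm identity by a genuinely harder quantitative interpolation estimate that is left unjustified.
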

\begin{proof}
Using lemma \ref{lmdiscreto}, we obtain a constant $C_1$, not depending on $N$ such that
$$\|u_N\|_{H_{1,W}(\bb T_N^d)}\leq C_1\|f_N\|_{H^{-1}_W(\bb T_N^d)}.$$
Note that $f_N\to f$ weakly in $H^{-1}_W(\bb T^d)$ means that $\widetilde{f}_N\to f$ weakly in $H^{-1}_W(\bb T^d)$. Since $\|f_N\|_{H_{W}^{-1}(\bb T_N^d)} = \|\widetilde{f}\|_{H^{-1}_W(\bb T^d)}$, and $\widetilde{f}_N$ is weakly convergent, it is bounded, and thus there exists a constant $C_2\geq 0$, such that 
$$\|f_N\|_{H^{-1}_W(\bb T_N^d)} = \|\widetilde{f}_N\|_{H^{-1}_W(\bb T^d)}\leq C_2.$$
Thus, for all $N\geq 1$,
$$\|u_N\|_{H_{1,W}(\bb T_N^d)}\leq C_1C_2 = C.$$
\end{proof}

We will now prove a very simple version of the compensated compactness Theorem.

\begin{lemma}[Compensated Compactness in $L^2_{x^k\otimes W_k}(\bb T^d)$]\label{compcomp}
For  $k=1,\ldots,d$, let $(g_{k,N})$ and $(w_{k,N})$ be two sequences of functions on $L^2_{x^k\otimes W_k}(\bb T^d)$ such that
$$ g_{k,N}\to g_k,\quad \hbox{strongly in~~}L^2_{x^k\otimes W_k}(\bb T^d)  \qquad \text{and} \qquad w_{k,N}\to w_k,\quad \hbox{weakly in~~}L^2_{x^k\otimes W_k}(\bb T^d),$$
where $g_k, w_k\in L^2_{x^k\otimes W_k}(\bb T^d)$. 
Then,
$$g_{k,N}w_{k,N} \to g_kw_k\quad\hbox{weakly}\hbox{~in~}L^2_{x^k\otimes W_k}(\bb T^d).$$
\end{lemma}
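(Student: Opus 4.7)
The proof is the classical strong--weak multiplication argument, and the plan is as follows.

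First, write the algebraic identity
\begin{equation*}
g_{k,N} w_{k,N} \;-\; g_k w_k \;=\; (g_{k,N} - g_k)\, w_{k,N} \;+\; g_k\,(w_{k,N} - w_k),
\end{equation*}
and record the one preliminary fact used throughout: weak convergence of $(w_{k,N})$ in the Hilbert space $L^2_{x^k\otimes W_k}(\bb T^d)$, together with the Banach--Steinhaus theorem, supplies a constant $M>0$ with $\|w_{k,N}\|_{L^2_{x^k\otimes W_k}} \le M$ for every $N$.

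Next, fix a bounded test function $\phi$ (say $\phi \in L^\infty(\bb T^d)\cap L^2_{x^k\otimes W_k}(\bb T^d)$) and pair the identity with $\phi$ in the $L^2_{x^k\otimes W_k}$ inner product. The first piece can be reorganized as $\langle g_{k,N} - g_k,\, w_{k,N}\phi\rangle_{x^k\otimes W_k}$, and since $\|w_{k,N}\phi\|_{L^2_{x^k\otimes W_k}} \le \|\phi\|_\infty M$, the Cauchy--Schwarz inequality gives
\begin{equation*}
\bigl|\langle (g_{k,N}-g_k)\,w_{k,N},\,\phi\rangle_{x^k\otimes W_k}\bigr| \;\le\; \|\phi\|_\infty\, M\, \|g_{k,N}-g_k\|_{L^2_{x^k\otimes W_k}} \;\longrightarrow\; 0
\end{equation*}
by the strong convergence hypothesis. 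The second piece rewrites as $\langle w_{k,N}-w_k,\, g_k\phi\rangle_{x^k\otimes W_k}$; boundedness of $\phi$ ensures that $g_k\phi\in L^2_{x^k\otimes W_k}$, and so the weak convergence $w_{k,N}\rightharpoonup w_k$ forces this term to vanish as well.

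Combining the two estimates yields $\langle g_{k,N} w_{k,N},\phi\rangle_{x^k\otimes W_k} \to \langle g_k w_k,\phi\rangle_{x^k\otimes W_k}$ for every bounded $\phi$, and the conclusion is then obtained by a density argument, since bounded functions are dense in $L^2_{x^k\otimes W_k}(\bb T^d)$. The point I expect to require the most care is exactly this last step: the product of two $L^2$ functions sits a priori only in $L^1$, so one cannot blindly assert a uniform $L^2$ bound on $g_{k,N} w_{k,N}$ to propagate the convergence from the dense class of bounded test functions to all of $L^2_{x^k\otimes W_k}$. In the intended applications (the homogenization proofs in Section~\ref{sec5}) the test functions always come from the class $\mf D_W(\bb T^d)$, which is uniformly bounded by Lemma~\ref{right-cont-bounded}, so the strong--weak pairing argument above is exactly what is needed.
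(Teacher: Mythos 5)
Your argument is the same as the paper's: the paper decomposes the product in exactly this way, bounds the first term by Cauchy--Schwarz using the uniform bound on $\|w_{k,N}\|$ from Banach--Steinhaus and the strong convergence of $g_{k,N}$, and handles the second term by moving $g_k\phi$ to the right slot of the pairing and invoking weak convergence. The only difference is the choice of test class: the paper fixes $\phi\in\mf D_W(\bb T^d)$ (bounded by Lemma~\ref{right-cont-bounded}) and stops there, while you work with a general bounded $\phi$ and then contemplate a density argument. Your closing caveat is exactly on point: since $g_{k,N}w_{k,N}$ is a priori only uniformly bounded in $L^1$ and not in $L^2_{x^k\otimes W_k}$, one cannot upgrade convergence against a dense class of bounded test functions to genuine weak convergence in $L^2_{x^k\otimes W_k}(\bb T^d)$. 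The paper itself never attempts this upgrade; its proof (and all uses of the lemma in Section~\ref{sec5}) test only against $\mf D_W(\bb T^d)$, so the conclusion is really ``convergence against $\mf D_W$ test functions.'' Your proof establishes the same thing, and your remark correctly identifies the imprecision in the statement of the lemma rather than a defect in your argument.
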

\begin{proof}
Let $\phi\in {\mf D}_W(\bb T^d)$, then
\begin{eqnarray*}
\int_{\bb T^d} g_{k,N}w_{k,N}\phi d(x^k\otimes W_k)= \int_{\bb T^d} (g_{k,N} - g_k)w_{k,N}\phi d(x^k\otimes W_k)
+ \int_{\bb T^d} g_k w_{k,N}\phi d(x^k\otimes W_k).
\end{eqnarray*}

Let us deal with each term in the right-hand side of the previous equation.

Note that
\begin{eqnarray*}
\left|\int_{\bb T^d} (g_{k,N}-g_k))w_{k,N}\phi d(x^k\otimes W_k)\right| &\leq& \| g_{k,N} - g_k\|_{L^2_{x^k\otimes W_k}(\bb T^d)}\| w_{k,N}\phi\|_{L^2_{x^k\otimes W_k}(\bb T^d)}.
\end{eqnarray*}
Note that, from Lemma \ref{right-cont-bounded}, $\phi$ is bounded, and since $(w_{k,N})$ is a weakly convergent sequence, its norm is uniformly bounded. 
Therefore, the previous equation tends to zero as $N\to\infty$.

We now deal with the other term. Begin by recalling that $\phi$ is bounded. Thus, $q_k\phi \in L^2_{x^k\otimes W_k}(\bb T^d)$, since $\bb T^d$ is a set of finite $d(x^k\otimes W_k)$-measure.
Therefore, the weak convergence of $w_{k,N}$ to $w_k$ implies that
$$\int_{\bb T^d} g_k w_{k,N}\phi d(x^k\otimes W_k) = \int_{\bb T^d} w_{k,N} (g_k\phi) d(x^k\otimes W_k)\to \int_{\bb T^d} w_k g_k\phi d(x^k\otimes W_k).$$
This concludes the proof.
\end{proof}

We conclude this subsection with a version of Compensated Compactness Theorem for discrete approximations.
\begin{corollary}[Compensated Compactness for Discrete Approximations]\label{compensated1}
For  $k=1,\ldots,d$, let $(q_{k,N})$ and $(v_{k,N})$ be sequences of functions on $L^2_{x^k\otimes W_k}(\bb T_N^d)$ such that
$$ q_{k,N}\to q_k,\quad \hbox{strongly in~~}L^2_{x^k\otimes W_k}(\bb T^d)  \qquad \text{and} \qquad v_{k,N}\to v_k,\quad \hbox{weakly in~~}L^2_{x^k\otimes W_k}(\bb T^d),$$
where $q_k, v_k\in L^2_{x^k\otimes W_k}(\bb T^d)$. Then,
$$q_{k,N}v_{k,N} \to q_kv_k\quad\hbox{weakly}\hbox{~in~}L^2_{x^k\otimes W_k}(\bb T^d).$$
\end{corollary}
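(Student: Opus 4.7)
The proof should be a near-immediate reduction to the continuous Compensated Compactness lemma (Lemma \ref{compcomp}) via the piecewise-constant interpolation. The plan is as follows.

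First, I would unpack the definition of convergence for mesh functions. By the conventions established in Subsection \ref{correspondencia}, the hypotheses $q_{k,N}\to q_k$ strongly and $v_{k,N}\to v_k$ weakly in $L^2_{x^k\otimes W_k}(\bb T^d)$ mean, by definition, that the piecewise-constant interpolations satisfy $\widetilde q_{k,N}\to q_k$ strongly and $\widetilde v_{k,N}\to v_k$ weakly in $L^2_{x^k\otimes W_k}(\bb T^d)$ (one checks that the $L^2_{W_k}(\bb T^d_N)$-norm coincides with the $L^2_{x^k\otimes W_k}(\bb T^d)$-norm of the corresponding interpolation, since the measure of $Q_N(x)$ with respect to $d(y^k\otimes W_k)$ equals $N^{-(d-1)}[W_k((x_k+1)/N)-W_k(x_k/N)]$).

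Second, I would make the key algebraic observation that the piecewise-constant interpolation commutes with pointwise multiplication: for every $y\in Q_N(x)$,
\begin{equation*}
\widetilde{q_{k,N}\, v_{k,N}}(y)\;=\; q_{k,N}(x/N)\, v_{k,N}(x/N)\;=\; \widetilde q_{k,N}(y)\, \widetilde v_{k,N}(y).
\end{equation*}
Thus proving the weak convergence $q_{k,N}v_{k,N}\to q_kv_k$ in the discrete sense is equivalent to proving $\widetilde q_{k,N}\widetilde v_{k,N}\to q_kv_k$ weakly in $L^2_{x^k\otimes W_k}(\bb T^d)$.

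Third, I would invoke the continuous Compensated Compactness Lemma \ref{compcomp} directly with $g_{k,N}=\widetilde q_{k,N}$ and $w_{k,N}=\widetilde v_{k,N}$. Its hypotheses are precisely the convergences obtained in the first step, so its conclusion gives $\widetilde q_{k,N}\widetilde v_{k,N}\to q_k v_k$ weakly in $L^2_{x^k\otimes W_k}(\bb T^d)$. Combined with the identity of the second step, this is exactly the definition of $q_{k,N}v_{k,N}\to q_k v_k$ weakly in $L^2_{x^k\otimes W_k}(\bb T^d)$, completing the proof.

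There is no real obstacle here; the only subtlety is bookkeeping between the three possible interpretations of ``convergence in $L^2_{x^k\otimes W_k}$'' (the discrete one, the $\widetilde{\cdot}$-interpolation one, and the continuous one for genuine $L^2_{x^k\otimes W_k}(\bb T^d)$ functions), and making sure that the piecewise-constant interpolation used in the definition of discrete convergence is indeed the one that respects pointwise products. Once that is set up, the corollary is a formal consequence of Lemma \ref{compcomp}.
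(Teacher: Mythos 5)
Your proposal is correct and follows precisely the paper's own argument: set $g_{k,N}=\widetilde q_{k,N}$, $w_{k,N}=\widetilde v_{k,N}$, observe that piecewise-constant interpolation is multiplicative so $\widetilde{q_{k,N}v_{k,N}}=\widetilde q_{k,N}\widetilde v_{k,N}$, and invoke Lemma \ref{compcomp}. You only spell out the bookkeeping a bit more explicitly than the paper does.
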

\begin{proof}
Let $w_{k,N} = \widetilde{v}_{k,N}$ and $g_{k,N} = \widetilde{q}_{k,N}$. Thus, from definition, $w_{k,N}$ converges weakly in $L^2_{x^k\otimes W_k}(\bb T^d)$
to $v_k$, and $g_{k,N}$ converges strongly in $L^2_{x^k\otimes W_k}(\bb T^d)$ to $q_k$. Therefore, we may apply lemma \ref{compcomp} to conclude that
$$\widetilde{v_{k,N}q_{k,N}} = \widetilde{v}_{k,N} \widetilde{q}_{k,N} = w_{k,N}g_{k,N} \to v_kq_k\hbox{~~weakly in~~}L^2_{x^k\otimes W_k}(\bb T^d).$$
\end{proof}

\begin{remark}
One should notice that lemma \ref{compcomp} is, indeed, a version of the Compensated Compactness Theorem. In fact, the classical assumptions would be
$g_{k,N}\to g_k$ and $w_{k,N}\to w_k$ weakly in $L^2_{x^k\otimes W_k}(\bb T^d)$, and $\partial_{x_k}g_{k,N} \to h$ strongly in $H^{-1}_W(\bb T^d)$, 
where $\partial_{x_k}g_{k,N}$ should be understood as in lemma \ref{caracterization}.
However, in our setup, the functional induced by $g_{k,N}$ coincides with $-\partial_{x_k}g_{k,N}$, and thus, since $\|\partial_x g_{k,N}\|_{H^{-1}_W(\bb T^d)} = \|g_{k,N}\|_{L^2_{x^k\otimes W_k}(\bb T^d)}$, we recover strong convergence for $g_{k,N}$. 
The reason why the functional induced by $g_{k,N}$ is $-\partial_{x_k}g_{k,N}$ instead of the standard functional is because the standard functional given by
$$\phi \mapsto \int_{\bb T^d} g_{k,N} \phi dx$$
is not well-defined for $g_{k,N}\in L^2_{x^k\otimes W_k}(\bb T^d)$. In fact, $g_{k,N}$ may not belong to $L^2(\bb T^d)$.
\end{remark}

\subsection{Main Results} \label{main}
We are now in a position to state and prove the homogenization of the difference operators introduced in the previous Section.  We begin by proving an auxiliary lemma that will be needed in the proof of the convergence of energies of a sequence of homogenized matrices.

\begin{lemma}\label{lemachapeu}
Let $X$ be a Banach space and $X^\ast$ its dual. If $f_N\in X^\ast$ is such that $f_N\to f \in X^\ast$, and $u_N\in X$ is such that $u_N\to u\in X$ weakly. 
Then,
$$f_N(u_N)\to f(u),$$
as $N\to\infty$.
\end{lemma}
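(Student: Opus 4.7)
The plan is to use the standard decomposition
\[
f_N(u_N) - f(u) \;=\; \bigl(f_N - f\bigr)(u_N) \;+\; \bigl(f(u_N) - f(u)\bigr),
\]
and show that each of the two summands vanishes as $N\to\infty$.

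For the first summand, I would bound it by operator duality:
\[
\bigl|(f_N - f)(u_N)\bigr| \;\le\; \|f_N - f\|_{X^\ast}\,\|u_N\|_X.
\]
The factor $\|f_N - f\|_{X^\ast}$ tends to zero by the assumed strong convergence $f_N\to f$ in $X^\ast$. The factor $\|u_N\|_X$ is uniformly bounded: any weakly convergent sequence in a Banach space is norm-bounded, which is a consequence of the Banach--Steinhaus (uniform boundedness) principle applied to the family $\{u_N\}$ viewed as elements of $X^{\ast\ast}$ acting on $X^\ast$. Hence the product tends to $0$.

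For the second summand, $f(u_N) - f(u) \to 0$ is nothing but the very definition of weak convergence $u_N \rightharpoonup u$ in $X$, since $f$ is a fixed element of $X^\ast$.

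Combining the two estimates yields $f_N(u_N) \to f(u)$, as required. There is no real obstacle here; the only thing to be careful about is invoking Banach--Steinhaus to guarantee boundedness of $\|u_N\|_X$, since without this fact the first summand cannot be controlled.
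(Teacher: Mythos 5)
Your proof is correct and follows the same approach as the paper: decompose $f_N(u_N) - f(u)$ into $(f_N - f)(u_N)$ plus $f(u_N) - f(u)$, bound the first term using the uniform boundedness of $\|u_N\|_X$ (a consequence of weak convergence via Banach--Steinhaus) together with $\|f_N - f\|_{X^\ast} \to 0$, and handle the second term directly from the definition of weak convergence.
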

\begin{proof}
Since $u_N$ converges weakly in $X$, it forms a bounded sequence, that is, there exists $C\geq 0$ such that, for each $N\geq 1$, $\|u_N\|\leq C$. 
We also have
$$|f_N(u_N) - f(u_N)|\leq \|f_N-f\|_{X^\ast}\|u_N\|_X\leq C\|f_N-f\|_{X^\ast},$$
which tends to zero as $N\to\infty$. On the other hand, from weak convergence, $f(u_N)\to f(u)$ as $N\to\infty$. Therefore, $f_N(u_N)\to f(u)$ as $N\to\infty$.
\end{proof}

The next proposition shows that even though the $H$-convergence only requires weak convergence in its definition, it yields a convergence in a strong sense (convergence in the $L^2$-norm for the piecewise-constant interpolation).

\begin{proposition}\label{hconvergencia}
Let $A^N\stackrel{H}{\longrightarrow} A$, as $N\to\infty$, with $u_N$ being the solution of \eqref{pd},
where $f\in H_W^{-1}(\bb T^d)$ is fixed, $f^N\to f$ strongly in $H^{-1}_W(\bb T^d)$ and, $u_0$ is the weak solution of \eqref{h4}. Then, the following limit relations hold true:
$${u}_N \to u_0 \hbox{~in~}{L^2(\bb T^d)},$$
$$\frac{1}{N^d}\sum_{x\in\bb T_N^d} u_N^2(x/N) \stackrel{N\to\infty}{\longrightarrow} \int_{\bb T^d} u_0^2(x) dx,\quad\hbox{and}$$
%\begin{align*}
$$\frac{1}{N^{d-1}}\sum_{k=1}^d\sum_{x\in\bb T_N^d} a_{kk}^N(x/N) (\partial_{W_k}^N u_N(x/N))^2 \left[W_k((x_k+1)/N)-W_k(x_k/N)\right]
 \stackrel{N\to\infty}{\longrightarrow} \sum_{k=1}^d \int_{\bb T^d} a_{kk}(x) (\partial_{W_k} u_0(x))^2 d(x^k\x W_k).$$
%\end{align*}
\end{proposition}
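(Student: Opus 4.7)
The overall strategy is to combine the weak convergence supplied by $H$-convergence with a Rellich-type compactness argument to upgrade to strong $L^2$ convergence, and then to exploit the energy identities obtained by testing each equation against its own solution. By definition of $H$-convergence, $u_N \to u_0$ weakly in $H_{1,W}(\bb T^d)$ (meaning $u_N^\ast \to u_0$ weakly in $H_{1,W}(\bb T^d)$) and $a_{kk}^N \partial_{W_k}^N u_N \to a_{kk}\partial_{W_k}u_0$ weakly in $L^2_{x^k\x W_k}(\bb T^d)$, and the strong convergence $f^N\to f$ in $H^{-1}_W$ plus Lemma \ref{limitacaouniforme} give the uniform bound $\|u_N\|_{H_{1,W}(\bb T^d_N)}\le C$.

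For the first two assertions, the uniform bound together with Lemma \ref{compact-interp} implies that $(u_N^\ast)$ is precompact in $L^2(\bb T^d)$; any $L^2$-limit point must agree with the already identified weak $H_{1,W}$-limit $u_0$, so in fact $u_N^\ast \to u_0$ strongly in $L^2(\bb T^d)$. Lemma \ref{strong-interp} then transfers this strong convergence to the piecewise-constant interpolation, giving $\widetilde{u}_N\to u_0$ in $L^2(\bb T^d)$, which is the first statement. The second statement is simply the rewriting
\begin{equation*}
\frac{1}{N^d}\sum_{x\in\bb T_N^d} u_N^2(x/N) \;=\; \|\widetilde{u}_N\|_{L^2(\bb T^d)}^2 \;\longrightarrow\; \|u_0\|_{L^2(\bb T^d)}^2.
\end{equation*}

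For the third assertion, substitute $v=u_N$ in the weak formulation $B^N[u_N,v]=f^N(v)$ of \eqref{pd} and $v=u_0$ in the continuous analogue of \eqref{h4} to obtain the two energy identities
\begin{equation*}
\lambda \|u_N\|^2_{L^2(\bb T^d_N)} + \frac{1}{N^{d-1}}\sum_{k=1}^d\sum_{x\in\bb T^d_N} a_{kk}^N(x/N)\bigl(\partial_{W_k}^N u_N(x/N)\bigr)^2 \bigl[W_k((x_k{+}1)/N)-W_k(x_k/N)\bigr] = f^N(u_N),
\end{equation*}
\begin{equation*}
\lambda \|u_0\|^2_{L^2(\bb T^d)} + \sum_{k=1}^d \int_{\bb T^d} a_{kk}(x)\bigl(\partial_{W_k} u_0(x)\bigr)^2\,d(x^k\x W_k) = f(u_0).
\end{equation*}
Because $f^N\to f$ strongly in $H^{-1}_W(\bb T^d)$ and $u_N\to u_0$ weakly in $H_{1,W}(\bb T^d)$, Lemma \ref{lemachapeu} (applied in $X = H_{1,W}(\bb T^d)$) yields $f^N(u_N)\to f(u_0)$. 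Combined with the already established convergence $\lambda\|u_N\|^2_{L^2(\bb T^d_N)}\to \lambda\|u_0\|^2_{L^2(\bb T^d)}$ from the second statement, subtracting the two identities gives the desired limit for the energies.

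The main obstacle is the first assertion: the definition of $H$-convergence produces only weak convergence in $H_{1,W}$, and one must extract strong $L^2$ convergence from it. This relies essentially on the discrete Rellich-type compactness encoded in Lemma \ref{compact-interp} together with the uniform $H_{1,W}$-bound of Lemma \ref{limitacaouniforme}; once strong $L^2$-convergence is in hand, the energy identification for the quadratic form is a clean consequence of the scalar convergence $f^N(u_N)\to f(u_0)$ and does \emph{not} require the compensated-compactness machinery of Lemma \ref{compcomp}, which would be needed instead to identify the limit of mixed products such as $a_{kk}^N\partial_{W_k}^N u_N\cdot \partial_{W_k}^N u_N$ term-by-term.
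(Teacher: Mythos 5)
Your overall strategy and the paper's are essentially the same: establish the uniform $H_{1,W}$-bound via Lemma~\ref{limitacaouniforme}, use Lemma~\ref{compact-interp} plus uniqueness of weak limits to upgrade $u_N\to u_0$ to strong $L^2$ convergence, transfer to the piecewise-constant interpolation via Lemma~\ref{strong-interp} to get the second assertion, and obtain the energy convergence from the scalar convergence $f^N(u_N)\to f(u_0)$ combined with the energy identities. The only structural difference is the order (you prove strong $L^2$ convergence first and then the pairing convergence; the paper reverses this), which is logically immaterial. You are also right that compensated compactness plays no role here; it is used only in the proof of Theorem~\ref{homoge}.

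There is, however, one gap in the step ``Lemma~\ref{lemachapeu} (applied in $X=H_{1,W}(\bb T^d))$ yields $f^N(u_N)\to f(u_0)$.'' The quantities $f^N$ and $u_N$ are discrete objects; the convergences $f^N\to f$ and $u_N\to u_0$ are, by definition, convergences of the interpolations $F_N=\widetilde f_{0,N}-\sum_k\partial_{x_k}\widetilde f_{k,N}$ and $u_N^\ast$. Applying Lemma~\ref{lemachapeu} in $H_{1,W}(\bb T^d)$ directly gives $F_N(u_N^\ast)\to f(u_0)$, but $F_N(u_N^\ast)$ is \emph{not} the same number as the discrete duality pairing $f^N(u_N)$: the piecewise-constant interpolation preserves the discrete inner products, so $f^N(u_N)=\langle\widetilde f_{0,N},\widetilde u_N\rangle_{L^2}+\sum_k\langle\widetilde f_{k,N},\widetilde{\partial_{W_k}^Nu_N}\rangle_{L^2_{x^k\otimes W_k}}$, which involves $\widetilde u_N$ and $\widetilde{\partial_{W_k}^Nu_N}$ rather than $u_N^\ast$ and $\partial_{W_k}u_N^\ast=(\partial_{W_k}^Nu_N)^{(k)}$. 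To conclude, one must first decompose $f^N(u_N)$ termwise via Lemma~\ref{caracterization discrete}, then invoke Lemma~\ref{fraca-interp} to show that each piecewise-constant interpolation $\widetilde u_N$, $\widetilde{\partial_{W_k}^Nu_N}$ has the same weak limit as the corresponding $\ast$-interpolation, and only then apply Lemma~\ref{lemachapeu} term by term — which is exactly what the paper does. Once this bookkeeping is supplied, your proof is complete and correct.
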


\begin{proof}
We begin by proving that
\begin{equation}\label{hconv1}
 f^N (u_N)\to f(u_0),
\end{equation}
as $N\to\infty$. As we plan on using Lemma \ref{fraca-interp}, we need to obtain a bound. By Lemma \ref{limitacaouniforme}, the sequence $u_N$ is $\|\cdot\|_{1,W}$ bounded 	
uniformly.  In particular, there exist constants $C_1,C_2^k>0$, $k=1,\ldots,d$, such that, for all $N\geq 1$, we have
\begin{equation}
\label{desigbound}
\frac{1}{N^d} \sum_{x\in\bb T_N^d} u_N^2(x/N) \leq C_1,
\end{equation}
and
\begin{equation}
\label{desigbound2}
\frac{1}{N^{d-1}}\sum_{x\in\bb T_N^d} (\partial_{W_k}^N u_N(x/N))^2(W_k((x_k+1)/N)-W_k(x_k/N)) \leq C_2^k.
\end{equation}

Now, observe that from Lemma \ref{caracterization discrete}, there exist functions $f_{0,N}, \ldots, f_{d,N}$ such that
$$f_N(u_N) = \<f_{0,N},u_N\>_{L^2(\bb T_N^d)} + \sum_{k=1}^d \<f_{k,N},\partial_{W_k}^Nu_N\>_{L^2_{x^k\otimes W_k}(\bb T^d)}.$$

This motivates us to define the following functionals $g_{i,N}\in H^{-1}_W(\bb T_N^d)$ by $g_{0,N}(v) = \<f_{0,N},v\>_{L^2(\bb T_N^d)}$,
and $g_{k,N}(v) = \<f_{k,N},\partial_{W_k}^Nv\>_{L^2_{x^k\otimes W_k}(\bb T_N^d)}$, for $k=1,\ldots,d$. Note that
$$g_{0,N}(v) = \<\widetilde{f}_{0,N},\widetilde{v}\>_{L^2(\bb T^d)}, \qquad \text{and} \qquad \,\, g_{k,N}(v) = \<\widetilde{f}_{k,N},\widetilde{\partial_{W_k}^Nv}\>_{L^2_{x^k\otimes W_k}(\bb T^d)}.$$
We have, by hypothesis, that $f_N\to f$ strongly, which means
$$\|\widetilde{f}_N-f\|^2_{H^{-1}_W(\bb T^d)} = \|\widetilde{f}_{N,0}-f_0\|_{L^2(\bb T^d)}^2 + \sum_{k=1}^d \|\widetilde{f}_{k,N}-f_k\|_{L^2_{x^k\otimes W_k}(\bb T^d)}^2\to 0,$$
as $N\to \infty$. Thus, $g_{0,N}\to f_0$ strongly in $L^2(\bb T^d)$, $g_{k,N}\to f_k$ strongly in $L^2_{x^k\otimes W_k}(\bb T^d)$. 

Now, observe that $u_N\to u_0$ weakly in $H^1_W(\bb T^d)$ means that $u_N^\ast \to u_0$ weakly in $H^1_W(\bb T^d)$, which in turn implies that
\begin{equation}\label{convfraca1}
u_N^\ast \to u_0,\qquad \hbox{weakly in}\quad L^2(\bb T^d),
\end{equation}
and
\begin{equation*}
\partial_{W_k} u_N^\ast \to \partial_{W_k} u_0,\qquad \hbox{weakly in}\quad L_{x^k\otimes W_k}^2(\bb T^d).
\end{equation*}
Nevertheless, by equation \eqref{deriv-interp}, we have
$$\partial_{W_k} u_N^\ast = \left(\partial_{W_k}^N u_N\right)^{(k)},$$
and thus
\begin{equation}\label{convfraca2}
\left(\partial_{W_k}^N u_N\right)^{(k)} \to \partial_{W_k} u_0,\qquad \hbox{weakly in}\quad L_{x^k\otimes W_k}^2(\bb T^d).
\end{equation}

Using Lemma \ref{fraca-interp}, \eqref{convfraca1} implies
\begin{equation}\label{convfraca3}
\widetilde{u}_N \to u_0,\qquad \hbox{weakly in}\quad L^2(\bb T^d),
\end{equation}
and \eqref{convfraca2} implies
\begin{equation}\label{convfraca4}
\widetilde{\partial_{W_k}^Nu_N}\to \partial_{W_k}u_0,\qquad \hbox{weakly in}\quad L_{x^k\otimes W_k}^2(\bb T^d).
\end{equation}

Now, since $\widetilde{g}_{0,N}\to f_0$ strongly in $L^2(\bb T^d)$ and $\widetilde{g}_{k,N}\to f_k$ strongly in $L^2_{x^k\otimes W_k}(\bb T^d)$, we may apply Lemma \ref{lemachapeu} to equations \eqref{convfraca3} and \eqref{convfraca4} to obtain that
\begin{equation}\label{convfunc1}
\widetilde{g}_{0,N}(\widetilde{u}_N) \to f_0(u_0),\qquad \text{as}\,\, N\to\infty \,\,\text{and,}
\end{equation}

\begin{equation}\label{convfunc2}
\widetilde{g}_{k,N}(\widetilde{\partial_{W_k}^Nu_N})\to f_k(\partial_{W_k}u_0), \qquad \text{as}\,\, N\to\infty \, .
\end{equation}

Note that
$$f(u_0) = f_0(u_0) + \sum_{k=1}^d f_k(\partial_{W_k}(u_0)),\qquad \text{and}$$
\begin{eqnarray*}
f_N(u_N)= g_{0,N}(u_N) + \sum_{k=1}^d g_{k,N}(\partial_{W_k}^N(u_N))
= \widetilde{g}_{0,N}(\widetilde{u}_N) + \sum_{k=1}^d \widetilde{g}_{k,N}(\widetilde{\partial_{W_k}^Nu_N}).
\end{eqnarray*}
Thus, from \eqref{convfunc1} and \eqref{convfunc2}, $f_N(u_N)$ converges to $f(u_0).$ Now, note that
$$f(u_0) = \lambda\int_{\bb T^d} u_0^2 dx + \sum_{k=1}^d \int_{\bb T^d} a_{kk} (\partial_{W_k} u_0)^2 d(x^k\x W_k),$$
since, by the hypothesis that $A\stackrel{H}{\longrightarrow} A$, $u_0$ is the weak solution of $\lambda u_0 - \nabla A\nabla_W u_0 = f.$ Note also that
\begin{eqnarray*}
f^N (u_N) & = & \frac{1}{N^d}\sum_{x\in\bb T_N^d} (\lambda u_N(x/N) -\nabla^N A^N\nabla_W^N u_N(x/N)) u_N(x/N)\\
&=&  \frac{\lambda}{N^d}\sum_{x\in\bb T_N^d} u_N^2(x/N) - \frac{1}{N^d}\sum_{x\in\bb T_N^d} u_N(x/N) \nabla^N A^N\nabla_W^N u_N(x/N),
\end{eqnarray*}
which, after a summation by parts in the above expressions, and using that $f_N(u_N)\to f(u_0)$, we obtain

\begin{align}\label{hconv2}
\frac{\lambda}{N^d}\sum_{x\in\bb T_N^d} u_N^2(x/N) + \frac{1}{N^{d-1}}\sum_{k=1}^d & \sum_{x\in\bb T_N^d} a_{kk}^N (\partial_{W_k}^N u_N(x/N))^2 [W_k((x_k+1)/N)-W_k(x_k/N)]     \nonumber \\
& \stackrel{N\to\infty}{\longrightarrow} \lambda \int_{\bb T^d} u_0^2 dx + \sum_{k=1}^d \int_{\bb T^d} a_{kk} (\partial_{W_k} u_0)^2 d(x^k\x W_k).
\end{align}

 Suppose that $u_N$ does not converge to $u_0$ in $L^2(\bb T^d)$. That is, there exist $\epsilon>0$ and a subsequence $(u_{N_j})$ such that
$$\|\widetilde{u}_{N_j} - u_0\|_{L^2(\bb T^d)}>\epsilon,$$
for all $j$. By Lemma \ref{compact-interp}, we have that there exists $v\in L^2(\bb T^d)$ and a further subsequence (also denoted by $u_{N_j}$) such that
$$u_{N_j}^\ast \stackrel{j\to\infty}{\longrightarrow} v,\quad\hbox{~in~} L^2(\bb T^d).$$
Using Lemma \ref{strong-interp}, we further obtain that
$$\widetilde{u}_{N_j} \stackrel{j\to\infty}{\longrightarrow} v,\quad\hbox{~in~} L^2(\bb T^d).$$
This implies that
$$\widetilde{u}_{N_j} \to v, \quad \hbox{weakly in~}L^2(\bb T^d),$$
but this is a contradiction. Indeed, from $H$ convergence of $A_N$ to $A$, we have that $u_N\to u_0$ weakly in $H_{1,W}(\bb T^d)$,
which means that $u_N^\ast \to u_0$ in $H_{1,W}(\bb T^d)$. Thus $u_N^\ast\to u_0$ weakly in $L^2(\bb T^d)$. Using Lemma \ref{fraca-interp}, this implies that $\widetilde{u}_N\to u_0$ weakly in $L^2(\bb T^d)$. In particular,
$$u_{N_j} \to u_0,\quad \hbox{weakly in~}L^2(\bb T^d),$$
and at the same time $\|v-u_0\|_{L^2(\bb T^d)} \geq \epsilon.$ Therefore, $u_N \to u_0$ in $L^2(\bb T^d)$. The proof thus follows from expression \eqref{hconv2}.
\end{proof}
\begin{corollary}\label{suave-hconv}
Let $u_0\in \mf D_W(\bb T^d)$ and $u_N:\frac1N \bb T^d_N\to \bb R$ such that $\lim u_N = u_0$ in $ L^2(\bb T^d)$.
%$$u_N \to u_0 \hbox{~in~} L^2(\bb T^d)$$
Then,
$$\|u_0 - u_N\|_{L^2(\bb T_N^d)} \longrightarrow 0\, .$$
\end{corollary}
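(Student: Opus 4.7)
The plan is to reduce the discrete $L^2$ estimate to an $L^2(\bb T^d)$ estimate via the piecewise-constant interpolation, and then to exploit the structure of $\mf D_W(\bb T^d)$ to verify that $u_0$, sampled on the lattice, interpolates back to itself.

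First, define the mesh function $v_N : \tfrac{1}{N}\bb T^d_N \to \bb R$ by $v_N(x/N) = u_0(x/N)$; this is well-defined because, by Remark \ref{DWrightcontinuous}, every $u_0\in \mf D_W(\bb T^d)$ is pointwise defined (right-continuous). Because the piecewise-constant interpolation preserves the $L^2$ norm in the sense that $\|w\|_{L^2(\bb T^d_N)}=\|\widetilde w\|_{L^2(\bb T^d)}$ for any mesh function $w$, we obtain
\begin{equation*}
\|u_0 - u_N\|_{L^2(\bb T^d_N)} \;=\; \|\widetilde{v}_N - \widetilde{u}_N\|_{L^2(\bb T^d)} \;\leq\; \|\widetilde{v}_N - u_0\|_{L^2(\bb T^d)} + \|u_0 - \widetilde{u}_N\|_{L^2(\bb T^d)}.
\end{equation*}
By the paper's conventions, the hypothesis $\lim u_N = u_0$ in $L^2(\bb T^d)$ means precisely $\widetilde{u}_N \to u_0$ in $L^2(\bb T^d)$, so the second term vanishes as $N\to\infty$.

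It remains to show that $\widetilde{v}_N \to u_0$ in $L^2(\bb T^d)$. By Lemma \ref{right-cont-bounded}, the function $u_0$ is bounded, say by $M>0$, hence so is $\widetilde{v}_N$, uniformly in $N$. I would then argue that $\widetilde{v}_N(y) \to u_0(y)$ for Lebesgue-almost every $y \in \bb T^d$, and conclude by dominated convergence. To verify the a.e.\ pointwise convergence, recall that $u_0$ is a finite linear combination of tensor products $f_1\otimes\cdots\otimes f_d$ with $f_k \in \mf D_{W_k} \subset C_{W_k}(\bb T)$; the discontinuity set of each $f_k$ is contained in the countable set $D(W_k)$. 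Therefore the discontinuity set of $u_0$ is contained in the finite union $\bigcup_{k=1}^d \bb T^{k-1}\times D(W_k)\times \bb T^{d-k}$, which has Lebesgue measure zero. For any $y$ outside this exceptional set, $u_0$ is continuous at $y$, and the lattice points $x_N/N = \lfloor Ny\rfloor/N$ (for which $y\in Q_N(x_N)$) converge to $y$, so $\widetilde{v}_N(y)=u_0(x_N/N)\to u_0(y)$.

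The only mildly delicate point is the a.e.\ pointwise convergence, where one must observe that the structure of $\mf D_W(\bb T^d)$ as a span of tensor products restricts the discontinuities of $u_0$ to a negligible set; right-continuity alone (which only handles limits from above) would not suffice, since the lattice approximants approach $y$ from below. Once this is in place, dominated convergence, together with the triangle inequality above, finishes the proof.
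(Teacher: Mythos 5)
Your proof follows essentially the same route as the paper's: decompose via the triangle inequality into $\|\widetilde{v}_N - u_0\|_{L^2(\bb T^d)}$ (where $v_N$ is the lattice restriction of $u_0$) plus $\|u_0 - \widetilde{u}_N\|_{L^2(\bb T^d)}$, kill the second term by hypothesis, and handle the first by boundedness (Lemma \ref{right-cont-bounded}) and dominated convergence. What you do differently, and better, is the justification of the a.e.\ pointwise convergence $\widetilde{v}_N(y)\to u_0(y)$. The paper invokes Remark \ref{DWrightcontinuous} (right-continuity of functions in $\mf D_W(\bb T^d)$), but as you correctly observe, the cell $Q_N(x)=\{y:\,x_k\le y_k<x_k+1/N\}$ has its sampling vertex at the \emph{lower-left} corner, so $\widetilde{v}_N(y)=u_0(x)$ with $x\le y$ componentwise; the approximants approach $y$ from below, which is exactly the direction right-continuity does \emph{not} control. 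Your replacement argument --- that $u_0$ is a finite linear combination of tensor products $f_1\otimes\cdots\otimes f_d$ whose discontinuity sets lie in $\bigcup_k\bb T^{k-1}\times D(W_k)\times\bb T^{d-k}$, a Lebesgue-null set, so $u_0$ is continuous a.e.\ and the pointwise limit holds a.e.\ regardless of the direction of approach --- is correct and in fact supplies a justification that the paper's appeal to right-continuity alone does not. Your proof is therefore not merely adequate but actually more careful than the one in the paper on this point.
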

\begin{proof}
We have that $u_N\to u_0$ in $L^2(\bb T^d)$ means that
\begin{equation}\label{a}
\|\widetilde{u}_N-u_0\|_{L^2(\bb T^d)}\longrightarrow 0,
\end{equation}
as $N\to\infty$. On the other hand, from the definition of the set $Q_N(x)$ in Subsection \ref{correspondencia} and, the fact that
the functions in $\mf D_W(\bb T^d)$ are right-continuous (see Remark \ref{DWrightcontinuous}), we have that
$$\widetilde{u_0\mid_{\bb T_N^d}} \longrightarrow u_0$$
pointwise as $N\to\infty$. From Lemma \ref{right-cont-bounded}, $u_0$ is bounded, and thus $\widetilde{u_0\mid_{\bb T_N^d}}$ is bounded, and therefore integrable ($\bb T^d$ has finite Lebesgue measure). Thus, we can use the Dominated Convergence Theorem to conclude that 
\begin{equation}\label{b}
\|\widetilde{u_0\mid_{\bb T_N^d}} - u_0\|_{L^2(\bb T^d)} \longrightarrow 0,
\end{equation}
as $N\to\infty$. Therefore, using \eqref{a} and \eqref{b}, we obtain
\begin{eqnarray*}
\|u_N-u_0\|_{L^2(\bb T_N^d)} &=& \|\widetilde{u}_N-\widetilde{u_0\mid_{\bb T_N^d}}\|_{L^2(\bb T^d)}\\
&\leq& \|\widetilde{u}_N-u_0\|_{L^2(\bb T^d)} + \|u_0 - \widetilde{u_0\mid_{\bb T_N^d}}\|_{L^2(\bb T^d)} \, \longrightarrow 0,
\end{eqnarray*}
as $N$ goes to  $\infty$.
\end{proof}

\vspace{1cm}

We will now state and prove the main result of this paper.

\begin{theorem}
\label{homoge}
Let $A^N = (a^N_{kk})_{k=1:d}$ be a sequence of diagonal matrices and $\theta>0$, such that $\theta^{-1}\leq a^N_{kk}\leq \theta$,   $a^N_{kk}$ and $1/a^N_{kk}$ converges weakly in $L^2_{x^k\otimes W_k}(\bb T^d)$. Then, $A^N$ admits a homogenization.
% where the homogenized matrix $A$ does not depend on the realization $\omega$.
\end{theorem}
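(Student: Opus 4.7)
The plan is to follow the classical Murat--Tartar H-convergence strategy, adapted to the discrete $W$-Sobolev framework developed in Section \ref{wdiscrete}. First, fix any $f\in H^{-1}_W(\bb T^d)$ together with a strongly convergent sequence $f^N\to f$ in $H^{-1}_W(\bb T^d)$, and let $u_N$ be the weak solution of \eqref{pd}. Lemma \ref{limitacaouniforme} provides a uniform bound on $\|u_N\|_{H_{1,W}(\bb T^d_N)}$; Lemma \ref{compact-interp} then yields a subsequence with $u_N^\ast$ converging weakly in $H_{1,W}(\bb T^d)$ to some $u_0$, and Lemma \ref{fraca-interp} promotes this to $\widetilde{\partial^N_{W_k} u_N}\rightharpoonup \partial_{W_k} u_0$ in $L^2_{x^k\x W_k}(\bb T^d)$. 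By ellipticity, the fluxes $a_{kk}^N\partial_{W_k}^N u_N$ are uniformly bounded in $L^2_{x^k\x W_k}$, so a further diagonal extraction produces $\widetilde{a_{kk}^N\partial_{W_k}^N u_N}\rightharpoonup \xi_k$ weakly in $L^2_{x^k\x W_k}(\bb T^d)$ for each $k$.

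Next, testing $B^N[u_N,\phi_N]=\<f^N,\phi_N\>_N$ against discretizations $\phi_N$ of test functions $\phi\in \mf D_W(\bb T^d)$ (which is dense by Proposition \ref{d1}) and using Lemma \ref{conv-forte-media} to handle the strong $H^{-1}_W$-convergence of $f^N$, one passes to the limit to obtain
\[ \lambda\int_{\bb T^d} u_0\phi\,dx + \sum_{k=1}^d\int_{\bb T^d}\xi_k\,\partial_{W_k}\phi\,d(x^k\x W_k)=f(\phi) \]
for every $\phi\in\mf D_W(\bb T^d)$. The remaining task, which is the crux of the argument, is to identify $\xi_k$ as $a_{kk}\,\partial_{W_k} u_0$ for a matrix $A=(a_{kk})$ that depends only on the sequence $A^N$ and not on $f$.

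For this identification I would exploit the diagonal structure of $A^N$ together with the separable form $W=\sum_k W_k$ to reduce to a one-dimensional problem in each coordinate. Concretely, for each direction $k$ I plan to construct oscillating test sequences (discrete correctors) $p_{k,N}:\tfrac{1}{N}\bb T^d_N\to\bb R$ such that $p_{k,N}$ converges strongly in $L^2(\bb T^d)$ to a simple affine-type function of $x_k$, while $a_{kk}^N\partial_{W_k}^N p_{k,N}$ converges \emph{strongly} in $L^2_{x^k\x W_k}(\bb T^d)$ to a deterministic function $\alpha_k$ built from the weak limit of $a_{kk}^N$, and $\partial_{W_k}^N p_{k,N}$ converges weakly to a function encoding the weak limit of $1/a_{kk}^N$. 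Testing the equation for $u_N$ against $p_{k,N}\phi$ and the equation for $p_{k,N}$ against $u_N\phi$, then subtracting, produces a div-curl type cancellation. Invoking the compensated compactness result (Lemma \ref{compcomp} and Corollary \ref{compensated1}) on the pairs $(a_{kk}^N\partial_{W_k}^N u_N,\,\partial_{W_k}^N p_{k,N})$ and $(a_{kk}^N\partial_{W_k}^N p_{k,N},\,\partial_{W_k}^N u_N)$ lets both products be passed to the limit, yielding an algebraic identity $\xi_k=a_{kk}\,\partial_{W_k} u_0$ in which $a_{kk}$ is independent of $f$ and $u_0$.

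Once the homogenized equation and its coefficient matrix are identified, existence and uniqueness of the solution to the continuous problem $\lambda u_0-\nabla A\nabla_W u_0=f$ (as developed in \cite{SV}) force the entire original sequence $u_N$, not just the extracted subsequence, to converge to $u_0$; this upgrades the subsequential convergence to convergence of the whole sequence in both senses required by the definition of $A^N\stackrel{H}{\longrightarrow}A$. The main obstacle I expect is Step 3, the construction of the correctors $p_{k,N}$: without any periodicity or ergodicity structure one cannot write them down explicitly, so they must be produced abstractly as solutions of discrete cell-type problems in each coordinate, and the required strong $L^2_{x^k\x W_k}$-convergence of $a_{kk}^N\partial_{W_k}^N p_{k,N}$ has to be extracted from the two weak convergence hypotheses (of $a_{kk}^N$ and of $1/a_{kk}^N$) combined with the decoupling provided by the diagonal/tensor-product structure.
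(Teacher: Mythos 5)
Your sketch correctly identifies where the difficulty lies, but then leaves it unresolved: the construction of discrete correctors $p_{k,N}$ under the sole hypotheses of Theorem \ref{homoge} --- which contain no periodicity or ergodicity structure --- is exactly the step you flag as the main obstacle, and your proposal does not actually carry it out. The paper avoids correctors altogether by a shorter argument tailored to the diagonal structure of $A^N$ and the $W$-Sobolev dual.

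The crucial observation you miss is that the flux $a_{kk}^N\partial_{W_k}^N u_N$ converges \emph{strongly}, not merely weakly, in $L^2_{x^k\otimes W_k}(\bb T^d)$. Indeed, by Lemma \ref{conv-forte-media} the discretized data $f_N$ converge strongly to $f$ in $H^{-1}_W(\bb T^d)$, while $u_N\to u$ strongly in $L^2(\bb T^d)$ (hence strongly in $H^{-1}_W$) by the compactness from Lemma \ref{compact-interp}; therefore $\nabla^N A^N\nabla_W^N u_N = f_N - \lambda u_N$ converges strongly in $H^{-1}_W(\bb T^d)$, and the characterization of the dual norm as a sum of component norms (Lemma \ref{caracterization discrete}) upgrades each component $a_{kk}^N\partial_{W_k}^N u_N$ to strong $L^2_{x^k\otimes W_k}$-convergence to some $v_{0,k}$. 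With the flux strongly convergent, the identification step becomes elementary: pair it with the weakly convergent sequence $1/a_{kk}^N\rightharpoonup b_k$ via Corollary \ref{compensated1} (one factor strong, one weak) to obtain $\partial_{W_k}^N u_N = (1/a_{kk}^N)(a_{kk}^N\partial_{W_k}^N u_N)\rightharpoonup b_k v_{0,k}$; since also $\partial_{W_k}^N u_N\rightharpoonup\partial_{W_k}u$, uniqueness of weak limits gives $v_{0,k}=b_k^{-1}\partial_{W_k}u$, and the homogenized coefficient is $a_{kk}=1/b_k$. No oscillating test functions and no div-curl identity between $u_N$ and a corrector are required. This shortcut depends essentially on the matrices being diagonal and on the isometric structure of $H^{-1}_W$, and would not extend to non-diagonal $A^N$ --- which is precisely the regime where the Murat--Tartar corrector construction you propose is the right tool.
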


\begin{proof}[Proof of Theorem \ref{homoge}]
Fix $f\in H_W^{-1}({\bb T}^d)$, and consider the problem
\begin{equation}\label{mais}
\lambda u_N - \nabla^N A^N \nabla_W^Nu_N = f_N,
\end{equation}
where $f_N$ is the discretization obtained in equation \eqref{discrete-functional}. From Lemma \ref{conv-forte-media}, $f_N\to f$ strongly in $H^{-1}_W(\bb T^d)$. 

Using Lemma \ref{limitacaouniforme}, there exists a unique weak solution $u_N$ of the problem above such that its $H_{1,W}^N$-norm is uniformly bounded in $N$. That is, there exists a constant $C>0$ such that
$$\|u_N\|_{H_{1,W}(\bb T_N^d)} \leq C .$$
From Lemma \ref{compact-interp}, there exists a convergent subsequence of $u_N$ (which we will also denote by $u_N$) such that
$$u_N\to u, \qquad \hbox{weakly in}\quad {H}_{1,W}({\bb T}^d).$$

In particular,

\begin{equation}\label{h2}
\partial_{W_k}^N u_N\stackrel{N\to\infty}{\longrightarrow}\partial_{W_k} u\quad\hbox{weakly in}\quad L^2_{x^k\x W_k}(\bb T^d).
\end{equation}

Applying \eqref{mais} to $u_N$, we obtain
$$\lambda \|u_N\|_{L^2(\bb T_N^d)}^2 + \sum_{k=1}^d \|a_{kk}^N\partial_{W_k}^Nu_N\|_{L^2_{x^k\otimes W_k}(\bb T_N^d)}^2 = f_N(u_N) \leq \|f_N\|_{H^{-1}_W(\bb T_N^d)}\|u_N\|_{H^1_W(\bb T_N^d)}.$$
Thus, for each $k=1,\ldots, d$, and using Lemma \ref{limitacaouniforme}, we have
$$\|a_{kk}^N \partial_{W_k}^Nu_N\|_{L^2_{x^k\otimes W_k}(\bb T_N^d)}^2 \leq \|f_N\|_{H^{-1}_W(\bb T_N^d)} \cdot C.$$
Using the same argument (from weak convergence of the functionals) we can find a constant $C_1\geq 0$ such that $\|f_N\|_{H_W^{-1}(\bb T_N^d)}\leq C_1$. Therefore,
$$\|a_{kk}^N\partial_{W_k}^Nu_N\|_{L^2_{x^k\otimes W_k}(\bb T_N^d)} \leq C_1 C.$$
This, in turn, implies that
$$\|a_{kk}^N\widetilde{\partial_{W_k}^Nu_N}\|_{L^2_{x^k\otimes W_k}(\bb T^d)} = \|a_{kk}^N\partial_{W_k}^Nu_N\|_{L^2_{x^k\otimes W_k}(\bb T_N^d)}$$
is uniformly bounded in $N$. Thus, since $L^2_{x^k\otimes W_k}(\bb T^d)$ is a separable Hilbert space, we can find a further subsequence (also denoted by $u_N$), such that
\begin{equation}\label{h1}
a_{kk}^N \partial_{W_k}^N u_N\to v_{0,k}\qquad \hbox{weakly in}\quad L^2_{x^k\x W_k}({\bb T}^d),
\end{equation}
as $N\to\infty$, $v_{0,k}$ being some function in $L^2_{x^k\x W_k}(\bb T^d)$.

 Since $(u_N)$ is uniformly bounded in the Sobolev-norm and  $L^2(\bb T^d)$ is precompact in this space we have
$$u_N \to u \qquad \text{strongly in}\,\, L^2(\bb T^d)\, .$$
 In particular, 
$$u_N \to u \qquad \text{strongly in}\,\,  H^{-1}_W(\bb T^d).$$ 
On the other hand, $(\lambda u_N- \nabla^N A^N \nabla_W^Nu_N)$
converges strongly (to $f$) in $H^{-1}_W(\bb T^d)$. Therefore,
$$\nabla^N A^N \nabla_W^N u_N\to v_{0}\qquad \text{strongly in} \,\,H^{-1}_W(\bb T^d).$$ From the very definition of the functional $\nabla^N A^N \nabla_W^Nu_N$, the previous convergence means that
for each $k$,
$$a_{kk}^N \partial_{W_k}^N u_N \to v_{0,k}\hbox{~~strongly in~~}L^2_{x^k\x W_k}({\bb T}^d).$$

 Denote by $b_k\in L^2_{x^k\x W_k}({\bb T}^d)$ the  weak limit of the sequence $(1/a^N_{kk})$:
$$1/a_{kk}^N \to b_k\hbox{~~weakly in~~}L^2_{x^k\x W_k}({\bb T}^d).$$

Since $\theta^{-1} < a_{kk}^N(x) < \theta$, we have that  $1/a_{kk}^N$ is uniformly bounded, and $1/a_{kk}^N>\theta$. Further, $b_k>0$ $d(x^k\otimes W_k)$-a.e. . In fact, if there exists a measurable set $A$ with positive $d(x^k\otimes W_k)$
measure, such that $b_k=0$ in $A$,  take the function $\phi = 1_A$. Then,
$$0 < \theta \hbox{measure}(A) \leq \int \widetilde{\frac{1}{a_k^N}} \phi d(x^k\otimes W_k) \to \int b_k 1_A d(x^k\otimes W_k) = 0.$$
A contradiction. Thus $b_k>0$.\\

From the Compensated Compactness Theorem (take $q_k^N = a_{kk}^N \partial_{W_k}^N u_N$ and $v_k^N = 1/a_{kk}^N$ in Corollary \ref{compensated1}):
$$\frac{1}{a_{kk}^N} a_{kk}^N \partial_{W_k}^N u_N \to b_k v_{0,k}, \qquad \text{weakly in} \,\, L^2_{x^k\x W_k}({\bb T}^d).$$
On the other hand,
$$\frac{1}{a_{kk}^N} a_{kk}^N \partial_{W_k}^N u_N = \partial_{W_k}^N u_N   \to \partial_{W_k}u, \qquad \text{weakly in}\,\, L^2_{x^k\x W_k}({\bb T}^d)\,. $$
From uniqueness of the weak limit, we have that
$\partial_{W_k}u = b_k v_{0,k}.$
Since $b_k\neq 0$, we have that
$$v_{0,k} = \frac{1}{b_k} \partial_{W_k}u.$$

Thus, we can summarize our findings:
\begin{eqnarray*}
u_N\to u \qquad \text{strongly in} \,\, L^2_{x^k\otimes W_k}(\bb T^d)\, , \\ 
\partial_{W_k}u_N\to \partial_{W_k}u  \qquad \text{weakly in}\,\, L^2_{x^k\otimes W_k}(\bb T^d), \qquad \text{and}\,  \\
a_k^N\partial_{W_k}u_N \to \frac{1}{b_k}\partial_{W_k}u \qquad \text{strongly in} \,\, L^2_{x^k\otimes W_k}(\bb T^d)\,.
\end{eqnarray*}

Therefore,
$u$ solves the problem
$$\lambda u - \nabla A\nabla_W u = f,$$
where $A$ is the diagonal matrix with entries given by $1/b_k$.

To conclude the proof it remains to be shown that we can pass from the subsequence to the sequence. This follows from uniqueness of weak solutions of the problem \eqref{h4}, see \cite[Proposition 3.4]{SV}. The fact that any converging subsequence is a solution to the same problem, and the fact that
$u_N$ is uniformly bounded in the Sobolev norm, thus we can find a convergent subsequence (thus a sequence that do not converge to the solution, must converge to somewhere else, since the limit point is also a solution, uniqueness shows the result).
\end{proof}

We will now provide an example of a very large class of functions that admit homogenization. Recall the definition of the space $\mathbb{M}_W$ given
below Remark \ref{nablaAnablaW}.

\begin{proposition}\label{admitshomog}
Let $A = (a_{kk}) \in\mathbb{M}_W$ be a diagonal matrix such that $\theta^{-1}\leq a_{kk}$ for some  $\theta>0$ and,  the discretization $A^N = (a_{kk}^N)$ be the sequence of diagonal matrix  obtained from \eqref{discrete L21}.
Then, the sequence $(A^N)$ admits a homogenization.
\end{proposition}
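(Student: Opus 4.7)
Since $A \in \mathbb{M}_W$, each $a_{kk}$ belongs to $C_W(\bb T^d)$, which by Lemma \ref{right-cont-bounded} is a space of bounded functions; combined with the hypothesis $a_{kk} \geq \theta^{-1}$, we may (after enlarging $\theta$ if necessary, and taking the maximum over $k$) assume $\theta^{-1} \leq a_{kk}(y) \leq \theta$ for every $y \in \bb T^d$ and every $k$. The plan is simply to verify the three hypotheses of Theorem \ref{homoge} for the sequence $(A^N)$; as a by-product, the identification of the limit carried out at the end of the proof of Theorem \ref{homoge} will show that the homogenization of $A^N$ is $A$ itself.

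First I would check the discrete ellipticity bound. The formula \eqref{discrete L21} writes $a_{kk}^N(x)$ as the mean of $a_{kk}$ over the cell $Q_N(x)$ with respect to the probability measure obtained by normalizing $d(y^k \otimes W_k)$ restricted to $Q_N(x)$, the normalizing constant being $N^{d-1}/[W_k(x_k+1/N) - W_k(x_k)]$, which is positive since $W_k$ is strictly increasing. Since the integrand lies in $[\theta^{-1},\theta]$, so does the average; hence $\theta^{-1} \leq a_{kk}^N(x) \leq \theta$ for every $x \in \frac{1}{N}\bb T_N^d$.

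Next, the two weak convergences. Boundedness of $a_{kk}$ together with the finiteness of the measure $d(x^k \otimes W_k)$ on $\bb T^d$ place $a_{kk}$ in $L^2_{x^k \otimes W_k}(\bb T^d)$, so Lemma \ref{conv-forte-media} yields the strong convergence $\widetilde{a_{kk}^N} \to a_{kk}$ in $L^2_{x^k \otimes W_k}(\bb T^d)$, which gives the first required weak convergence. To pass to reciprocals, observe that the piecewise-constant interpolation commutes with inversion, so $\widetilde{1/a_{kk}^N} = 1/\widetilde{a_{kk}^N}$, and exploit the identity
\[
\frac{1}{\widetilde{a_{kk}^N}} - \frac{1}{a_{kk}} \;=\; \frac{a_{kk} - \widetilde{a_{kk}^N}}{a_{kk}\,\widetilde{a_{kk}^N}},
\]
together with the uniform lower bound $\theta^{-1}$ on both factors of the denominator, to obtain the pointwise Lipschitz estimate $\bigl| 1/\widetilde{a_{kk}^N} - 1/a_{kk} \bigr| \leq \theta^2 \bigl| a_{kk} - \widetilde{a_{kk}^N} \bigr|$. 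Integrating against $d(x^k \otimes W_k)$ shows $1/a_{kk}^N \to 1/a_{kk}$ strongly, hence weakly, in $L^2_{x^k \otimes W_k}(\bb T^d)$.

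With all three hypotheses verified, Theorem \ref{homoge} applies and $(A^N)$ admits a homogenization. Inspecting the final step of its proof, the homogenized matrix is diagonal with entries $1/b_k$, where $b_k$ is the weak limit of $1/a_{kk}^N$; in our case $b_k = 1/a_{kk}$, so the homogenization of $A^N$ is $A$ itself. The only mildly nontrivial point is the passage from convergence of $a_{kk}^N$ to convergence of $1/a_{kk}^N$, which is controlled comfortably by the uniform ellipticity; everything else reduces to invoking the discretization convergence Lemma \ref{conv-forte-media} and the main homogenization Theorem \ref{homoge}.
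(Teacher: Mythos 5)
Your proof is correct, and the overall plan—verify the ellipticity bound and the two weak $L^2_{x^k\otimes W_k}$ convergences required by Theorem \ref{homoge}—is exactly the one the paper follows. The mechanics differ in two small ways. First, for the convergence $\widetilde{a_{kk}^N}\to a_{kk}$, the paper argues directly: right-continuity of $a_{kk}$ gives pointwise convergence of the cell averages, and boundedness (Lemma \ref{right-cont-bounded}) then lets one apply dominated convergence. You instead place $a_{kk}$ in $L^2_{x^k\otimes W_k}(\bb T^d)$ and invoke Lemma \ref{conv-forte-media}; this neatly sidesteps the somewhat delicate pointwise argument (which needs care at atoms of $W_k$, where the cell average concentrates mass on the jump), since that lemma's proof reduces to continuous functions by density together with the contraction property of the averaging operator. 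Second, for $1/a_{kk}^N$, the paper repeats the pointwise-plus-DCT argument, whereas you observe that piecewise-constant interpolation commutes with taking reciprocals and then use the algebraic Lipschitz bound $|1/\widetilde{a_{kk}^N}-1/a_{kk}|\le \theta^2|a_{kk}-\widetilde{a_{kk}^N}|$ to transfer the strong convergence. Both paths are valid; yours recycles existing machinery, while the paper's is more self-contained. Your closing remark that the homogenized matrix is $A$ itself (since $b_k=1/a_{kk}$ in the last step of Theorem \ref{homoge}) is correct and is a nice explicit identification that the paper leaves implicit.
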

\begin{proof}
It is clear that $0<\theta^{-1}\leq a_{kk}^N$. Further, it is clear that the right-continuity implies the pointwise convergence
of $\widetilde{a}_{kk}^N$ to $a_{kk}$. Finally, from Lemma \ref{right-cont-bounded}, the functions $a_{kk}$ are bounded, and thus the sequences $a_{kk}^N$ are bounded.
From the Dominated Convergence Theorem
$$a_{kk}^N \to a_{kk} \hbox{~~strongly in~~}L^2_{x^k\otimes W_k}(\bb T^d).$$
On the other hand, we also have the pointwise convergence of $\widetilde{1/a_{kk}^N}$ to $1/a_{kk}$, and the bound $1/a_{kk}^N\leq \theta$ implies
that
$$1/a_{kk}^N \to 1/a_{kk} \hbox{~~strongly~~}L^2_{x^k\otimes W_k}(\bb T^d).$$
Therefore, the result follows from Theorem \ref{homoge}.
\end{proof}

\subsection{Homogenization of Random difference operators}\label{rdo}

In this subsection we consider the homogenization problem when the matrix $A^N$ represents a random environment. More precisely, we focus on the analysis of the asymptotic behavior of the sequence $(u_N)$ given by solutions of the equations
\begin{equation*}
\lambda u_N - \nabla^N A^N \nabla_W^N u_N = f^N,
\end{equation*}
where $f^N$ are fixed functions defined on $\frac1N\bb T^d_N$, $\; \nabla^N = (\partial_{x_1}^N,\ldots,\partial_{x_d}^N)$ and $\nabla_W^N = (\partial_{W_1}^N,\ldots,\partial_{W_d}^N)$ are the difference operators and, the random diagonal matrix $A^N = (a_{kk})_{d\times d}$ of order $d$ represent the statistically homogeneous rapidly oscillating coefficients.
Therefore these equations are driven by the random difference operators $\nabla^N A^N \nabla_W^N$, and to fully understand them, we need to understand the random matrices $A^N$.
Thus, let $(\Omega,\mc F, \mu)$ be a standard probability space and $\{ T_x : \Omega \to \Omega; x\in \bb Z^d\}$
be a group of $\mc F$-measurable and ergodic transformations which preserve the measure $\mu$:
\begin{itemize}
\item $ T_x : \Omega \to \Omega$ is $\mc F$-measurable for all $x \in \bb Z^d$,
\item $\mu(T_x \textbf{A}) = \mu(\textbf{A})$, for any $\textbf{A} \in \mc F$ and $x\in \bb Z^d$,
\item $T_0 = \textit{I}\;,\; \;T_x\circ T_y = T_{x+y}$,
\item Any  $f\in L^1(\Omega)$ such that $f(T_x\omega)=f(\omega)\;\; \mu$-a.e.,  for each $x\in \bb Z^d$,
is equal to a constant $\mu$-a.e..
\end{itemize}
Note that the last condition implies that the group $T_x$ is ergodic. We call the underlying probability space $(\Omega,\mc F, \mu)$ \emph{random environment},
and a point $\omega \in \Omega$ a \emph{realization} of the random environment.

Let us now introduce the vector-valued $\mc F$-measurable functions $\{b_{kk}(\omega) ; k=1,\ldots, d\}$ such that there exists
$\theta >0$ with 
\begin{equation}\label{unif-elip}
\theta^{-1} \le b_{kk}(\omega)\le \theta,
\end{equation}
for all $\omega \in \Omega$ and $ k= 1,\ldots, d$.  Then, define the \emph{random} diagonal matrices $B^N$ whose elements  are given by
\begin{equation}
\label{AN}
b^N_{kk}(x):= b_{kk}(T_{Nx}\omega)\;,\;\;x\in \frac{1}{N}\bb T^d_N\;,\;\;k = 1, \ldots , d.
\end{equation}

Let us show  some weak convergences associated the random environment $(b_{kk}^N)$ defined in \eqref{AN}.
First, note that by Birkhoff Ergodic Theorem, we have,
\begin{equation}
\label{bir}
b_{kk}^N\longrightarrow E[b_{kk}] \qquad \text{weakly in}\, \, L^2(\bb T^d)\qquad \text{a.s.,}
\end{equation}
for $k = 1, \dots, d$. We will need an similar result for $L^2_{x^k\otimes W_k}(\bb T^d)$.

Denote by $\mu_{W_k}$ the measure induced by function $W_k$.
By Lebesgue decomposition, there exist, function $g$ such that,
$$\mu_{W_k} = g\lambda + \lambda^\perp $$
where $g\lambda$ and $\lambda^\perp $ are singular  measures and  $\lambda$ denotes de Lebesgue measure.
Let $V_k\subset \bb T$ be the support of $\lambda^\perp $ and  
${\mf V}^k = \bb T\times\ldots \times  \bb T\times V_k\times \bb T \times \ldots\times \bb T\subset \bb T^d$, $V_k$ in the $k$-th component.

Define $a^N_k: \frac1N\bb T^d_N \to \bb R$ as
\begin{equation}\label{ANN}
a^N_{kk}(x) \;=\;
\begin{cases}
b^N_{kk}(x) & \text{ if } {\mf V}^k\cap Q_N(x)= \emptyset,\\
E[b_{kk}] & \text{ if } {\mf V}^k\cap Q_N(x)\neq \emptyset 
\end{cases}
\end{equation}

Here $Q_N(x)$ denotes the cell in $\bb T^d$ with vertex in $x\in \frac 1N\bb T^d_N$,
$$Q_N(x) = \{ y\in \bb T^d;\, 0\leq y_i-x_i\leq 1/N\}\, . $$

Define the random diagonal matrices $A^N = (a^N_{kk})$ and consider the problem
\begin{equation}\label{pdr}
\lambda u_N - \nabla^N A^N \nabla_W^N u_N = f^N,
\end{equation}

The main result of this subsection is

\begin{theorem}\label{homoge-random}
Let $A^N$ be a sequence random matrices, as defined previously in \eqref{pdr}. Then, almost surely, $A^N(\omega)$ admits a homogenization, where the homogenized matrix $A$ does not depend on the realization $\omega$.
\end{theorem}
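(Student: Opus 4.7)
The strategy is to reduce the theorem to Theorem \ref{homoge} by verifying its hypotheses almost surely for the random matrices $A^N(\omega)$, and then read off from its proof that the resulting homogenized matrix is in fact deterministic. The uniform ellipticity $\theta^{-1}\le a_{kk}^N\le\theta$ is immediate, because both $b_{kk}(T_{Nx}\omega)$ and $E[b_{kk}]$ lie in $[\theta^{-1},\theta]$ by \eqref{unif-elip}. The real work is to prove that, for each $k$, both $a_{kk}^N$ and $1/a_{kk}^N$ converge weakly in $L^2_{x^k\otimes W_k}(\bb T^d)$, almost surely, to deterministic limits.

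To establish this weak convergence I would test against a continuous $\phi$ and split the integral along the Lebesgue decomposition $d(x^k\otimes W_k) = g(x_k)\,dx + d(x^k\otimes \lambda^\perp)$. On the singular part the construction \eqref{ANN} produces an exact identity: every cell $Q_N(x)$ meeting ${\mf V}^k$ has $\widetilde{a_{kk}^N}\equiv E[b_{kk}]$, hence
\begin{equation*}
\int_{\bb T^d}\widetilde{a_{kk}^N}\,\phi\;d(x^k\otimes \lambda^\perp)\;=\;E[b_{kk}]\int_{\bb T^d}\phi\;d(x^k\otimes \lambda^\perp)
\end{equation*}
for every $N$. For the absolutely continuous part, the modification in \eqref{ANN} occurs only on the union $A_N$ of cells meeting ${\mf V}^k$; since $g\lambda \perp \lambda^\perp$ forces $g=0$ $\lambda$-a.e.\ on $V_k$, and since $\mathbf{1}_{A_N}\to \mathbf{1}_{V_k\times\bb T^{d-1}}$ pointwise off $V_k\times\bb T^{d-1}$, dominated convergence gives $\int_{A_N} g\,dx\to 0$. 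Combined with the uniform bound $|\widetilde{a_{kk}^N}-\widetilde{b_{kk}^N}|\le 2\theta$, this shows that $\widetilde{a_{kk}^N}$ and $\widetilde{b_{kk}^N}$ are asymptotically interchangeable when integrated against $g\,dx$.

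It is then enough to prove $\widetilde{b_{kk}^N}\to E[b_{kk}]$ weakly in $L^2(\bb T^d)$. This is the standard weak statement accompanying Birkhoff's theorem for the ergodic $\bb Z^d$-action $\{T_x\}$: the ergodic averages $N^{-d}\sum_{y\in\Lambda_N}b_{kk}(T_y\omega)$ converge to $E[b_{kk}]$ on every rational box, so testing $\widetilde{b_{kk}^N}$ against a countable determining family of rectangle indicators yields weak convergence on a full-measure event, and the uniform bound $\|\widetilde{b_{kk}^N}\|_\infty\le\theta$ upgrades it to weak $L^2$-convergence. The same argument applied to $(1/b_{kk})$---which satisfies identical ellipticity bounds and is invariant under the same group---gives $1/a_{kk}^N \to E[1/b_{kk}]$ weakly in $L^2_{x^k\otimes W_k}(\bb T^d)$, almost surely.

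Theorem \ref{homoge} now supplies the homogenization. Inspecting the argument there, the entries of the limiting matrix $A$ are precisely $1/b_k$, where $b_k$ is the weak $L^2_{x^k\otimes W_k}$-limit of $1/a_{kk}^N$. In our setting $b_k=E[1/b_{kk}]$, a deterministic constant, so $A=\operatorname{diag}\bigl(1/E[1/b_{kk}]\bigr)$ does not depend on $\omega$. The main obstacle is the interplay with the singular part of $\mu_{W_k}$: Birkhoff's theorem naturally governs Lebesgue averages but says nothing about integration against $\lambda^\perp$, and the definition \eqref{ANN} is engineered precisely so that the singular part becomes exact and only the absolutely continuous part is left, where the classical ergodic arguments apply.
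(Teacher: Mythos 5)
Your reduction to Theorem \ref{homoge} is the paper's own route (via Lemma \ref{cre}), and the treatment of $a_{kk}^N$ is essentially correct: on cells meeting ${\mf V}^k$ the construction \eqref{ANN} pins $\widetilde{a_{kk}^N}$ to the constant $E[b_{kk}]$, so the singular piece is exact and Birkhoff handles the absolutely continuous piece, giving the constant weak limit $E[b_{kk}]$. But your step for the reciprocal is wrong. You cannot apply ``the same argument'' to $1/b_{kk}$, because $1/a_{kk}^N$ is not a fresh discretization of $1/b_{kk}$ via \eqref{ANN}; it is determined by the already-fixed \eqref{ANN}. On the cells meeting ${\mf V}^k$, the construction forces $a_{kk}^N = E[b_{kk}]$, hence $1/a_{kk}^N = 1/E[b_{kk}]$ there --- not $E[1/b_{kk}]$. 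By Jensen these two constants differ unless $b_{kk}$ is a.s.\ constant. Consequently the weak limit of $1/a_{kk}^N$ in $L^2_{x^k\otimes W_k}(\bb T^d)$ is the nonconstant function $B_{kk}$ of Lemma \ref{cre}, equal to $1/E[b_{kk}]$ on $\overline{{\mf V}}^k$ and $E[1/b_{kk}]$ off it, and the homogenized entries are $1/B_{kk}$, i.e.\ $E[b_{kk}]$ on $\overline{{\mf V}}^k$ and $1/E[1/b_{kk}]$ elsewhere --- not the constant matrix $\operatorname{diag}(1/E[1/b_{kk}])$ you state.

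This error does not endanger the theorem itself: the weak limit of $1/a_{kk}^N$ still exists a.s.\ and is deterministic, so the hypotheses of Theorem \ref{homoge} hold and the resulting $A$ is independent of $\omega$. But the explicit identification of $A$ in your last paragraph is incorrect, and the source of the mistake --- treating the patched construction as if it commuted with taking reciprocals --- should be fixed. The correct statement is exactly the second part of Lemma \ref{cre}, which records the two-valued limit $B_{kk}$.
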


\begin{proof}The proof follows from Lemma \ref{cre} that ensures the hypothesis of the Theorem \ref{homoge} are valid for this sequence.
\end{proof}

We conclude this subsection with following result.
\begin{lemma}\label{cre}
Let $a^N_{kk}: \frac1N\bb T^d_N \to \bb R$ be as defined above. Then,
\begin{equation*}
a^N_{kk} \longrightarrow E[b_{kk}] \qquad \text{weakly in}\, \, L^2_{x^k\otimes W_k}(\bb T^d)\qquad \text{a.s.,}
\end{equation*}
and
\begin{equation*}
1/a^N_{kk} \longrightarrow B_{kk} \qquad \text{weakly in}\, \, L^2_{x^k\otimes W_k}(\bb T^d)\qquad \text{a.s.,}
\end{equation*}
where the function $B_{kk}$ is given by
$$B_{kk}(x) \;=\;
\begin{cases}
E[1/b_{kk}] & \text{ if } {\mf V}^k\cap Q_N(x)= \emptyset,\\
1/E[b_{kk}] & \text{ if } {\mf V}^k\cap Q_N(x)\neq \emptyset .
\end{cases}
 $$
\end{lemma}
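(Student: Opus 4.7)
The plan rests on two ingredients: (i) Birkhoff's ergodic theorem applied to the bounded functions $b_{kk}$ and $1/b_{kk}$, giving $\mu$-a.s.\ that
\[
\int_{\bb T^d}\widetilde{b}_{kk}^N\,\psi\,dx\;\longrightarrow\;E[b_{kk}]\int_{\bb T^d}\psi\,dx,\qquad
\int_{\bb T^d}\widetilde{(1/b_{kk}^N)}\,\psi\,dx\;\longrightarrow\;E[1/b_{kk}]\int_{\bb T^d}\psi\,dx
\]
for every $\psi\in L^1(\bb T^d)$, which is the standard weighted refinement of \eqref{bir} available thanks to the uniform bound $\theta^{-1}\le b_{kk}^N\le\theta$; and (ii) the Lebesgue decomposition $\mu_{W_k}=g\,d\lambda+d\lambda^\perp$, which splits the product measure as $d(x^k\otimes W_k)=g(x_k)\,dx+d(x^k\otimes\lambda^\perp)$, the singular part being concentrated on $\mf V^k$.

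Given $\phi\in L^2_{x^k\otimes W_k}(\bb T^d)$, Cauchy--Schwarz applied to $\phi\sqrt{g}$ and $\sqrt{g}$ in $L^2(\bb T^d,dx)$ yields $\phi g\in L^1(\bb T^d)$. The plan is to split the test integral against $\widetilde{a}_{kk}^N$ according to the two parts of the measure. The singular piece is handled exactly for every $N$: every point of $\mf V^k$ lies in some cell $Q_N(x)$ intersecting $\mf V^k$, so by \eqref{ANN} one has $\widetilde{a}_{kk}^N\equiv E[b_{kk}]$ on $\mf V^k$ and the singular integral equals $E[b_{kk}]\int\phi\,d(x^k\otimes \lambda^\perp)$.

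For the absolutely continuous piece, set $V_N^k=\bigcup\{Q_N(x):Q_N(x)\cap\mf V^k\neq\emptyset\}$. Since $V_N^k$ is contained in the $\sqrt{d}/N$-neighbourhood of the closed set $\mf V^k$, outer regularity of Lebesgue measure gives $\lambda(V_N^k\setminus\mf V^k)\to 0$. Writing
\[
\widetilde{a}_{kk}^N=\widetilde{b}_{kk}^N+\bigl(E[b_{kk}]-\widetilde{b}_{kk}^N\bigr)\mathbf{1}_{V_N^k}
\]
and testing against $\phi g$, the main piece converges by Birkhoff to $E[b_{kk}]\int\phi g\,dx$; the correction, split as an integral over $\mf V^k$ plus one over $V_N^k\setminus\mf V^k$, vanishes --- the first by Birkhoff applied to the fixed test function $\mathbf{1}_{\mf V^k}\phi g\in L^1$, the second by the uniform bound $2\theta\int_{V_N^k\setminus\mf V^k}|\phi g|\,dx\to 0$ via dominated convergence. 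Adding the two parts yields the first weak convergence.

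The second convergence follows verbatim with $b_{kk}$ replaced by $1/b_{kk}$: on $\mf V^k$ one has $1/\widetilde{a}_{kk}^N\equiv 1/E[b_{kk}]=B_{kk}$, and off $\mf V^k$ Birkhoff yields weak convergence to $E[1/b_{kk}]=B_{kk}$, matching the piecewise definition of $B_{kk}$. The main obstacle is the absolutely continuous step, where the modification of $a_{kk}^N$ lives on the thickened envelope $V_N^k$ instead of $\mf V^k$ itself; one must therefore pass to the limit through the boundary correction $V_N^k\setminus\mf V^k$, and here closedness of $V_k=\mathrm{supp}\,\lambda^\perp$ together with outer regularity of Lebesgue measure enter essentially.
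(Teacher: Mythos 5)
Your proof is correct and follows essentially the same strategy as the paper's: split the test integral via the Lebesgue decomposition $d(x^k\otimes W_k)=g\,dx+d(x^k\otimes\lambda^\perp)$, observe that $\widetilde a^N_{kk}\equiv E[b_{kk}]$ on the support of the singular part, and treat the absolutely continuous part by Birkhoff together with the fact that $\mathbf 1_{\mf V^k_N}\to\mathbf 1_{\mf V^k}$ (equivalently $\lambda(\mf V^k_N\setminus\mf V^k)\to 0$, which hinges on $\mf V^k$ being closed). The only cosmetic difference is that you decompose $\widetilde a^N_{kk}=\widetilde b^N_{kk}+(E[b_{kk}]-\widetilde b^N_{kk})\mathbf 1_{\mf V^k_N}$ algebraically and bound the correction, whereas the paper splits the domain into $\mf V^k_N$ and its complement and invokes weak $L^2$ convergence of $b^N_{kk}\mathbf 1_{[\mf V^k_N]^c}$; the phrase ``outer regularity'' is a slight misnomer for what is really pointwise convergence of indicators plus dominated convergence, but the conclusion you draw from it is correct.
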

\begin{proof}
Following the notation introduced above, 
let ${\mf V}^k_N = \bigcup_{ {\mf V}^k\cap Q_N(x)\neq \emptyset } Q_N(x)$. We have,
$$ 1_{{\mf V}^k_N} \to 1_{{\overline {\mf V}}^k} \qquad \text{pointwise}.$$
Therefore,
\begin{equation}\label{ind}
1_{{\mf V}^k_N} \to 1_{{\overline {\mf V}}^k} \qquad \text{strongly in}\,\, L^2(\bb T^d),
\end{equation}
where ${\overline {\mf V}}^k$ stands for the closure of the set ${\mf V}^k$. Let $\phi \in \mf  D_W(\bb T^d)$ be fixed. By Lebesgue decomposition we have,
 \begin{equation*}
 \int_{\bb T^d}\widetilde{a^N_{kk}}\phi d(x^k\otimes W_k) = \int_{\bb T^d}\widetilde{a^N_{kk}}\phi gdx + \int_{\bb T^d}\widetilde{a^N_{kk}}\phi d(x^k\otimes \lambda^\perp). 
 \end{equation*}
 
 Note that the support of the measure $d(x^k\otimes \lambda^\perp)$ is confined in the set ${\mf V}^k$, defined above. Since ${\mf V}^k\subset {\mf V}^k_N$, we have that $\widetilde{a^N_{kk}}$ is almost everywhere constant, namely $\widetilde{a^N_{kk}} = E[b_{kk}]$, with respect to the measure $d(x^k\otimes \lambda^\perp)$. Thus, the second integral in the right-hand side in previous expression is equal to
 $$ \int_{\bb T^d}E[b_{kk}]\phi d(x^k\otimes \lambda^\perp)\;. $$

For other side, by \eqref{bir} and \eqref{ind}, we have
\begin{equation}\label{prod}
b_{kk}^N 1_{\left[{\mf V}^k_N\right]^c} \longrightarrow E[b_{kk}] 1_{\left[{\overline{\mf V}^k}\right]^c}  \qquad \text{weakly in}\, \, L^2(\bb T^d)\qquad \text{a.s.,} 
\end{equation}
here $X^c$ denotes the complementary set of $X$.

So, the first integral in the right-hand side in above expression is equal to
\begin{eqnarray*}
\int_{\bb T^d}\widetilde{a^N_{kk}}1_{{\mf V}^k_N}\phi gdx \, +\,  \int_{\bb T^d}\widetilde{a^N_{kk}}1_{\left[{\mf V}^k_N\right]^c}\phi gdx \, =\,
\int_{\bb T^d}E[b_{kk}]1_{{\mf V}^k_N}\phi gdx +  \int_{\bb T^d}\widetilde{b_{kk}^N}1_{\left[{\mf V}^k_N\right]^c}\phi gdx
\end{eqnarray*}
From previous convergence, \eqref{ind} and \eqref{prod},   the right hand-side in previous expression converges to
$$ \int_{\bb T^d}E[b_{kk}]1_{\overline{{\mf V}}^k}\phi gdx +  \int_{\bb T^d}E[b_{kk}]1_{\left[\overline{{\mf V}}^k\right]^c}\phi gdx  \,=\, \int_{\bb T^d}E[b_{kk}]\phi gdx\, .$$
Then, we have showed that
$$\lim_{N\to\infty} \int_{\bb T^d}\widetilde{a^N_{kk}}\phi d(x^k\otimes W_k) =  \int_{\bb T^d}E[b_{kk}]\phi d(x^k\otimes \lambda^\perp)\, +\,  \int_{\bb T^d}E[b_{kk}]\phi gdx \,=\, \int_{\bb T^d}E[b_{kk}]\phi d(x^k\otimes W_k) $$
and this concludes the proof of the first statement.

The second statement follows directly from the first by observing that $1/b_{kk}$ is also an ergodic sequence, and everything we did above may also be done
to the sequence $1/a_{kk}^N$.
\end{proof}

\section{Application}\label{aplicacao-limite}
To conclude the paper we will provide an application of a new result on probability theory which is an improvement of the result
obtained in \cite{v} in two directions: first, it considers a more general model; second, it has a more natural and simpler proof.
It is also noteworthy that the homogenization results obtained in this paper were the key results in proving the main Theorem in the article \cite{FSV}.
For each choice of diagonal matrix function $A^N$ satisfying the hypotheses of Theorem \ref{homoge}, we have a
corresponding version of hydrodynamic limit. For instance, we may obtain a version in random environment using Theorem \ref{homoge-random},
and also a version with any diagonal matrix function $A\in \mathbb{M}_W$.

%
%The result we will prove is a hydrodynamic limit for \textit{gradient processes with conductances in generical environment, for instance, in random environment or more }. The model
%considered by \cite{v} is a particular case of the model we are considering since it does not possess a random environment.

\subsection{The hydrodynamic limit} \label{HL}

We will now use the standard vocabulary in probability theory, that is, càdlàg functions means right-continuous functions
with left limits; tightness is a property regarding compactness; weak convergence is actually weak$^*$ convergence. The reader is also
referred to \cite{kl} and references therein.

We will begin by recalling some definitions. Recall in \eqref{w} the definition of function $W$, consider a sequence of  operators $\nabla^N A^N \nabla_W^N$, that satisfies the hypothesis of the Theorem \ref{homoge}. We will consider, for instance, the random environment introduced in subsection \ref{rdo}.
% and fix a typical realization $\omega \in \Omega$ of the random environment.
 For each $x\in \bb T^d_N$ and $j = 1,\ldots, d$,
define the symmetric rate $\xi_{x, x+e_j}=\xi_{x+e_j, x}$ by
\begin{equation}\label{rate}
\xi_{x, x+e_j} \;=\; \frac{a^N_j(x/N)}{N[W((x+e_j)/N) - W(x/N)]}\;=\;
\frac{a^N_j(x/N)}{N[W_j((x_j+1)/N) - W_j(x_j/N)]}.
\end{equation}
where $a^N_j(x)$ is given by \eqref{ANN}, and  ${e_1,\ldots ,e_d}$ is the canonical basis of $\bb R^d$. Also, let
$b> -1/2\;$ and
\begin{equation*}
c_{x,x+e_j}(\eta) \;=\; 1 \;+\; b \{ \eta(x-e_j) + \eta(x+2 e_j)\}\;,
\end{equation*}
where all sums are modulo $N$.

Distribute particles on $\bb T^d_N$ in such a way that each site
of $\bb T^d_N$ is occupied at most by one particle. Denote by $\eta$ the
configuration of the state space $\{0,1\}^{\bb T^d_N}=\{\eta:\bb T^d_N \to \{0,1\}\}$ so that $\eta(x) =0$
if site $x$ is vacant, and $\eta(x)=1$ if site $x$ is occupied.

The  exclusion process with conductances  is  a continuous-time Markov process
$\{\eta_t : t\ge 0\}$ with state space $\{0,1\}^{\bb T^d_N} $,
whose generator $L_N$ acts on functions $f:
\{0,1\}^{\bb T^d_N} \to \bb R$ as
\begin{equation}
\label{g4}
(L_N f) (\eta) \;=\;\sum^d_{j=1} \sum_{x \in \bb T^d_N} \xi_{x,x+e_j}c_{x,x+e_j}(\eta)\,
\{ f(\sigma^{x,x+e_j} \eta) - f(\eta) \} \;,
\end{equation}
where $\sigma^{x,x+e_j} \eta$ is the configuration obtained from $\eta$
by exchanging the variables $\eta(x)$ and $\eta(x+e_j)$:
\begin{equation}
\label{g5}
(\sigma^{x,x+e_j} \eta)(y) \;=\;
\begin{cases}
\eta (x+e_j) & \text{ if } y=x,\\
\eta (x) & \text{ if } y=x+e_j,\\
\eta (y) & \text{ otherwise}.
\end{cases}
\end{equation}

We consider the Markov process  $\{\eta_t : t\ge 0\}$ on the configurations $\{0,1\}^{\bb T^d_N}$
associated to the generator $L_N$ in the diffusive scale, i.e., $L_N$ is speeded up by $N^2$.

We now describe the stochastic evolution of the process. Let
$x=(x_1,\ldots,x_d)\in \bb T^d_N$. After a time given by an exponential distribution, at rate $\xi_{x,x+e_j}c_{x,x+e_j}(\eta)$ the occupation variables
$\eta(x)$ and $\eta(x+e_j)$ are exchanged. 
Note that only nearest neighbor jumps are allowed.
If $W$ is differentiable at $x/N\in[0,1)^d$, the rate at
which particles are exchanged is of order $1$ for each direction, but if some
$W_j$ is discontinuous at $x_j/N$ , it no longer holds.
In fact, assume, to fix ideas, that $W_j$ is discontinuous at $x_j/N$, and smooth on the segments
$(x_j/N, x_j/N + \varepsilon e_j)$ and $(x_j/N - \varepsilon e_j, x_j/N )$. Assume, also, that $W_k$ is
differentiable in a neighborhood of $x_k/N$ for $k \neq j$. In this case, the
rate at which particles jump over the bonds $\{y-e_j, y\}$, with $y_j=x_j$, is of order
$1/N$, whereas in a neighborhood of size $N$ of these bonds, particles
jump at rate $1$. Thus, note that a particle at site $y-e_j$ jumps to $y$
at rate $1/N$ and jumps at rate $1$ to each one of the $2d-1$ other options.
Particles, therefore, tend to avoid the bonds $\{y-e_j,y\}$. However,
since time will be scaled diffusively, and since on a time interval of length $N^2$ a
particle spends a time of order $N$ at each site $y$, particles will be able to cross the slower
bond $\{y - e_j, y\}$.
The scaling limits of this interacting particle systems in inhomogeneous media may, for instance,  model diffusions in which permeable membranes, at the points of discontinuities of the conductances $W$, tend to reflect particles, creating space discontinuities in the density profiles. For more details  see \cite{v}.

The effect of the factor $c_{x,x+e_j}(\eta)$ is the following: if the parameter $b$ is positive, the presence of particles in the neighboring
sites of the bond $\{x,x+e_j\}$ speeds up the exchange rate by a factor of
order one, and if the parameter $b$ is negative, the presence of particles in the neighboring sites slows down the exchange rate also by a factor of order one. More details are given in Remark \ref{taxac} below.

Let $A =(a_{jj})_{d\times d}$ be a diagonal matrix belonging to $\mathbb{M}_W$ with $a_{jj}>0, j=1,\ldots,d$, and recall from subsection \ref{ns}, the operator defined on $\mf D_W(\bb T^d)$:
\begin{equation*}
\nabla A\nabla_W := \sum_{j=1}^d \partial_{x_j}a_{jj} \partial_{W_j},
\end{equation*}
where $A\in\mathbb{M}_W$.

A sequence of probability measures $\{\mu_N : N\geq 1 \}$ on $\{0,1\}^{\bb T^d_N}$
is said to be associated to a profile $\rho_0 :\bb T^d \to [0,1]$ if for every $\delta>0$ and every function $H\in \mf D_W(\bb T^d)$:
\begin{equation}
\label{f09}
\lim_{N\to\infty}
\mu_N \left\{ \, \eta;\;\Big\vert \frac 1{N^d} \sum_{x\in\bb T^d_N} H(x/N) \eta(x)
- \int H(u) \rho_0(u) du \Big\vert > \delta \right\} \;=\; 0.
\end{equation}

Let $\gamma : \bb T^d \to [l,r]$ be a bounded density profile and consider the parabolic differential equation
\begin{equation}
\label{g03}
\left\{
\begin{array}{l}
{\displaystyle \partial_t \rho \; =\; \nabla A\nabla_W \Phi(\rho) } \\
{\displaystyle \rho(0,\cdot) \;=\; \gamma(\cdot)}
\end{array}
\right. ,
\end{equation}
where the function $\Phi:[l,r]\to\bb R$ has bounded derivative, its derivative is also away from zero, and $t\in[0,T]$, for $T>0$ fixed.

A function $\rho : \bb [0,T] \times \bb T^d \to [l,r]$
is said to be a weak solution of the parabolic differential equation \eqref{g03} if the following conditions hold.  $\Phi(\rho(\cdot,\cdot))$ and $\rho(\cdot,\cdot)$ belong to $L^2([0,T],H_{1,W}(\bb T^d))$, and we have the integral identity
\begin{eqnarray*}
\int_{\bb T^d} \rho(t,u) H(u)du - \int_{\bb T^d} \rho(0,u) H(u)du=
\int_0^t \, \int_{\bb T^d} \Phi (\rho(s,u))  \nabla A\nabla_W H(u)du\,ds  \;,\end{eqnarray*}
for every function $H\in \mf D_W(\bb T^d)$ and all $t\in[0,T]$.

Existence of such weak solutions follow from the tightness of the process proved in subsection \ref{ss1}, and from the energy estimate given in \cite[Lemma 6.2]{v}. Uniquenesses of weak solutions was proved in \cite{SV}.
 
 The main result of this Section is the following.
\begin{theorem}
\label{t02}
Fix a continuous initial profile $\rho_0 : \bb T^d \to [0,1]$ and
consider a sequence of probability measures $\mu_N$ on $\{0,1\}^{\bb
  T^d_N}$ associated to $\rho_0$, in the sense of \eqref{f09}. Then, for any $t\ge 0$,
\begin{equation*}
\lim_{N\to\infty}
\bb P_{\mu_N} \left\{ \, \Big\vert \frac 1{N^d} \sum_{x\in\bb T^d_N}
H(x/N) \eta_t(x) - \int H(u) \rho(t,u)\, du \Big\vert
> \delta \right\} \;=\; 0
\end{equation*}
for every $\delta>0$ and every function $H\in \mf D_W(\bb T^d)$. Here, $\rho$
is the unique weak solution of the non-linear equation \eqref{g03}
with $l=0$, $r=1$, $\gamma = \rho_0$ and $\Phi(\alpha) = \alpha + a
\alpha^2$.
\end{theorem}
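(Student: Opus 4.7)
The plan is to follow the entropy/empirical-measure method. Set
$$\pi^N_t(du) \;=\; \frac{1}{N^d}\sum_{x\in\bb T^d_N}\eta_t(x)\,\delta_{x/N}(du),$$
and denote by $Q^N$ the law of $(\pi^N_t)_{t\in[0,T]}$ on the Skorohod space $D([0,T],\mc M_+(\bb T^d))$. The theorem amounts to showing (i) the sequence $(Q^N)$ is tight, and (ii) every limit point $Q$ is concentrated on trajectories of the form $\pi_t(du)=\rho(t,u)du$ with $\rho$ a weak solution of \eqref{g03}. Together with the uniqueness of such solutions established in \cite{SV}, this yields convergence of the whole sequence to the Dirac mass at the unique weak solution, from which the statement of the theorem follows by a standard portmanteau argument applied to the bounded continuous functional $\pi\mapsto\<\pi,H\>$.

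Tightness is proved by Aldous' criterion applied to each real-valued process $t\mapsto \<\pi^N_t,H\>$, $H\in\mf D_W(\bb T^d)$. Dynkin's formula yields
$$\<\pi^N_t,H\>\;=\;\<\pi^N_0,H\>\;+\;\int_0^t N^2 L_N\<\pi^N_s,H\>\,ds\;+\;M^{N,H}_t,$$
with $M^{N,H}$ a martingale whose quadratic variation is $O(N^{-d})$ uniformly in $t$. The rates $\xi_{x,x+e_j}c_{x,x+e_j}$ define a gradient system: there exist local functions $h_j$ satisfying $\bb E_{\nu_\alpha}[h_j]=\Phi(\alpha)=\alpha+a\alpha^2$ such that the instantaneous current across the bond $\{x,x+e_j\}$ is the discrete spatial gradient of $h_j$. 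Two discrete summations by parts then give
$$N^2 L_N\<\pi^N_s,H\>\;=\;\frac{1}{N^d}\sum_{j=1}^d\sum_{x\in\bb T^d_N}[\partial_{x_j}^N(a_{jj}^N\partial_{W_j}^N H)](x/N)\,h_j(\tau_x\eta_s).$$
By Remark \ref{suave}, $\nabla A\nabla_W H\in C_W(\bb T^d)$ is bounded, and, using Remark \ref{nablaAnablaW} together with the regularity of $A^N$, the discrete analogue $\|\nabla^N A^N\nabla_W^N H\|_\infty$ is uniformly bounded in $N$. Since $|h_j|\le C$, the drift is uniformly bounded, and Aldous' criterion gives tightness.

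To characterize limit points, pass to a subsequence along which $Q^N\Rightarrow Q$. Standard arguments using the entropy bound with respect to the invariant product Bernoulli measure $\nu_{1/2}$ show that $Q$-almost surely the trajectory is absolutely continuous with density $\rho(t,u)\in[0,1]$. Passing to the limit in the Dynkin decomposition above then yields, for every $H\in\mf D_W(\bb T^d)$,
$$\int_{\bb T^d}\rho(t,u)H(u)\,du-\int_{\bb T^d}\rho_0(u)H(u)\,du\;=\;\int_0^t\!\!\int_{\bb T^d}\Phi(\rho(s,u))\,[\nabla A\nabla_W H](u)\,du\,ds,$$
provided we establish two facts: (a) the replacement of the local function $h_j(\tau_x\eta_s)$ by $\Phi(\rho^{\epsilon N}_s(x))$ via one- and two-block estimates in the spirit of \cite{v}; and (b) the passage from the discretization $\nabla^N A^N\nabla_W^N H$ to the continuous $\nabla A\nabla_W H$. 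Step (b) is exactly where the homogenization theory of Section \ref{sec5} enters: the strong $L^2$ convergence of the energies provided by Proposition \ref{hconvergencia}, applied through the discretization scheme of Subsection \ref{correspondencia}, identifies the limit with the homogenized matrix $A$. The energy estimate $\Phi(\rho)\in L^2([0,T],H_{1,W}(\bb T^d))$ required for the notion of weak solution follows from an adaptation of \cite[Lemma 6.2]{v} to the present setting.

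The main obstacle is step (a), the replacement lemma, in the presence of both discontinuous conductances $W$ and an oscillating (possibly random) matrix $A^N$. The delicate point is the two-block estimate, which requires a spectral-gap bound for the restriction of the symmetric part of $L_N$ to mesoscopic boxes, uniformly in the realization of $A^N$. The ellipticity assumption $\theta^{-1}\le a^N_{kk}\le\theta$ is precisely what allows a comparison with the bare-conductance model of \cite{v} and therefore yields such a uniform spectral gap. Once the replacement is in place, Theorem \ref{homoge} (and Theorem \ref{homoge-random} in the random case) handles the averaging of $A^N$ automatically, and the uniqueness of weak solutions from \cite{SV} upgrades the subsequential convergence to convergence of the entire sequence, completing the proof.
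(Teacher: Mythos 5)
There is a genuine gap, and it is located precisely at the step where you assert that ``using Remark \ref{nablaAnablaW} together with the regularity of $A^N$, the discrete analogue $\|\nabla^N A^N\nabla_W^N H\|_\infty$ is uniformly bounded in $N$.'' This is false in the generality required by the theorem. The matrices $A^N$ are only assumed to be uniformly elliptic and to satisfy the weak-convergence hypotheses of Theorem \ref{homoge}; no regularity of $A^N$ beyond that is available. In particular, in the random environment of Subsection \ref{rdo} one has $a^N_{kk}(x/N) = b_{kk}(T_x\omega)$, which oscillates on the scale of the lattice, so $\partial_{x_k}^N a^N_{kk}$ is of order $N$ and the term $(\partial_{x_k}^N a^N_{kk})(\partial_{W_k}^N H)$ appearing in $\nabla^N A^N\nabla_W^N H$ diverges. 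Consequently the drift $\int_0^t N^2 L_N\<\pi^N_s,H\>\,ds$ that you get from Dynkin's formula applied directly to $H$ is not controlled, Aldous' criterion does not close, and the later identification of the limit drift with $\int_0^t\<\Phi(\rho_s),\nabla A\nabla_W H\>\,ds$ has no basis, since $\nabla^N A^N\nabla_W^N H$ need not converge to $\nabla A\nabla_W H$ in any useful sense for rough $A^N$.

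The paper avoids exactly this obstruction by introducing the discrete resolvent corrector $H^N_\lambda$, defined as the solution of $\lambda H^N_\lambda - \nabla^N A^N\nabla_W^N H^N_\lambda = \lambda H - \nabla A\nabla_W H$, and working throughout with the auxiliary process $\Pi^{\lambda,N}_t(H) = \<\pi^N_t, H^N_\lambda\>$. The virtue of this choice is that $\bb L_N H^N_\lambda = \lambda(H^N_\lambda - H) + \nabla A\nabla_W H$ is uniformly bounded (by Lemma \ref{lmdiscreto}, Remark \ref{suave}, and Lemma \ref{right-cont-bounded}) regardless of how wild $A^N$ is, which gives tightness of $\Pi^{\lambda,N}$ and identifies the limiting drift. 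The homogenization machinery (Theorem \ref{homoge}, Proposition \ref{hconvergencia}, Corollary \ref{suave-hconv}) then enters not to justify a sup-norm bound on $\nabla^N A^N\nabla_W^N H$, but to show $\|H^N_\lambda - H\|_{L^2(\bb T^d_N)}\to 0$, which allows the transfer from $\Pi^{\lambda,N}$ to $\pi^N$ in Lemma \ref{s06} and the replacement of $\<\pi^N_\cdot, H^N_\lambda\>$ by $\<\pi_\cdot,H\>$ in the characterization of limit points. Your proposal correctly identifies the replacement lemma, the gradient structure, the role of uniqueness from \cite{SV}, and the energy estimate from \cite[Lemma 6.2]{v}, but without the corrector $H^N_\lambda$ the argument does not go through; the corrector is the essential ingredient, not an optional refinement.
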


\begin{remark}\label{taxac}
The specific form of the rates $c_{x,x+e_i}$ is not important, but two
conditions must be fulfilled. The rates must be strictly positive,
they may not depend on the occupation variables $\eta(x)$, $\eta(x+e_i)$,
but they have to be chosen in such a way that the resulting process is
\emph{gradient} (cf. Chapter 7 in \cite{kl} for the definition of
gradient processes).

We may define rates $c_{x,x+e_i}$ to obtain any polynomial $\Phi$ of the
form $\Phi(\alpha) = \alpha + \sum_{2\le j\le m} a_j \alpha^j$, $m\ge
1$, with $1+ \sum_{2\le j\le m} j a_j >0$. Let, for instance, $m=3$.
Then the rates
%\begin{align*}
$$\hat c_{x,x+e_i} (\eta)\;\; =\;\; c_{x,x+e_i} (\eta)\;\;+
b\left\{ \eta(x-2e_i) \eta(x-e_i) + \eta(x-e_i) \eta(x+2e_i) + \eta(x+2e_i)
\eta(x+3e_i)\right\},$$
%\end{align*}
satisfy the above three conditions,
where $c_{x,x+e_i}$ is the rate defined at the beginning of Section 2 and
$a$, $b$ are such that $1+2a + 3b>0$. An elementary computation shows
that
$\Phi(\alpha) = 1 + a \alpha^2 + b \alpha^3$.
\end{remark}
\subsection{Proof of Theorem \ref{t02}}

 A simple computation shows that the Bernoulli product measures
$\{\nu^N_\alpha : 0\le \alpha \le 1\}$ are invariant, in fact
reversible, for the dynamics. The measure $\nu^N_\alpha$ is obtained
by placing a particle at each site, independently from the other
sites, with probability $\alpha$. Thus, $\nu^N_\alpha$ is a product
measure over $\{0,1\}^{\bb T^d_N}$ with marginals given by
\begin{equation*}
\nu^N_\alpha \{\eta : \eta(x) =1\} \;=\; \alpha
\end{equation*}
for $x$ in $\bb T^d_N$. For more details see \cite[chapter 2]{kl}.\\

Consider the random walk $\{X_t\}_{t\ge0}$ of a particle in $\bb T^d_N$ induced by the generator $L_N$ given as follows.
Let $\xi_{x,x+e_j}$ be given by \eqref{rate}. If the particle is on a site $x\in \bb T^d_N$, it will jump to $x+e_j$ with rate $N^2\xi_{x,x+e_j}$. Furthermore, only nearest neighbor jumps are allowed.
The generator $\bb L_N$ of the random walk $\{X_t\}_{t\ge0}$ acts on functions $f:\bb T^d_N \to \bb R$ as
\begin{equation*}
\bb L_N f\left(\frac{x}{N}\right) \; =\; \sum^d_{j=1} \bb L_N^j f\left(\frac{x}{N}\right),
\end{equation*}
where,
\begin{equation*}
\bb L_N^j f\Big(\frac{x}{N}\Big) = N^2\Big\{\xi_{x,x+e_j} \Big[f\Big(\frac{x+e_j}{N}\Big) - f\Big(\frac{x}{N}\Big)\Big]  +
\xi_{x-e_j,x} \Big[f\Big(\frac{x-e_j}{N}\Big) - f\Big(\frac{x}{N}\Big)\Big]\Big\}
\end{equation*}
It is not difficult to see that the following equality holds:
\begin{equation}
\label{opdisc}
\bb L_N f(x/N) = \sum^d_{j=1}\partial^N_{x_j}(a^N_j\partial^N_{W_j}f)(x)\;:=\;\nabla^NA^N\nabla^N_Wf(x).
\end{equation}

 The counting measure $m_N$ on $N^{-1} \bb T^d_N$ is
reversible for this process.
This random walk plays an important role in the proof of the hydrodynamic limit of the process $\eta_t$.

Let $D(\bb R_+, \{0,1\}^{\bb T^d_N})$ be the path space of
c\`adl\`ag trajectories with values in $\{0,1\}^{\bb T^d_N}$. For a
measure $\mu_N$ on $\{0,1\}^{\bb T^d_N}$, denote by $\bb P_{\mu_N}$ the
probability measure on $D(\bb R_+, \{0,1\}^{\bb T^d_N})$ induced by the
initial state $\mu_N$ and the Markov process $\{\eta_t : t\ge 0\}$.
Expectation with respect to $\bb P_{\mu_N}$ is denoted by $\bb
E_{\mu_N}$.

Let $\mc M$ be the space of positive measures on $\bb T^d$ with total
mass bounded by one endowed with the weak topology. Recall that
$\pi^{N}_{t} \in \mc M$ stands for the empirical measure at time $t$.
This is the measure on $\bb T^d$ obtained by rescaling space by $N$ and
by assigning mass $1/N^d$ to each particle:
\begin{equation}
\label{f01}
\pi^{N}_{t} \;=\; \frac{1}{N^d} \sum _{x\in \bb T^d_N} \eta_t (x)\,
\delta_{x/N}\;,
\end{equation}
where $\delta_u$ is the Dirac measure concentrated on $u$.

For a function $H:\bb T^d \to \bb R$, $\<\pi^N_t, H\>$ stands for
the integral of $H$ with respect to $\pi^N_t$:
\begin{equation*}
\<\pi^N_t, H\> \;=\; \frac 1{N^d} \sum_{x\in\bb T^d_N}
H (x/N) \eta_t(x)\;.
\end{equation*}
This notation is not to be mistaken with the inner product in
$L^2(\bb T^d)$ introduced earlier. Also, when $\pi_t$ has a density
$\rho$, $\pi(t,du) = \rho(t,u) du$.

Fix $T>0$ and let $D([0,T], \mc M)$ be the space of $\mc M$-valued
c\`adl\`ag trajectories $\pi:[0,T]\to\mc M$ endowed with the
\emph{uniform} topology. Note that $\mc M$ is endowed with the weak topology, which is metrizable, since $\bb T^d$ is a compact metric space. Thus, this uniform topology is well-defined.  For each probability measure $\mu_N$ on
$\{0,1\}^{\bb T^d_N}$, denote by $\bb Q_{\mu_N}^{W,N}$ the measure on
the path space $D([0,T], \mc M)$ induced by the measure $\mu_N$ and
the process $\pi^N_t$ introduced in \eqref{f01}.

Fix a continuous profile $\rho_0 : \bb T^d \to [0,1]$ and consider a
sequence $\{\mu_N : N\ge 1\}$ of measures on $\{0,1\}^{\bb T^d_N}$
associated to $\rho_0$ in the sense \eqref{f09}. Further, we denote by $\bb Q_{W}$
the probability measure on $D([0,T], \mc M)$ concentrated on the
deterministic path $\pi(t,du) = \rho (t,u)du$, where $\rho$ is the
unique weak solution of \eqref{g03} with $\gamma = \rho_0$, $l_k=0$,
$r_k=1$, $k=1,\ldots,d$ and $\Phi(\alpha) = \alpha + b\alpha^2$.

In subsection \ref{ss1} we show that the sequence $\{\bb Q_{\mu_N}^{W,N} : N\ge
1\}$ is tight, and in subsection \ref{ss2} we characterize the limit
points of this sequence.

\subsection{Tightness}
\label{ss1}
The goal of this subsection is to prove tightness of sequence $\{\bb Q_{\mu_N}^{W,N} : N\ge 1\}$.
Fix $\lambda >0$ and consider, initially, the auxiliary $\mc
M$-valued Markov process $\{\Pi^{\lambda,N}_t : t\ge 0\}$
defined by
\begin{equation*}
\Pi^{\lambda,N}_t (H) \;=\; \< \pi^N_t,H_\lambda^N\>\;=\;
\frac{1}{N^d} \sum _{x\in \bb Z^d} H_\lambda^N(x/N)
\eta_t (x),
\end{equation*}
for $H$ in $\mf D_W(\bb T^d)$, where $H_\lambda^N$ is the unique weak solution in $H_{1,W}(\bb T_N^d)$  of
$$\lambda H_\lambda^N - \nabla^N A^N\nabla_W^N H_\lambda^N = \lambda H - \nabla A \nabla_W H,$$
with the right-hand side being understood as the restriction of the function to the lattice $\bb T_N^d$, which is well-defined, since $H\in\mf D_W(\bb T^d)$
and from Remark \ref{suave}, we have that $\nabla A\nabla_W H$ belongs to $C_W(\bb T^d)$. Thus the right-hand side belongs to $C_W(\bb T^d)$ and it is right-continuous,  Remark \ref{DWrightcontinuous}.

We first prove tightness of the process $\{\Pi^{\lambda,N}_t : 0\le t
\le T\}$. Then we show that $\{\Pi^{\lambda,N}_t
: 0\le t \le T\}$ and $\{\pi^{N}_t : 0\le t \le T\}$ are not far apart.

It is well known \cite{kl} that to prove
tightness of $\{\Pi^{\lambda,N}_t : 0\le t \le T\}$ it is enough to
show tightness of the real-valued processes $\{\Pi^{\lambda,N}_t (H) :
0\le t \le T\}$ for a set of test functions $H:\bb T^d\to \bb R$ dense
in $C(\bb T^d)$ for the uniform topology, for instance we can use $\mf D_W(\bb T^d)$.

Fix a function $H: \bb T^d \to \bb R$ in $\mf D_W(\bb T^d)$. Keep in mind that $\Pi^{\lambda,N}_t (H) = \<\pi^N_t, H_\lambda^N \>$,
and denote by $M^{N,\lambda}_t$ the martingale defined by
\begin{equation}
\label{f10}
M^{N,\lambda}_t \;=\;  \Pi^{\lambda,N}_t (H) \;-\;
\Pi^{\lambda,N}_0 (H) \;-\; \int_0^t ds \, N^2 L_N \<\pi^N_s ,
H_\lambda^N \> \;.
\end{equation}
Clearly, tightness of $\Pi^{\lambda,N}_t (H)$ follows from tightness
of the martingale $M^{N,\lambda}_t$ and tightness of the additive
functional $\int_0^t ds \, N^2 L_N \<\pi^N_s , H_\lambda^N \>$.

A long computation, albeit simple, shows that the quadratic variation
$\<M^{N,\lambda}\>_t$ of the martingale $M^{N,\lambda}_t$ is given by:
%\begin{align*}
$$ \frac{1}{N^{2d-1}}\sum^d_{j=1}\sum_{x\in \bb T^d}[\partial_{W_j}^N H^N_{\lambda}(x/N)]^2[W((x+e_j)/N) - W(x/N)]\times\int_0^t c_{x,x+e_j}(\eta_s) \, [\eta_s(x+e_j) - \eta_s(x)]^2 \, ds\;.$$
%\end{align*}
 In particular, by Lemma \ref{lmdiscreto},
\begin{equation*}
  \<M^{N,\lambda}\>_t \;\le\; \frac{C_0 t}{N^{d}} \sum^d_{j=1} \|H_\lambda^N\|_{W_j,N}^2 \;\le\; \frac{C(H)t}{\lambda N^d},
\end{equation*}
for some finite constant $C(H)$, which depends only on $H$. Thus, by
Doob inequality, for every $\lambda>0$, $\delta>0$,
\begin{equation}
\label{f02}
\lim_{N\to\infty} \bb P_{\mu_N} \left[ \sup_{0\le t\le T}
\big\vert M^{N,\lambda}_t \big\vert \, > \, \delta \right]
\;=\; 0\;.
\end{equation}
In particular, the sequence of martingales $\{M^{N,\lambda}_t : N\ge
1\}$ is tight for the uniform topology.

It remains to examine the additive functional of the decomposition
\eqref{f10}. The  generator of the exclusion process $L_N$  can be decomposed
 in terms of the generator of the random walk $\bb L_{N}$.  By a long but simple computation, we obtain that
 $N^2 L_N \<\pi^N , H_\lambda^N \>$ is equal to
\begin{eqnarray*}
\!\!\!\!\!\!\!\!\!\!\!\!\!\! &&
\sum^d_{j=1}\big \{\frac {1}{N^d} \sum_{x\in \bb T^d_N} (\bb L_N^j H_\lambda^N)(x/N)\, \eta(x)- \; \frac{b}{N^d} \sum_{x\in \bb T^d_N} (\bb L_N^j H_\lambda^N)
(x/N) (\tau_x h_{2,j}) (\eta)\big \}\;
\\
\!\!\!\!\!\!\!\!\!\!\!\!\!\! && \quad
+\; \frac{b}{N^d} \sum_{x\in \bb T^d_N} \big [ (\bb L_N^j H_\lambda^N)
((x+e_j)/N) + (\bb L_{N}^j H_\lambda^N) (x/N) \big ] \,
(\tau_x h_{1,j}) (\eta), \\
\end{eqnarray*}
where $\{\tau_x: x\in \bb Z^d\}$ is the group of translations, so that
$(\tau_x \eta)(y) = \eta(x+y)$ for $x$, $y$ in $\bb Z^d$, and the
sum is understood modulo $N$. Also, $h_{1,j}$, $h_{2,j}$ are the cylinder functions
\begin{equation*}
h_{1,j}(\eta) \;=\; \eta(0) \eta({e_j})\;,\quad h_{2,j}(\eta) \;=\; \eta(-e_j)  \eta(e_j)\;.
\end{equation*}

Since $H_\lambda^N$ is the weak solution of the discrete equation, we have by Remark \ref{diracdiscreta} that it is also a strong solution. Then, we may replace $\bb L_N H_\lambda^N$ by $U_\lambda^N
= \lambda (H_\lambda^N - H)+\nabla A\nabla_W H$ in the previous formula. In particular,
for all $0\le s<t\le T$,
\begin{equation*}
\Big\vert \int_s^t dr \, N^2 L_N \<\pi^N_r ,H_\lambda^N \> \Big\vert
\;\le\; \frac {(1+3|b|)(t-s)}{N^d} \sum_{x\in \bb T^d_N} |U_\lambda^N (x/N)|
\;.
\end{equation*}
It follows from the estimate given in Lemma \ref{lmdiscreto}, Remark \ref{suave}, Lemma \ref{right-cont-bounded}, and from Schwartz
inequality, that the right hand side of the previous expression is bounded above by $C(H,b) (t-s)$
uniformly in $N$, where $C(H,b)$ is a finite constant depending only
on $b$ and $H$. This proves that the additive part of the
decomposition \eqref{f10} is tight for the uniform topology and,
therefore, that the sequence of processes $\{\Pi^{\lambda,N}_t :N\ge
1\}$ is tight.

\begin{lemma}
\label{s06}
The sequence of measures $\{\bb Q_{\mu^N}^{W,N} : N\ge 1\}$ is tight
for the uniform topology.
\end{lemma}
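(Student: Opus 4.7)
The plan is to apply the standard $D([0,T],\mc M)$-tightness criterion: since $\mc M$ is a compact metric space, it suffices to show that for every $H$ in a dense subset of $C(\bb T^d)$ in the supremum norm, the sequence of real-valued processes $\{\<\pi_t^N,H\>:0\le t\le T\}_{N\ge 1}$ is tight in $D([0,T],\bb R)$ (see \cite[Ch.~4]{kl}). By Proposition \ref{d1}(ii), $\mf D_W(\bb T^d)$ is such a dense subset, so I fix $H\in\mf D_W(\bb T^d)$ for the rest of the argument.

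For each $\lambda>0$ I decompose
$$\<\pi_t^N,H\> \;=\; \Pi_t^{\lambda,N}(H) \;+\; \<\pi_t^N,\,H-H_\lambda^N\>,$$
where the first summand has just been shown to be tight. Using $0\le \eta_t(x)\le 1$ and Cauchy--Schwarz, the remainder satisfies the deterministic bound
$$\sup_{0\le t\le T}\bigl|\<\pi_t^N,H-H_\lambda^N\>\bigr| \;\le\; \frac{1}{N^d}\sum_{x\in\bb T_N^d}\bigl|H(x/N)-H_\lambda^N(x/N)\bigr| \;\le\; \|H-H_\lambda^N\|_{L^2(\bb T_N^d)}.$$
Thus the whole proof reduces to showing that $\|H-H_\lambda^N\|_{L^2(\bb T_N^d)}\to 0$ as $N\to\infty$, for some (equivalently, any) fixed $\lambda>0$.

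To show this, recall that $H_\lambda^N$ is the discrete weak solution of $\lambda H_\lambda^N - \nabla^N A^N\nabla_W^N H_\lambda^N = f^N$, where $f^N$ denotes the pointwise restriction to $\bb T^d_N$ of the continuous function $f := \lambda H-\nabla A\nabla_W H$. By Remark \ref{suave}, $f\in C_W(\bb T^d)$; hence $f$ is bounded (Lemma \ref{right-cont-bounded}) and right-continuous (Remark \ref{DWrightcontinuous}), so by dominated convergence $\widetilde{f^N}\to f$ in $L^2(\bb T^d)$, which gives $f^N\to f$ strongly in $H^{-1}_W(\bb T^d)$ in the sense of Subsection \ref{correspondencia}. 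By hypothesis on $A^N$ (Theorem \ref{homoge}, or Theorem \ref{homoge-random} in the random-environment setting), we have $A^N\stackrel{H}{\longrightarrow}A$, so Proposition \ref{hconvergencia} yields $\widetilde{H_\lambda^N}\to u_0$ strongly in $L^2(\bb T^d)$, where $u_0\in H_{1,W}(\bb T^d)$ is the unique weak solution of $\lambda u_0-\nabla A\nabla_W u_0 = f$. Since $H$ itself solves this equation, uniqueness of weak solutions forces $u_0=H$. Finally, the assumption $H\in\mf D_W(\bb T^d)$ lets me apply Corollary \ref{suave-hconv} to upgrade the continuous $L^2(\bb T^d)$-convergence to the discrete one $\|H-H_\lambda^N\|_{L^2(\bb T_N^d)}\to 0$, completing the proof.

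The main obstacle is the final passage from $L^2(\bb T^d)$-convergence of the piecewise-constant interpolants to convergence in the discrete $L^2(\bb T_N^d)$-norm; this is precisely what made the introduction of the space $\mf D_W(\bb T^d)$ (with its right-continuity, Remark \ref{DWrightcontinuous}, and boundedness, Lemma \ref{right-cont-bounded}) necessary, and it is the reason the proof cannot be carried out with the older space $\bb D_W$ of \cite{SV} alone.
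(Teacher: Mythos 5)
Your proof is correct and follows essentially the same route as the paper: reduce tightness of $\<\pi_t^N,H\>$ to that of $\Pi_t^{\lambda,N}(H)$ by bounding the gap in terms of $\|H_\lambda^N-H\|_{L^2(\bb T_N^d)}$, then drive that norm to zero via $H$-convergence of $A^N$, Proposition~\ref{hconvergencia} and Corollary~\ref{suave-hconv}. The only (harmless) cosmetic difference is that you establish the deterministic sup-bound directly, whereas the paper passes through Chebyshev's inequality; your write-up also spells out the intermediate steps (boundedness and right-continuity of $f=\lambda H-\nabla A\nabla_W H$, strong $H^{-1}_W$-convergence of the restrictions, identification $u_0=H$ by uniqueness) that the paper cites more tersely.
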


\begin{proof}
Fix $\lambda > 0$. It is enough to show that for every function $H\in \mf D_W(\bb T^d)$
and every $\epsilon>0$, we have
\begin{equation*}
\lim_{N\to\infty} \bb P_{\mu^N} \left[
\sup_{0\le t\le T} |\, \Pi^{\lambda,N}_t (H) -
\<\pi^N_t, H\>\, | > \epsilon
\right] \;=\;0,
\end{equation*}
whence tightness of $\pi^N_t$ follows from
tightness of $\Pi^{\lambda,N}_t$. By Chebyshev's inequality, the last expression is bounded above by
$$\frac{1}{\epsilon^2}\bb E_{\mu_N} \left[\sup_{0\le t\le T} |\, \Pi^{\lambda,N}_t (H) -
\<\pi^N_t, H\>\, |^2\right] \leq \frac{2}{\epsilon^2}\|H_\lambda^N - H\|_{L^2(\bb T_N^d)}^2,
$$
since there exists at most one particle per site. By Theorem \ref{homoge}, Proposition \ref{hconvergencia} and Corollary \ref{suave-hconv}, $\|H_\lambda^N - H\|_{L^2(\bb T_N^d)}^2\to 0$ as $N\to\infty$, and the proof follows.
\end{proof}

The reader should compare the proof given in this Section to the proof given in \cite{v}, to check that the one given here follows closely
the standard approach given, for instance, in \cite{kl}, whereas the approach given in \cite{v} is non-standard. This means that the theory
provided in this article made the study of hydrodynamic behavior of exclusion processes with conductances tractable in the standard fashion.
Thus, future work on this field will be simplified.

\subsection{Uniqueness of limit points}
\label{ss2}

We prove in this subsection that all limit points $\bb Q^*$ of the
sequence $\bb Q^{W,N}_{\mu_N}$ are concentrated on absolutely
continuous trajectories $\pi(t,du) = \rho(t,u) du$, whose density
$\rho(t,u)$ is a weak solution of the hydrodynamic equation
\eqref{g03} with $l=0$, $r=1$ and $\Phi(\alpha)=\alpha + a\alpha^2$.

We now state a result necessary to prove the uniqueness of limit points. Let, for a local function $g: \{0,1\}^{\bb Z^d} \to \bb R$,
$\widetilde g :[0,1]\to \bb R$ be the expected value of $g$ under the
stationary states:
\begin{equation*}
\widetilde g (\alpha) \;=\; E_{\nu_\alpha} [ g(\eta)]\;.
\end{equation*}
For $\ell \ge 1$ and $d$-dimensional integer $x=(x_1,\ldots,x_d)$, denote
by $\eta^{\ell} (x)$ the empirical density of particles in the box
$ \bb B_+^\ell(x)= \{(y_1,\ldots,y_d)\in\bb Z^d\;;0\le y_i-x_i < \ell\}$:
\begin{equation*}
  \eta^{\ell} (x) \;=\; \frac{1}{\ell^d}  \sum_{y\in \bb B_+^\ell(x)} \eta(y)\;.
\end{equation*}

Let $\bb Q^*$ be a limit point of the sequence $\bb Q^{W,N}_{\mu_N}$
and assume, without loss of generality, that $\bb Q^{W,N}_{\mu_N}$
converges to $\bb Q^*$.

Since there is at most one particle per site, it is clear that $\bb
Q^*$ is concentrated on trajectories $\pi_t(du)$ which are absolutely
continuous with respect to the Lebesgue measure, $\pi_t(du) =
\rho(t,u) du$, and whose density $\rho$ is non-negative and bounded by
$1$. The reader is referred to \cite[Chapter 4]{kl} for further details.

Fix a function $H\in\mf D_W(\bb T^d)$  and
$\lambda>0$.  Recall the definition of the martingale
$M^{N,\lambda}_t$ introduced in the previous Section. From \eqref{f02} we have,
for every $\delta>0$,
\begin{equation*}
\lim_{N\to\infty} \bb P_{\mu_N} \left[ \sup_{0\le t\le T}
\big\vert M^{N,\lambda}_t \big\vert \, > \, \delta \right]
\;=\; 0\;,
\end{equation*}
and from \eqref{f10}, for fixed $0<t\le T$ and $\delta>0$, we have
\begin{equation*}
\lim_{N\to\infty} \bb Q^{W,N}_{\mu_N} \left[ \,
\Big\vert \<\pi^N_t, H^N_\lambda \> \;-\;
\<\pi^N_0, H^N_\lambda \> \;-\;
\int_0^t ds \, N^2 L_N \<\pi^N_s , H^N_\lambda \>
\Big\vert \, > \, \delta \right] \;=\; 0.
\end{equation*}

Note that the expression $N^2 L_N \<\pi^N_s , H^N_\lambda\>$ has been computed in the previous subsection in terms of generator $\bb L_N$. On the other hand, $\bb L_N H_\lambda^N = \lambda H_\lambda^N - \lambda H + \nabla A\nabla_W H$. Since there is at most one particle per site, we may apply Theorem \ref{homoge} along with Proposition \ref{hconvergencia} and Corollary \ref{suave-hconv}, to replace $\<\pi^N_t,
H^N_\lambda \>$ and $\<\pi^N_0, H^N_\lambda \>$ by $\<\pi_t, H\>$ and $\<\pi_0,H\>$, respectively, and replace $\bb L_N H_\lambda^N$ by $\nabla A\nabla_W H$ plus a term that vanishes as $N\to\infty$.

Since $E_{\nu_\alpha}[h_{i,j}] = \alpha^2$, $i=1$, $2$ and $j = 1,\ldots, d$, we have by the replacement Lemma \cite[Corollary 5.4]{v} that, for every $t>0$, $\lambda>0$,
$\delta>0$, $i=1$, $2$,
%\begin{align*}
$$\lim_{\varepsilon \to 0} \limsup_{N\to\infty}
\bb P_{\mu_N} \Big[ \, \Big| \int_0^t \!\!\! ds\, \frac 1{N^d}
\sum_{x\in \bb T^d_N} \nabla A\nabla_W H  (x/N)
\times\left\{ \tau_x h_{i,j} (\eta_s) -
\left[\eta^{\varepsilon N}_s(x)\right]^2 \right\} \, \Big|
\, > \, \delta \, \Big]  \;=\; 0.$$
%\end{align*}

Since $\eta^{\varepsilon N}_s(x) = \varepsilon^{-d} \pi^N_s (\prod_{j=1}^d[x_j/N, x_j/N + \varepsilon e_j])$,
 we obtain, from the previous considerations, that
%\begin{align*}
$$\lim_{\varepsilon \to 0} \limsup_{N\to\infty} \bb Q^{W,N}_{\mu_N} \left[ \,
\Big\vert\right. \<\pi_t, H \>
-\; \<\pi_0, H \> \;-\;\left.
\int_0^t ds \, \Big\< \Phi \big(\varepsilon^{-d} \pi^N_s (\prod_{j=1}^d[\cdot, \cdot
+ \varepsilon e_j]) \big) \,,\, \nabla A\nabla_W H\Big>
\Big\vert > \delta \right] \;=\; 0\;.$$
%%\end{align*}

Using the fact that $\bb Q^{W,N}_{\mu_N}$ converges in the uniform topology to $\bb
Q^*$, we have that
%\begin{align*}
$$\lim_{\varepsilon \to 0}\bb Q^*\left[ \,
\Big\vert \<\pi_t, G_\lambda H \> \right.\;-\; \; \<\pi_0, G_\lambda H \>
-\;\int_0^t ds \, \left.\Big\< \Phi \big (\varepsilon^{-d} \pi_s (\prod_{j=1}^d[\cdot, \cdot
+ \varepsilon e_j]) \big) \,,\, U_\lambda\Big>
\Big\vert > \delta \right] \;=\; 0\;.$$
%\end{align*}

Recall that $\bb Q^{*}$ is concentrated on absolutely continuous paths
$\pi_t(du) = \rho(t,u) du$ with positive density bounded by $1$. Therefore,
$\varepsilon^{-d}\pi_s(\prod_{j=1}^d[\cdot, \cdot + \varepsilon e_j])$ converges in
$L^1(\bb T^d)$ to $\rho(s,.)$ as $\varepsilon\downarrow 0$. Thus,
\begin{eqnarray*}
\bb Q^{*} \left[ \,
\Big\vert \<\pi_t, H \> \;-\;
 \<\pi_0, H \> \;-\;
\int_0^t ds \, \< \Phi (\rho_s) \,,\, \nabla A\nabla_W H \>
\Big\vert > \delta \right] \;=\; 0.
\end{eqnarray*}
Letting $\delta\downarrow
0$, we see that, $\bb Q^{*}$ a.s.,
\begin{eqnarray*}
\int_{\bb T^d} \rho(t,u) H(u)du - \int_{\bb T^d} \rho(0,u) H(u)du=
\int_0^t \, \int_{\bb T^d} \Phi (\rho(s,u))  \nabla A\nabla_W H(u)du\,ds  \;.
\end{eqnarray*}

This identity can be extended to a countable set of times $t$. Taking
this set to be dense we
obtain, by continuity of the trajectories $\pi_t$, that it holds for all $0\le t\le T$.

From \cite[Lemma 6.2]{v}, which we may easily adapt to our setup by using the uniform ellipticity condition \eqref{unif-elip} of the environment, we may conclude that all limit points have, almost surely, finite energy, and therefore, by \cite[Lemma 4.1]{SV}, $\Phi(\rho(\cdot,\cdot))\in L^2([0,T],H_{1,W}(\bb T^d))$. Analogously, it is possible to show that
 $\rho(\cdot,\cdot)$ has finite energy and hence it belongs to $L^2([0,T],H_{1,W}(\bb T^d))$.
\begin{proposition}
\label{s15}
As $N\uparrow\infty$, the sequence of probability measures $\bb
Q_{\mu_N}^{W,N}$ converges in the uniform topology to $\bb Q_{W}$.
\end{proposition}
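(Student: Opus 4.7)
The plan is to combine the tightness result from Subsection \ref{ss1} with the characterization of limit points from Subsection \ref{ss2}, and then invoke uniqueness of weak solutions to conclude that only one limit point is possible. Since convergence in the uniform topology on $D([0,T],\mc M)$ follows from relative compactness plus uniqueness of accumulation points, this suffices.

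First I would recall, from Lemma \ref{s06}, that the family $\{\bb Q_{\mu_N}^{W,N}: N\ge 1\}$ is tight. Hence by Prohorov's theorem the sequence is relatively compact, and it is enough to prove that any weakly convergent subsequence has the same limit $\bb Q_W$. Fix such a subsequence, still denoted $\bb Q_{\mu_N}^{W,N}$, converging to some limit point $\bb Q^\ast$. Standard arguments (at most one particle per site) show $\bb Q^\ast$ is concentrated on absolutely continuous trajectories $\pi_t(du)=\rho(t,u)\,du$ with $0\le \rho\le 1$.

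Next I would identify the initial condition: because $\mu_N$ is associated to $\rho_0$ in the sense of \eqref{f09}, the projection at time $t=0$ forces $\rho(0,\cdot)=\rho_0(\cdot)$, $\bb Q^\ast$-a.s. Then, using the computations already carried out in Subsection \ref{ss2}, namely the decomposition of $N^2L_N\<\pi_s^N,H_\lambda^N\>$ via the random-walk generator $\bb L_N$, the fact that $\bb L_N H_\lambda^N = \lambda(H_\lambda^N-H)+\nabla A\nabla_W H$, the replacement lemma (\cite[Corollary 5.4]{v}) to pass from $\tau_x h_{i,j}(\eta_s)$ to $[\eta_s^{\varepsilon N}(x)]^2$, together with the $L^2$-convergences provided by Theorem \ref{homoge}, Proposition \ref{hconvergencia} and Corollary \ref{suave-hconv}, one obtains the integral identity
\begin{equation*}
\int_{\bb T^d}\rho(t,u)H(u)\,du - \int_{\bb T^d}\rho(0,u)H(u)\,du = \int_0^t\!\!\int_{\bb T^d}\Phi(\rho(s,u))\,\nabla A\nabla_W H(u)\,du\,ds
\end{equation*}
for every $H\in\mf D_W(\bb T^d)$ and $0\le t\le T$, $\bb Q^\ast$-a.s. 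The energy estimate from \cite[Lemma 6.2]{v}, adapted using the uniform ellipticity \eqref{unif-elip}, together with \cite[Lemma 4.1]{SV}, ensures that both $\rho$ and $\Phi(\rho)$ belong to $L^2([0,T],H_{1,W}(\bb T^d))$. Hence $\rho$ is a weak solution of \eqref{g03} with $\gamma=\rho_0$ and $\Phi(\alpha)=\alpha+b\alpha^2$.

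Finally I would invoke the uniqueness of weak solutions of \eqref{g03} proved in \cite{SV}: there is a unique such $\rho$, so $\bb Q^\ast$ must be the Dirac mass on the trajectory $\pi(t,du)=\rho(t,u)\,du$, i.e.\ $\bb Q^\ast=\bb Q_W$. Since every limit point agrees with $\bb Q_W$, the whole sequence converges to $\bb Q_W$. The passage from weak convergence on $D([0,T],\mc M)$ with the Skorohod topology to convergence in the uniform topology is automatic because $\bb Q_W$ is concentrated on continuous trajectories. The main technical obstacle is ensuring that all the replacements leading to the integral identity remain valid in our $W$-Sobolev setting with non-constant matrix $A$; but this has effectively been done already in Subsection \ref{ss2} thanks to the homogenization results of Section \ref{sec5}, so the present proposition is essentially a synthesis of those pieces.
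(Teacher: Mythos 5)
Your proposal follows essentially the same route as the paper: tightness (Lemma \ref{s06}), the identification in Subsection \ref{ss2} that every limit point is concentrated on weak solutions of \eqref{g03}, and uniqueness of weak solutions from \cite{SV} to collapse all limit points to $\bb Q_W$. One small inaccuracy in your closing remark: the paper equips $D([0,T],\mc M)$ with the uniform topology from the outset and proves tightness directly for that topology, so there is no passage from Skorohod to uniform convergence to be made; the Prohorov/subsequence argument already takes place in the uniform topology.
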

\begin{proof}
In the previous subsection, we showed that the sequence of probability
measures $\bb Q^{W,N}_{\mu_N}$ is tight for the uniform topology. Moreover, we
just proved that all limit points of this sequence are concentrated on
weak solutions of the parabolic equation \eqref{g03}. The proposition now follows
 from the uniqueness proved in \cite{SV}.
\end{proof}

\begin{proof}[Proof of Theorem \ref{t02}]
Since $\bb Q_{\mu_N}^{W,N}$ converges in the uniform topology to $\bb
Q_{W}$, a measure which is concentrated on a deterministic path, for
each $0\le t\le T$ and each continuous function $H:\bb T^d\to \bb R$,
$\<\pi^N_t, H\>$ converges in probability to $\int_{\bb T^d} du
\rho(t,u)H(u)$, where $\rho$ is the unique weak solution of
\eqref{g03} with $l_k=0$, $r_k=1$, $\gamma=\rho_0$ and $\Phi(\alpha) =
\alpha + a \alpha^2$.
\end{proof}

\section{APPENDIX} 
%\label{apendice}

In this appendix we will present, for the reader's convenience, part of the proof of the \cite[ Lemma 1]{TC}. This result was used in Proposition \ref{d1}.
\begin{proposition}\label{prova d1}
The space 
${\mf D}_W(\bb T)$ is dense in $C(\bb T),$ the space of continuous functions in $\bb T$, in the sup norm, $\|\cdot\|_\infty$.
\end{proposition}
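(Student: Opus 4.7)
The plan is to proceed by an explicit construction using the characterization \eqref{f17} of $\mf D_W(\bb T)$. Fix $h \in C(\bb T)$ and $\epsilon > 0$. Since $h$ is uniformly continuous on the compact torus, I would first choose a partition $0 = t_0 < t_1 < \cdots < t_n = 1$, including in it the (finitely many significant) jump points of $W$, such that the oscillation of $h$ on each subinterval $[t_j, t_{j+1}]$ is less than $\epsilon/4$.

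Next, I would set up a candidate approximation at the level of $W$-derivatives. Define piecewise constant slopes $G_j = (h(t_{j+1}) - h(t_j))/(W(t_{j+1}^-) - W(t_j))$ and consider the piecewise constant c\`adl\`ag function $G(y) = G_j$ for $y \in [t_j, t_{j+1})$. The naive candidate $f_0(x) = h(0) + \int_{(0,x]} G(y)\, dW(y)$ interpolates $h$ exactly at the partition points and is within $\epsilon/2$ of $h$ in sup norm on each subinterval. However, $f_0$ is not in $\mf D_W(\bb T)$ because $G$, while belonging to $C_W(\bb T)$, is not differentiable. To remedy this I would smooth $G$ to a $C^1$ periodic function $G_\delta$ (so that $g_\delta := G_\delta'$ is continuous, hence in $C_W(\bb T)$), with $G_\delta$ close to $G$ in the $L^1(dW)$ sense. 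The smoothed function
$$f(x) \;=\; a \;+\; b\, W(x) \;+\; \int_{(0,x]} dW(y) \int_0^y g_\delta(z)\, dz$$
then lies in $\mf D_W(\bb T)$ by the characterization \eqref{f17}, provided the two side conditions in \eqref{f14} are met.

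To satisfy those two conditions, I would use the freedom in $g_\delta$ (which may be adjusted by a small additive constant, then re-normalized) to enforce $\int_{\bb T} g_\delta(z)\, dz = 0$, and then choose $b$ so that $b\, W_k(1) + \int_{\bb T} dW(y) \int_0^y g_\delta(z)\, dz = 0$; finally, $a$ is chosen to match $h$ at the origin. A straightforward estimate of $\int_0^x G_\delta\, dW - \int_0^x G\, dW$ together with the interpolation property of $f_0$ then gives $\|f - h\|_\infty < \epsilon$.

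The main obstacle will be controlling the approximation near the discontinuities of $W$: at a jump point $x_k$, the constructed $f$ picks up a jump of magnitude $(W(x_k) - W(x_k^-)) \cdot \int_0^{x_k} g_\delta(z)\, dz$, which must be made small since $h$ is continuous there. I would address this by including each such $x_k$ in the partition and designing the smoothing so that the primitive of $g_\delta$ approximately vanishes at these points; this is compatible with the two normalization conditions since there are only finitely many relevant jumps and we have ample freedom in choosing $g_\delta$ within a suitable finite-codimensional subspace of $C^\infty(\bb T)$.
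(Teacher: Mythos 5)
Your overall strategy -- build a piecewise-constant discrete $W$-derivative, integrate it against $dW$ to get an interpolant, then smooth and appeal to the characterization \eqref{f17} -- is exactly the paper's. But a choice you make at the outset creates a genuine gap: you deliberately include the jump points of $W$ among the partition points $\{t_j\}$, and combined with your conventions (support $[t_j,t_{j+1})$ for the step function, left limit $W(t_{j+1}^-)$ in the denominator of $G_j$) the telescoping fails. The Stieltjes integral over the half-open interval picks up the atom at $t_{j+1}$ with the \emph{next} slope:
$$\int_{(t_j,t_{j+1}]} G\,dW \;=\; G_j\big[W(t_{j+1}^-)-W(t_j)\big] \;+\; G_{j+1}\big[W(t_{j+1})-W(t_{j+1}^-)\big],$$
so that inductively $f_0(t_k) = h(t_k) + \sum_{j=1}^k G_j\,\big(W(t_j)-W(t_j^-)\big)$. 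The claim ``interpolates $h$ exactly at the partition points'' is therefore false whenever some $t_j$ is a jump of $W$, and the error is not obviously small: $G_j$ has the oscillation of $h$ (small) in its numerator but $W(t_{j+1}^-)-W(t_j)$ in its denominator, which you do not control. The paper does the opposite: since $W_k$ is monotone it has only countably many discontinuities, so the equally spaced points $\{j/n\}$ can be moved to continuity points of $W_k$; with the step function supported on $(j/n,(j+1)/n]$ and slope $\big(f((j{+}1)/n)-f(j/n)\big)/\big(W_k((j{+}1)/n)-W_k(j/n)\big)$, the telescoping is exact and $\|G-f\|_\infty\le 2\epsilon$ follows at once. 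The same choice of continuity points is what later justifies the dominated convergence step $\int_{\bb T}|H_m-g|\,dW_k\to 0$, since $H_m\to g$ pointwise only off the partition, and that exceptional set must have $dW_k$-measure zero.

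Your treatment of the jumps of the final approximant is also off. The jump of $f(x)=a+bW(x)+\int_{(0,x]}dW\int_0^y g_\delta$ at a discontinuity $x_k$ of $W$ is $\big(W(x_k)-W(x_k^-)\big)\big[b+\int_0^{x_k}g_\delta(z)\,dz\big]$; your formula omits the $b$-term, so the condition ``primitive of $g_\delta$ approximately vanishes at $x_k$'' does not by itself make the jump small. More fundamentally, no such side condition is needed: the paper's estimate $\|F_m-G\|_\infty\le\int_{\bb T}|H_m-g|\,dW_k\to0$, together with $\|G-f\|_\infty\le 2\epsilon$ and the continuity of $f$, already forces the jumps of $F_m$ to be uniformly small. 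Trying instead to impose finitely many extra linear constraints on $g_\delta$ while preserving the two normalizations of \eqref{f14} and maintaining $L^1(dW)$-closeness to $G$ is additional work your proposal does not carry out, and it is not the mechanism the paper uses.
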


\begin{proof}
Let $f:\bb T\to \bb R$ be a continuous function, and $\epsilon
>0$. From uniform continuity, there exists $\delta >0$ such that $|f(y)-f(x)|\le \epsilon$ 
whenever $|x-y|\le \delta$. 
Choose an integer $n \ge \delta^{-1}$ and consider
the function $g:\bb T \to \bb R$ defined by
\begin{equation*}
g(x) \;=\; \sum_{j=0}^{n-1}\frac {f([j+1]/n) - f(j/N)}{W_k([j+1]/n) -
  W_k(j/N)} \mb 1\{(j/n, (j+1)/n]\}(x)\; ,
\end{equation*}
with $\mb 1\{A\}$ being the indicator function of the set $A$. $g$ can be seen 
as a discrete derivative of $f$ with respect to $W_k$. Thus, it is natural
to guess that integrating this function with respect to $W_k$ would yield
an approximation of $f$. Thus, let $G:
\bb T \to \bb R$ be given by $G(x) = f(0) + \int_{(0,x]} g(y)
W_k(dy)$. By the very definition of $g$, $G(j/n) = f(j/n)$ for $0\le j <
n$. 

Since $n\ge \delta^{-1}$, we can use the definition of $G$ to obtain, for $j/n \le
x \le (j+1)/n$,
\begin{equation*}
\big| G(x) - f(x) \big| \;\le\; \big| G(x) - G(j/n) \big| \;+\;
\big| f(x) - f(j/n) \big| \;\le\; 2 \epsilon\;,
\end{equation*}
whence $\Vert G - f \Vert_\infty \le 2 \epsilon$. Note that
\begin{equation}
\label{f15}
\int_{(0,1]} g\, dW_k \;=\;0\;.
\end{equation}

It remains to be shown that the function $G$ may be approximated in
the sup norm by functions in $\mf D_W(\bb T^d)$. Note that the only restriction
we had when choosing the set $\{0, 1/n, \ldots, (n-1)/n\}$ is that the
distance between two consecutive points is less than $\delta$. Therefore,
we may replace these $n$ points, by any such points satisfying this restriction.
Since $W_k$ is strictly increasing, it has a countable number of discontinuities,
and then, we may assume, without loss of generality, that $W_k$ is continuous at
the points $\{0,1/n,\ldots,(n-1)/n\}$ (in the sense, that we may replace these points
if it is not the case). Let $\{H_m : m\ge 1\}$ be a sequence of smooth
functions $H_m : \bb T \to \bb R$, with $|H_m(x)|\leq \Vert
g\Vert_\infty$, for all $x\in \bb T$, and such that $\lim_m H_m(x) = g(x)$ for $xn \not\in
\bb Z$. Then, the Dominated Convergence Theorem implies
\begin{equation}
\label{f16}
\lim_{m\to\infty} \int_{\bb T} \big\vert H_m(y) - g(y) \big|\,
dW_k(y)\;=\; 0\;.
\end{equation}

Let $\{F_m : m\ge 1\}$ be the sequence of functions $F_m:\bb T\to\bb
R$ defined by
$$F_m(x) = f(0) + \int_{(0,x]} H(y) dW_k(y).$$

Therefore, it is immediate that
$$\|G-F_m\|_\infty \le \int_{\bb T} |H_m(y)-g(y)| dW_k(y) \stackrel{m\to\infty}{\longrightarrow} 0.$$

This shows that we can approximate, in the sup norm, any function $f\in C(\bb T)$ by functions of the form of $F_m$. To conclude that 
$\mf D_{W_k}$ is dense in $C(\bb T)$, we must show that $F_m\in \mf D_{W_k}$. To this end, note that
\begin{eqnarray*}
F_m(x)&=& f(0) \;+\; \int_{(0,x]} \Big \{ b_m + \int_0^y H'_m(z) \,
dz \Big \} dW_k(y) \\
&=& f(0) \;+\; b_m \, W_k(x) \;+\;
\int_{(0,x]} dW_k(y) \int_0^y H'_m(z) \, dz \;,
\end{eqnarray*}
where $b_m = H_m(0) - W_k(1)^{-1} \int_{\bb T} H_m(y) \, dW_k(y)$. Since $H'_m$ is continuous, $H'_m\in C_W(\bb T)$. 
Then, using the characterization of $\mf D_{W_k}$, one may conclude that $F_m$ belongs to $\mf D_{W_k}$, for each $m\ge 1$.
\end{proof}


\begin{thebibliography}{99}

\bibitem{f} A. \ Faggionato, {\em Random walks and exclusion processs among random
conductances on random infinite clusters: Homogenization and hydrodynamic limit}.arXiv:0704.3020v3 .

\bibitem {FSV} Farfan, Jonathan ; Simas, Alexandre B. ; Valentim, Fábio J. {\em Equilibrium fluctuations for exclusion processes with conductances in random environments}. Stochastic Processes and their Applications, v. 120, p. 1535-1562, 2010

\bibitem{f1} W. \ Feller, {\em On Second Order Differential Operators}. Annals of Mathematics, 61,n.1, 90-105, (1955).

\bibitem{f2} W. Feller, {\em Generalized second order differential operators and their lateral conditions}. Illinois J. Math. Vol. 1, Issue 4, 459-504, (1957).


\bibitem{TC} T.\ Franco, C.\ Landim, { \em  Hydrodynamic limit of gradient exclusion processes with conductances}. Archive for Rational Mechanics and Analysis (Print), v. 195, p. 409-439, 2009.


\bibitem{kl} C.\ Kipnis, C.\ Landim, {\em Scaling limits of interacting
  particle systems}. Grundlehren der Mathematischen Wissenschaften
  [Fundamental Principles of Mathematical Sciences], 320.
  Springer-Verlag, Berlin, 1999.

\bibitem{jl} M.\ Jara, C.\ Landim, {\em
Quenched nonequilibrium central limit theorem for a tagged particle
in the exclusion process with bond disorder}. arXiv: math/0603653. Ann. Inst. H. Poincaré,
Probab. Stat. 44, 341-361, (2008).

\bibitem{kozlov} S.M. Kozlov, {\em Averaging of difference schemes}. Math. USSR Sbornik, 57, 351?369, 1987.
\bibitem{L} O. A. Ladyzhenskaya. {\em The Boundary Value Problems of Mathematical Physics}. Springer, NY, 1985.

\bibitem{m} P. Mandl. {\em Analytical treatment of one-dimensional
  {M}arkov processes}, Grundlehren der mathematischen
Wissenschaften, 151.  Springer-Verlag, Berlin, 1968.

\bibitem{pr} A. Piatnitski, E. Remy, {\em Homogenization of Elliptic Difference Operators}, SIAM J. Math. Anal. Vol.33, pp. 53-83, (2001).

\bibitem{SV} A.B. Simas, F.J. Valentim, {\em $W$-Sobolev spaces}. Journal of Mathematical Analysis and Applications  V. 382, 1, 214-230, 2011.

\bibitem{SV3} A.B.\  Simas, F.J. Valentim, {\em $W$-Sobolev spaces: Higher order and regularity}. Communications on Pure and Applied Analysis. V.14, p. 597-607, 2015.


\bibitem{v} F.J. Valentim, {\em Hydrodynamic limit of a $d$-dimensional exclusion process with conductances.}.Ann. Inst. H. Poincaré Probab. Statist. V. 48, 1, 188-211, 2012.
\end{thebibliography}
\end{document}